\crefname{hypothesis}{Hypothesis}{Hypotheses}
\title{Divergence-free decoupled finite element methods for incompressible flow problems\thanks{\funding{Xu Li was supported by NSFC (No. 12501511). Christian Merdon gratefully acknowledges the funding by
the German Science Foundation (DFG) within the project “ME 4819/2-1”.
}}}
\author{ Volker John \thanks{Weierstrass Institute for Applied Analysis and Stochastics (WIAS), Anton-Wilhelm-Amo Str.
39, 10117 Berlin, Germany and Freie Universit\"at Berlin, Department of Mathematics and
Computer Science, Arnimallee 6, 14195 Berlin, Germany (\email{john@wias-berlin.de}).}
        \and Xu Li \thanks{School of Mathematics, Shandong University, Jinan 250100, China (\email{xulisdu@sdu.edu.cn}).}
        \and Christian Merdon \thanks{ Weierstrass Institute for Applied Analysis and Stochastics (WIAS), Anton-Wilhelm-Amo Str.
39, 10117 Berlin, Germany (\email{merdon@wias-berlin.de}).}
}
\definecolor{darkblue}{rgb}{0,0,.7}
\newcommand{\vecb}[1]{\boldsymbol{#1}}
\newcommand{\RTzerozero}{\widecheck{\boldsymbol{RT}}}
\renewcommand{\(}{\left(}
\renewcommand{\)}{\right)}
\newcommand{\jump}[1]{ [\![ {#1} ]\!]}
\newcommand{\ds}{\, \mathrm{d}\boldsymbol s}
\begin{document}
	\maketitle
	\numberwithin{equation}{section}
	
	\begin{abstract}
    Incompressible flows are modeled by a coupled system of partial differential equations for
    velocity and pressure, Starting from a divergence-free mixed method proposed in 
    [John, Li, Merdon and Rui, Math. Models Methods Appl. Sci. 34(05):919--949, 2024], 
    this paper proposes $\vecb{H}(\mathrm{div})$-conforming finite element methods
    which decouple the velocity and pressure by constructing
    divergence-free basis functions. Algorithmic issues like the computation of this basis 
    and the imposition of non-homogeneous Dirichlet boundary conditions are discussed. 
    Numerical studies at two- and three-dimensional Stokes problems compare the efficiency of the 
    proposed methods with methods from the above mentioned paper. 
\end{abstract}

\begin{keywords}
    incompressible Stokes problem, decoupled methods, $\vecb{H}(\mathrm{div})$-conforming finite element methods, diver\-gen\-ce-free basis
\end{keywords}
   \begin{AMS}
        65N12, 65N30, 76D07
    \end{AMS}

\section{Introduction}

This paper is concerned with decoupled finite element methods for incompressible flow problems in
a polyhedral domain $\Omega\subset \mathbb{R}^d$, $d \in \{2,3\}$, with Lipschitz boundary.
For the sake of concentrating on the main goal of this paper, the Stokes equations are considered 
\begin{equation}\label{eq:generaleq}
    \begin{array}{rcll}
        -\nu \Delta \vecb{u} +\nabla p & = & \vecb{f} & \mbox{in } \Omega,\\
        \mathrm{div} (\vecb{u}) & = & 0 &  \mbox{in } \Omega,\\
        \vecb{u} & = & \vecb{0} & \mbox{on }\partial \Omega,
    \end{array}
\end{equation}
where the unknowns are the velocity $\vecb{u}$ and the pressure $p$, 
$\nu$ denotes the constant
viscosity, and $\vecb{f}$ is the unit external body force.

The objective of this paper is to investigate a decoupled version of the 
Raviart--Thomas enriched Scott--Vogelius element method proposed in 
\cite{JLMR2022}. Scott--Vogelius elements are a class of well known
and simple divergence-free elements (i.e., the discrete velocity solution 
is weakly divergence-free, which means that the $L^2(\Omega)$ norm of its divergence vanishes, cf. \cite{SV1985,Arnold1992,Zhang:2005}), 
whose structure is exactly 
$\vecb{P}_k\times \vecb{P}_{k-1}^\mathrm{disc}$, that is, the velocity 
space are classical $k$-th order Lagrange elements, while the pressure 
space consists of
discontinuous piecewise polynomials of degree no more than $k-1$. 
As a class of divergence-free methods, it has many fascinating properties
such as pressure-robustness (velocity error estimates are independent of the 
pressure, cf. \cite{JLMNR:2017}) and convection-robustness in the case of the  Navier--Stokes equations 
(the constants in velocity error estimates do not 
depend on the Reynolds number, cf. \cite{Schroeder_2018,GJN21,ABBGLM2021}). 
However, the inf-sup stability of the classical Scott--Vogelius has  severe
requirements on the mesh and the polynomial degree (e.g., see \cite{Joh16}) especially
in three dimensions (3D). This drawback was overcome in \cite{JLMR2022} by enriching 
the velocity space with some suitably chosen Raviart--Thomas bubbles.

It is well known that mixed methods for the Stokes equations result in a 
(non-positive definite) saddle point problem, 
while the decoupling of velocity and pressure only 
requires to solve several symmetric positive definite systems. Due to this 
reason, there is extensive literature studying decoupled methods such 
as the projection methods \cite{Shen1992,GMS2006,GS2003SINUM}, 
consistent-splitting methods \cite{GS2003_jcp,LLP2007,HS2023}, and 
artificial compressibility methods \cite{DLM2017,GP2015,GP2019}. 
These methods are efficient and the structure of their finite element spaces
is usually very standard. However, these methods usually bypass the discretization of 
the continuity equation and thus they do not conserve mass exactly, i.e., the divergence of the computed velocity approximation 
usually does not vanish. 
Moreover, the stability analysis of these methods is much more technical than those of mixed methods and 
in some cases it is still an open problem. Another strategy consists in constructing divergence-free basis
functions directly such as in \cite{Wang2009} 
(see also the discretely divergence-free basis methods \cite{YeHall1997,MU2018}). However, the 3D basis functions in 
\cite{Wang2009} are not linearly independent. In addition, to the best of the 
authors' knowledge, a robust strategy to deal with non-homogeneous
boundary conditions for these classes of methods in 3D is an open problem.
In \cite{FU2019}, by reformulating the decoupled problem as 
a mixed Darcy system, the explicit construction of 
divergence-free basis functions is avoided, 
thereby circumventing the aforementioned difficulties. 
However, this approach requires the viscous and convective terms to be treated explicitly, 
and, unless hybrid-mixed elements are employed together with static condensation, 
one still needs to solve a saddle point problem.

On the first glance it seems to be very attractive to solve a problem for the velocity 
only, instead of a saddle point problem for velocity and pressure, because the saddle point 
character is removed and the number of degrees of freedom is reduced. This was our main motivation for 
developing divergence-free decoupled methods.
The point of departure is the class of methods from \cite{JLMR2022}. 
First, a novel symmetric variant of these methods is proposed. Then, to construct a method with 
a divergence-free basis, a slight modification 
of the enrichment space is introduced. An appropriate decomposition 
of the space of discretely divergence-free functions is derived, which uses for the 3D situation a 
result and algorithm from \cite{Robert2002}. The functions from this space are even weakly divergence-free. This 
decomposition offers a straightforward approach to define and implement weakly divergence-free basis functions.
The imposition of non-homogeneous Dirichlet boundary conditions requires additional efforts. Depending on the 
concrete problem, different approaches can be pursued. In particular, a method is presented that works 
for arbitrary domains in two and three dimensions. The computation of a discrete approximation of the 
pressure is also discussed. 
Finally, the accuracy and efficiency of the proposed decoupled methods are studied by means of numerical 
comparisons with several methods proposed in \cite{JLMR2022}.
These methods differ by the type of system (decoupled vs. saddle point), the sparsity (number of non-zero entries) and sparsity patterns
of the arising matrices, and the overhead that is needed to compute the final solution. 

This paper is organized as follows. In Section~\ref{sec:method_ori} we briefly introduce the 
Raviart--Thomas enriched Scott--Vogelius element method from \cite{JLMR2022}
and a symmetric variant. The decoupled
finite element method is proposed in Section~\ref{sec:scheme}. Two strategies to deal with non-homogeneous
boundary conditions are discussed in Section~\ref{sec:nonhomo_BC}, and a pressure reconstruction procedure, which is inspired by 
\cite{GS2003_jcp,LLP2007}, is given in Section~\ref{sec:pressure}. The numerical studies are presented in Section~\ref{sec:experiments}.

\section{Notation and divergence-free mixed method from \cite{JLMR2022}}
\label{sec:method_ori}

Standard notation for Lebesgue and Sobolev spaces is used. Vector-valued function spaces and 
functions are written with bold face symbols. 
The symbol $(\cdot,\cdot)$ denotes 
the inner product in $L^2(\Omega)$ or $\vecb{L}^2(\Omega)$.

The methods proposed in this paper are inspired from the methods developed in \cite{JLMR2022}.
This section provides a brief description of the latter methods, it proposes a novel symmetric variant of
these methods, and it introduces the finite element 
spaces used in this paper. 

Denote $\vecb{V}:=\vecb{H}_0^1(\Omega)$, $Q:=L_0^2(\Omega)$, and let $\vecb{f} \in \vecb{L}^2(\Omega)$.  
A variational formulation of the Stokes equations
with homogeneous velocity boundary conditions reads: Find $(\vecb{u},p)\in \vecb{V}\times Q$ such that
\begin{equation*}
    \begin{array}{rcll}
        (\nu \nabla \vecb{u},\nabla \vecb{v}) - (\mathrm{div} (\vecb{v}),p) & = & (\vecb{f},\vecb{v}) & \text{for all } \vecb{v}\in \vecb{V}, \\
        (\mathrm{div} (\vecb{u}),q)                                         & = & 0                   & \text{for all } q\in Q.
    \end{array}
\end{equation*}

Let $\{\mathcal{T}_h\}$ be a family of admissible and  shape-regular partitions of $\Omega$. 
Let $\mathcal T \in \{\mathcal{T}_h\}$. A mesh cell of $\mathcal T$ is denoted with
$T$ and the set of 
facets is denoted with $\mathcal{F}$. 
The symbols  $h_T$ and $h_F$ are used to denote the diameter of $T\in \mathcal{T}$
and $F\in\mathcal{F}$,
respectively. Let $h_\mathcal{T}$ be a piecewise constant function whose restriction on $T$ equals $h_T$ and set $h:=\max_{T\in\mathcal T}{h_T}$.

The space of polynomials of degree $k$, $k\ge 0$, is denoted by 
$P_k(T)$.
Following the notation in \cite{JLMR2022}, the following finite element spaces are defined:
\begin{align*}
    {P}_k(\mathcal{T})                             & :=\left\{q_h\in H^1(\Omega)\ :\ q_h|_T\in P_k(T) \text{ for all } T\in \mathcal{T}\right\} \quad \text{for } k > 0,                                                                                              \\
    {{P}}_k^{\mathrm{disc}}(\mathcal{T})           & :=\left\{q_h\in L^2(\Omega)\ :\ q_h|_T\in P_k(T) \text{ for all } T\in \mathcal{T}\right\},                                                                                               \\
    \widetilde{P}_{k}^{\mathrm{disc}}(\mathcal{T}) & :=\left\{q_h\in {P}_{k}^{\mathrm{disc}}(\mathcal{T})\ :\ (q_h, 1)_T=0 \text{ for all } T\in\mathcal{T}\right\},                                                                           \\
    \vecb{RT}_{k}(T)                               & := \left\lbrace \vecb{v} \in \vecb{L}^2 (T) 
    \ :\ \exists \vecb{p} \in \vecb{P}_k(T), q \in {P}_k(T), \ \vecb{v}|_T(\vecb{x}) = \vecb{p}(\vecb{x}) + q(\vecb{x}) \vecb{x} \right\rbrace, \\
    \vecb{RT}_{k}(\mathcal{T})                     & := \left\lbrace \vecb{v} \in \vecb{H}(\mathrm{div},\Omega)\ :\ \forall T \in \mathcal{T} \, \vecb{v}|_T \in \vecb{RT}_{k}(T) \right\rbrace,
\end{align*}
where $\vecb{H}(\mathrm{div},\Omega)$ is the space of functions from $\vecb{L}^2(\Omega)$ whose divergence is contained in $L^2(\Omega)$. 
The subspace of interior Raviart--Thomas cell functions reads
\begin{align*}
    \vecb{RT}_k^{\mathrm{int}}(\mathcal{T}):=\left\lbrace\vecb{v}\in\vecb{RT}_k(\mathcal{T})\ :\ \vecb{v}\cdot\vecb{n}_T|_{\partial T}=0 \text{ for all } T\in\mathcal{T}\right\rbrace.
\end{align*}
This space can be decomposed into
\begin{align*}
    \vecb{RT}_k^{\mathrm{int}}(\mathcal{T})=\vecb{RT}_{k,0}^{\mathrm{int}}(\mathcal{T})\oplus \widetilde{\vecb{RT}}_k^{\mathrm{int}}(\mathcal{T}),
\end{align*}
where $\vecb{RT}_{k,0}^{\mathrm{int}}(\mathcal{T})$ is the subspace of divergence-free Raviart--Thomas cell functions and
$\widetilde{\vecb{RT}}_k^{\mathrm{int}}(\mathcal{T})$ is an arbitrary complement space to the former one. We
require that all local spaces $\widetilde{\vecb{RT}}_k^{\mathrm{int}}(T)$ of
$\widetilde{\vecb{RT}}_k^{\mathrm{int}}(\mathcal{T})$ come from the same reference space by Piola's transformation, e.g., see \cite[\S~2.1]{BBF:book:2013}.

The operator
\[
\mathrm{div}|_{\widetilde{\vecb{RT}}_k^{\mathrm{int}}(\mathcal{T})}\ : \
    \widetilde{\vecb{RT}}_k^{\mathrm{int}}(\mathcal{T})\rightarrow
    \widetilde{P}_{k}^{\mathrm{disc}}(\mathcal{T})
\]
is a bijective operator, see \cite{JLMR2022} for an explanation of this property.
Consequently there exists an inverse operator
\begin{equation}\label{eq:mathcalR}
\mathcal{R}_k\ : \ \widetilde{P}_{k}^{\mathrm{disc}}(\mathcal{T})
    \rightarrow
    \widetilde{\vecb{RT}}_k^{\mathrm{int}}(\mathcal{T})
\end{equation}
of $\mathrm{div}|_{\widetilde{\vecb{RT}}_k^{\mathrm{int}}(\mathcal{T})}$ such that
for any $q_h\in \widetilde{P}_{k}^{\mathrm{disc}}(\mathcal{T})$,
it holds $\mathrm{div}(\mathcal{R}_kq_h)=q_h$. We extend the definition of
$\mathcal{R}_{k}$ to $Q$ by $\mathcal{R}_kq:=
   \mathcal{R}_k( \pi_{\widetilde{P}_{k}^{\mathrm{disc}}(\mathcal{T})}q)$ 
   such that $\mathrm{div}(\mathcal{R}_kq)=\pi_{\widetilde{P}_{k}^{\mathrm{disc}}(\mathcal{T})}q$ 
   for all $q\in Q$,
where
$\pi_{\mathcal{S}}$ is the $L^2(\Omega)$ projection to a space $\mathcal{S}$.

Next, we recall the method developed and analyzed in \cite{JLMR2022}. For this purpose, define
\begin{align*}
    \vecb{V}_h^\mathrm{ct}:=\vecb{P}_{k}(\mathcal{T})\cap \vecb{V},\quad
    Q_h:=P_{k-1}^\mathrm{disc}(\mathcal{T})\cap Q.
\end{align*}
The pair of spaces $\vecb{V}_h^\mathrm{ct}\times Q_h$ is usually referred to as the Scott--Vogelius
pair \cite{SV:1983}. It is well known that in general Scott--Vogelius pairs do not satisfy a discrete 
inf-sup condition. 
A popular way to achieve an inf-sup stable discretization consists in enriching $\vecb{V}_h^\mathrm{ct}$ with some
suitable bubble functions, such as in the Bernardi--Raugel method \cite{BR1985} for
$k=1$. Instead of using $\vecb{H}^1$-conforming functions, it is advocated in \cite{JLMR2022} to utilize
$\vecb{H}(\mathrm{div})$-conforming Raviart--Thomas functions for enriching 
$\vecb{V}_h^\mathrm{ct}$. This approach does not require any integrals on facets and it leads in 
fact to a simple to implement method where the discrete velocity solution is weakly
divergence-free. 
The fundamental principle for the choice
of the enrichment space $\vecb{V}_h^\mathrm{R}$ is summarized in the following.

For arbitrary $k$, there must be a subspace $\widehat{Q}_h\subseteq Q_h$ so that
$\vecb{V}_h^\mathrm{ct}\times \widehat{Q}_h$ is inf-sup stable, e.g., 
$\widehat{Q}_h=\{0\}$ is always a trivial choice for all $k$.
For higher $k$, $k\geq d$, some classical inf-sup stable pairs on general shape-regular grids imply that $\vecb{V}_h^\mathrm{ct}\times (P_{k-d}^\mathrm{disc}(\mathcal{T})\cap Q)$
is inf-sup stable, cf. \cite[pp.~132--144]{GiraultRaviart:1986}.
Hence any $\widehat{Q}_h\subseteq (P_{k-d}^\mathrm{disc}(\mathcal{T})\cap Q)$
is an admissible choice in this situation. In some special cases $\widehat{Q}_h$ can be even equal to $Q_h$, 
like in the case of
Scott--Vogelius pairs on barycentrically refined grids for $k\geq d$, \cite{Arnold1992,Zhang:2005}.
Define $Q_h^0:=P_0^\mathrm{disc}(\mathcal{T})\cap Q$. It is required in \cite{JLMR2022} that
\begin{equation}\label{eq:qh_hat_q0}
    Q_h^0 \subseteq \widehat{Q}_h \quad \text { for } k\geq d.
\end{equation}
The finite element pressure space $Q_h$ is split by $L^2(\Omega)$-orthogonality as
\begin{align*}
    Q_h=\widehat{Q}_h\oplus_{L^2} \widehat{Q}_h^\perp.
\end{align*}
By \eqref{eq:qh_hat_q0}, one has $\widehat{Q}_h^\perp\subseteq \widetilde{P}_{k-1}^{\mathrm{disc}}(\mathcal{T})$ 
for $k\geq d$.
Then, the principle for choosing $\vecb{V}_h^\mathrm{R}$ consists in finding $\vecb{V}_h^\mathrm{R}\subseteq \vecb{RT}_0(\mathcal{T})\oplus\widetilde{\vecb{RT}}_{k-1}^{\mathrm{int}}(\mathcal{T})$
such that
either
\[
    \mathrm{div} : \vecb{V}_h^\mathrm{R} \rightarrow \widehat{Q}_h^{\perp}
    \quad \text{is bijective}\text{ for } k\geq d,
\]
or
\[
    \mathrm{div} : \vecb{V}_h^\mathrm{R} \rightarrow \widehat{Q}_h^{\perp}
    \quad \text{is surjective} \text{ for } k<d.
\]
For more details, including some explicit constructions of  $\vecb{V}_h^\mathrm{R}$,
the reader is referred to \cite{JLMR2022}.

Define the continuous space $\vecb{V}(h) :=\vecb{V}\times \vecb{V}_h^\mathrm{R}$, the 
finite element space $\vecb{V}_h := \vecb{V}_h^\mathrm{ct}\times \vecb{V}_h^\mathrm{R}$, and 
the bilinear forms
\[
\begin{array}{rcllrcl}
a &: & \vecb{V}\times\vecb{V}\to \mathbb{R} \quad &\mbox{ by }\quad 
&a\(\vecb{u}^\mathrm{ct},\vecb{v}^\mathrm{ct}\)&:=&\(\nabla\vecb{u}^\mathrm{ct},\nabla\vecb{v}^\mathrm{ct}\),\\[0.5ex]
b &:& \vecb{V}(h)\times Q\to \mathbb{R} \quad &\mbox{ by }\quad 
&b\(\vecb{v},q\)&:=&-\(\mathrm{div}\(\vecb{v}^\mathrm{ct}+\vecb{v}^\mathrm{R}\),q\),
\end{array}
\]
for all $\vecb{u}=(\vecb{u}^\mathrm{ct},\vecb{u}^\mathrm{R}),
    \vecb{v}=(\vecb{v}^\mathrm{ct},\vecb{v}^\mathrm{R})\in \vecb{V}(h)$, and
$q\in Q$.
With an abuse of notation, for any $(\vecb{v},q)\in \vecb{H}(\mathrm{div},\Omega)\times Q$,
we define $b(\vecb{v},q):=-(\mathrm{div}(\vecb{v}),q)$.

The finite element method proposed in \cite{JLMR2022} reads as follows: Find $(\vecb{u}_h,p_h) \in \vecb{V}_{h}\times Q_h$ such that
\begin{equation}\label{eq:fullscheme}
    \begin{aligned}
        \nu a_h(\vecb{u}_h, \vecb{v}_h) + b(\vecb{v}_h,p_h)
         & = \left(\vecb{f}, \vecb{v}_h^\mathrm{ct}+\vecb{v}_h^\mathrm{R}\right)
         &                                                            & \text{for all } \vecb{v}_h=\left(\vecb{v}_h^\mathrm{ct},\vecb{v}_h^\mathrm{R}\right) \in \vecb{V}_{h}, \\
        b(\vecb{u}_h, q_h)
         & = 0
         &                                                            & \text{for all } q_h \in Q_{h},
    \end{aligned}
\end{equation}
with
\begin{equation}\label{eqn:def_ah}
    a_h\left(\vecb{u}_h,\vecb{v}_h\right)
    := a\left(\vecb{u}_h^{\mathrm{ct}}, \vecb{v}_h^{\mathrm{ct}}\right)
    - \left(\Delta_\text{pw} \vecb{u}_h^{\mathrm{ct}}, \vecb{v}_h^{\mathrm{R}}\right)
    + \delta \left(\Delta_\text{pw} \vecb{v}_h^{\mathrm{ct}}, \vecb{u}_h^{\mathrm{R}}\right)
    + a_h^\mathrm{S}\left(\vecb{u}_h^\mathrm{R}, \vecb{v}_h^\mathrm{R}\right),
\end{equation}
where $\Delta_\text{pw}$ is the piecewise Laplacian and $\delta=\pm 1$ selects the skew-symmetric
variant of \cite{JLMR2022} (for $\delta = 1$) or the new symmetric variant (for $\delta = -1$).
The stabilizing term is of the form
\begin{equation}\label{eqn:RTk_stabilization}
    a_h^\mathrm{S}\left(\vecb{u}_h^\mathrm{R}, \vecb{v}_h^\mathrm{R}\right):=
    a_h^\mathrm{D}\left(\vecb{u}_h^\mathrm{RT_0}, \vecb{v}_h^\mathrm{RT_0}\right)+ 
    \frac{(1-\delta)\alpha}{2}\left(\mathrm{div}\left(\widetilde{\vecb{u}}_h^\mathrm{R}\right),
    \mathrm{div}\left(\widetilde{\vecb{v}}_h^\mathrm{R}\right)\right).
\end{equation}
Here, $\vecb{v}_h^{\mathrm{RT}_0}$
denotes the lowest order Raviart--Thomas part of $\vecb{v}_h^\mathrm{R}\in \vecb{V}_h^\mathrm{R}$
that only appears and needs to be stabilized in the case $k < d$,
while $\widetilde{\vecb{v}}_h^\mathrm{R}:=\vecb{v}_h^\mathrm{R}-\vecb{v}_h^\mathrm{RT_0}$
denotes the higher-order Raviart--Thomas enrichment of some
$\vecb{v}_h^\mathrm{R}\in \vecb{V}_h^\mathrm{R}$
that only needs to be stabilized in the symmetric case $\delta = -1$.
For the stabilization
$a_h^\mathrm{D}(\cdot,\cdot)$ three spectrally equivalent choices
were suggested in \cite{JLMR2022,li2021low}. One of them, which is used in the numerical studies,  is
\begin{equation}\label{eqn:RT0_stabilization}
    a_h^\mathrm{D}\left(\vecb{u}_h^{\mathrm{RT}_0}, \vecb{v}_h^{\mathrm{RT}_0}\right)
    := \alpha_0 \sum_{F\in\mathcal{F}^0}\mathrm{dof}_F\left(\vecb{u}_h^{\mathrm{RT}_0}\right) \mathrm{dof}_F\left(\vecb{v}_h^{\mathrm{RT}_0}\right) \, \left(\mathrm{div} \vecb{\psi}_F, \mathrm{div} \vecb{\psi}_F\right),
\end{equation}
where $\mathrm{dof}_F: \vecb{H}^1(\Omega)+\vecb{RT}_{k-1}(\mathcal{T})\rightarrow \mathbb{R}$ evaluates the normal flux of the argument on the face $F$
and
the functions $\{\vecb{\psi}_F\in \vecb{RT}_0(\mathcal{T}):
    F\in \mathcal{F}\}$ form the standard basis
of $\vecb{RT}_{0}(\mathcal{T})$ such that
\begin{equation*}
    \vecb{v}^{\mathrm{RT}_0}_h = \sum_{F\in\mathcal{F}}\mathrm{dof}_F(\vecb{v}^{\mathrm{RT}_0}_h)\vecb{\psi}_F
    = \sum_{F\in\mathcal{F}}\mathrm{dof}_F(\vecb{v}^\mathrm{R}_h)\vecb{\psi}_F
    \quad \forall \vecb{v}^\mathrm{R}_h \in \vecb{V}_h^{\mathrm{R}},
\end{equation*}
and the factors $\alpha$ and $\alpha_0$ are parameters that allow to scale the
stabilization and in \cite{JLM2024} it was found that $\alpha_0 \approx 1$
is a good choice.

If $\delta=1$, then $a_h(\cdot,\cdot)$ coincides with the bilinear form from the 
methods studied in \cite{JLMR2022}. If $\delta=-1$, then $a_h(\cdot,\cdot)$ is symmetric and
positive definite whenever $\alpha$ and $\alpha_0$ are sufficiently large. 
The newly introduced term is a stabilization of the 
$\widetilde{\vecb{RT}}_{k-1}^\mathrm{int}(\mathcal{T})$ part that is of magnitude
$(h_\mathcal{T}^{-2}\widetilde{\vecb{u}}_h^\mathrm{R},
    \widetilde{\vecb{v}}_h^\mathrm{R})$. This statement can be proved using the 
inequality 
\begin{equation*}
        \left\|\vecb{v}_h\right\|_{L^2(T)}\leq C h_{T} \left\|\mathrm{div}\left(\vecb{v}_h\right)\right\|_{L^2(T)}\quad \text{for all } T\in \mathcal{T}, 
\end{equation*}
which is valid for all $\vecb{v}_h \in \widetilde{\vecb{RT}}_{k-1}^{\mathrm{int}}(\mathcal{T})$, 
see \cite[Lemma~5.2]{JLMR2022}.
Method \eqref{eq:fullscheme} will be called the full scheme.

Next, define
\begin{displaymath}
    \widehat{\vecb{V}}_{h} := \left\{\vecb{v}_{h}=\left(\vecb{v}_h^{\mathrm{ct}},\vecb{v}_h^{\mathrm{R}}\right)\in
    \vecb{V}_{h}\ :\  \mathrm{div}\left(\vecb{v}_h^{\mathrm{ct}}
    +\vecb{v}_h^{\mathrm{R}}\right)\in \widehat{Q}_h+Q_h^0\right\},
\end{displaymath}
and
\begin{align}\label{eqn:def_VhO}
    \vecb{V}_{h,0} := & \left\lbrace \vecb{v}_h = \left(\vecb{v}_h^{\mathrm{ct}}, \vecb{v}_h^{\mathrm{R}}\right) \in \vecb{V}_h\ :\ b(\vecb{v}_h,q_h) = 0
    \text { for all } q_h\in Q_h \right\rbrace                                                                                                                                              \\
    =                 & \left\lbrace \vecb{v}_h = \left(\vecb{v}_h^{\mathrm{ct}}, \vecb{v}_h^{\mathrm{R}}\right) \in \vecb{V}_h\ :\ \mathrm{div}\left(\vecb{v}_h^{\mathrm{ct}} + \vecb{v}_h^{\mathrm{R}}\right) = 0
    \right\rbrace\subseteq \widehat{\vecb{V}}_{h}.\nonumber
\end{align}
Note that   $\widehat{Q}_h+Q_h^0=\widehat{Q}_h$ for $k\geq d$ because of \eqref{eq:qh_hat_q0}.
Using the inverse of the divergence operator defined in \eqref{eq:mathcalR}, the operator 
\[
\mathcal{R}^\perp\ :\ \vecb{V}_h^\mathrm{ct}\to \vecb{V}_h^\mathrm{R},  \quad
\mathcal{R}^\perp \vecb{v}_h^\mathrm{ct}:=\mathcal{R}_{k-1}
    \pi_{\widehat{Q}_h^\perp}
    \mathrm{div}\left(\vecb{v}_h^\mathrm{ct}\right),
\]    
is defined. Hence it is 
$\mathrm{div}(\mathcal{R}^\perp \vecb{v}_h^\mathrm{ct})=
    \pi_{\widehat{Q}_h^\perp\cap\widetilde{P}_{k-1}^{\mathrm{disc}}(\mathcal{T})}
    \mathrm{div}(\vecb{v}_h^\mathrm{ct})$ and this allows to represent $\widehat{\vecb{V}}_h$, for $k\geq d$, via
\begin{equation}\label{eq:reduced_hatVh_structure}
        \widehat{\vecb{V}}_h=\left\{\vecb{v}_h=\left(\vecb{v}_h^{\mathrm{ct}},-\mathcal{R}^\perp\vecb{v}_h^{\mathrm{ct}}\right)\ :\  \vecb{v}_h^{\mathrm{ct}}\in \vecb{V}_h^{\mathrm{ct}}\right\},
\end{equation}
see \cite[Lemma~6.1]{JLMR2022}.

For deriving a discrete problem that contains only the  velocity, one first observes 
that 
the velocity solution of \eqref{eq:fullscheme} is contained in $\widehat{\vecb{V}}_{h}$,
so that it is sufficient for the continuity constraint to consider test functions from 
$\widehat{Q}_h+Q_h^0$. Thus, for $k\geq d$, problem \eqref{eq:fullscheme} is
equivalent to finding $\vecb{u}_h \in \widehat{\vecb{V}}_{h}$ and $\hat p_h \in \widehat{Q}_h+Q_h^0$,
such that
\begin{equation}\label{eq:galerkin_reducedform0}
    \begin{aligned}
        \nu a_h(\vecb{u}_h, \vecb{v}_h) + b(\vecb{v}_h,\hat{p}_h)
         & = \left(\vecb{f},  \vecb{v}_h^\mathrm{ct}+\vecb{v}_h^\mathrm{R}\right)
         &                                                             & \text{for all } \vecb{v}_h \in \widehat{\vecb{V}}_{h}, \\
        b(\vecb{u}_h, q_h)
         & = 0
         &                                                             & \text{for all } q_h \in \widehat{Q}_h+Q_h^0,
    \end{aligned}
\end{equation}
and, considering only the velocity, of seeking $\vecb{u}_h\in \vecb{V}_{h,0}$ such that
\begin{equation}\label{eq:reduced_scheme}
    \nu a_h(\vecb{u}_h, \vecb{v}_h)
    = \left(\vecb{f}, \vecb{v}_h^\mathrm{ct}+\vecb{v}_h^\mathrm{R}\right)
    \quad \text{for all } \vecb{v}_h \in \vecb{V}_{h,0}.
\end{equation}
Using \eqref{eq:reduced_hatVh_structure}, problem \eqref{eq:galerkin_reducedform0} is also equivalent to
computing $\vecb{u}_h^{\mathrm{ct}} \in {\vecb{V}}_{h}^{\mathrm{ct}}$ and $\hat p_h \in \widehat{Q}_h+Q_h^0$ such that
\begin{equation}\label{eq:galerkin_reducedform}
    \begin{aligned}
        \nu a_h\left(\left(\vecb{u}_h^{\mathrm{ct}},-\mathcal{R}^\perp\vecb{u}_h^{\mathrm{ct}}\right), \left(\vecb{v}_h^{\mathrm{ct}},-\mathcal{R}^\perp\vecb{v}_h^{\mathrm{ct}}\right)\right) - \left(\mathrm{div}\left(\vecb{v}_h^{\mathrm{ct}}\right),\hat{p}_h\right)
         & = \left(\vecb{f}, \vecb{v}_h^{\mathrm{ct}}-\mathcal{R}^\perp\vecb{v}_h^{\mathrm{ct}}\right)
         &                                                                                  & \forall\  \vecb{v}_h^{\mathrm{ct}}\in {\vecb{V}}_{h}^{\mathrm{ct}}, \\
        \left(\mathrm{div}\left(\vecb{u}_h^{\mathrm{ct}}\right), q_h\right)
         & = 0
         &                                                                                  & \forall\  q_h \in \widehat{Q}_h,
    \end{aligned}
\end{equation}
for $k\geq d$. Since $\mathcal{R}^\perp$ is an elementwise computable operator, see 
\cite[Section~6]{JLMR2022}, system \eqref{eq:galerkin_reducedform} can be implemented easily
and solved efficiently.  Choosing $\widehat{Q}_h$
as $Q_h^0$ results in a $\vecb{P}_k\times P_0$ system, which will be called reduced scheme.
For $k<d$ one can similarly obtain a
$\vecb{P}_k\oplus\vecb{RT}_0\times P_0$ system, see \cite[Section~6.2]{JLMR2022} for more
details. Therein a condensation method of the $\vecb{RT}_0$ part is also discussed.

\section{Finite element scheme for the velocity based on a divergence-free basis}
\label{sec:scheme}

The aim of the current paper is to construct appropriate pairs $\vecb{V}_h\times Q_h$ in a slightly different
framework such that the basis functions of $\vecb{V}_{h,0}$ are easily computable. In this way,
a decoupled computation of velocity and pressure is possible, where the velocity
equation is given in \eqref{eq:reduced_scheme}. This approach is particularly appealing if only the velocity 
solution is needed. But even if in addition the pressure is of interest,  it is not longer necessary 
to solve a discrete problem of saddle point character. 

\subsection{Derivation of the scheme}
The new approach starts with a modification of the enrichment space
of the full scheme \eqref{eq:fullscheme}. Let
$\RTzerozero_0:=\vecb{RT}_0(\mathcal{T})\cap \vecb{H}_0(\mathrm{div},\Omega),$
where $\vecb{H}_0(\mathrm{div},\Omega)$ is the subspace of functions from $\vecb{H}(\mathrm{div},\Omega)$ with 
zero normal trace on $\partial\Omega$. 
Now, the original enrichment space $\vecb{V}_h^\mathrm{R}$ is replaced with
\begin{align*}
    \vecb{V}_h^\mathrm{MR}:=\vecb{V}_h^\mathrm{R}+\RTzerozero_0,
\end{align*}
such that $\vecb{V}_h^\mathrm{MR}=\vecb{V}_h^\mathrm{R}$ for $k<d$ (see \cite{JLMR2022}) and
$\vecb{V}_h^\mathrm{MR}=\vecb{V}_h^\mathrm{R}\oplus\RTzerozero_0$ for $k\geq d$.
By abuse of notation, we 
define $\vecb{V}_h:=\vecb{V}_h^\mathrm{ct}\times \vecb{V}_h^\mathrm{MR}$
and $\vecb{V}(h):=\vecb{V}\times \vecb{V}_h^\mathrm{MR}$.

For the moment, the new scheme can be written in form \eqref{eq:fullscheme}, with the modification mentioned above: Find $(\vecb{u}_h,p_h) \in \vecb{V}_{h}\times Q_h$, such that
\begin{equation}\label{eq:fullscheme_new}
    \begin{aligned}
        \nu a_h(\vecb{u}_h, \vecb{v}_h) + b(\vecb{v}_h,p_h)
         & = \left(\vecb{f}, \vecb{v}_h^\mathrm{ct}+\vecb{v}_h^\mathrm{R}\right)
         &                                                            & \text{for all } \vecb{v}_h=\left(\vecb{v}_h^\mathrm{ct},\vecb{v}_h^\mathrm{R}\right) \in \vecb{V}_{h}, \\
        b(\vecb{u}_h, q_h)
         & = 0
         &                                                            & \text{for all } q_h \in Q_{h}.
    \end{aligned}
\end{equation}

Next, a set of the computable basis functions
of the (new) space $\vecb{V}_{h,0}$ is derived and then the decoupling of velocity and 
pressure computation is performed. To this end, 
define
\begin{align*}
    \vecb{RT}_0^0:=\left\{\vecb{v}\in\vecb{RT}_0(\mathcal{T})\ :\ 
    \nabla\cdot\vecb{v}=0\right\},\quad
    \RTzerozero_0^0:=\left\{\vecb{v}\in\RTzerozero_0\ :\
    \nabla\cdot\vecb{v}=0\right\}.
\end{align*}

\begin{remark}[Basis for $\vecb{RT}_0^0$]
    \label{rem:basisRTdivfree}
In the two-dimensional situation, it is known, e.g., see \cite{Arnold1997} and \cite[Theorem 3.4]{Wang2009}, that
\begin{align*}
    \vecb{RT}_0^0=\mathrm{curl}\left(P_1(\mathcal{T})\cap H^1(\Omega)/\mathbb{R} \right)
    \quad \text{and} \quad
    \RTzerozero_0^0=\mathrm{curl}\({P_1(\mathcal{T})\cap H_0^1(\Omega)}\).
\end{align*}
Moreover, the curl of the nodal basis of ${P_1(\mathcal{T})\cap H_0^1(\Omega)}$
forms a basis for $\RTzerozero_0^0$.
In three dimensions, things become much more complicated due to the large kernel of the curl operator.
In this case, 
$\vecb{RT}_0^0$ and $\RTzerozero_0^0$ are
two subspaces of $\mathrm{curl} \vecb{N}_0$,
with $\vecb{N}_0$ being the lowest order N\'{e}d\'{e}lec space, see \cite{Robert2002}. In
\cite{Robert2002}, the basis functions of $\vecb{N}_0$
and some tools from graph theory are utilized for constructing linearly independent basis functions of
$\vecb{RT}_0^0$ and $\RTzerozero_0^0$, which are also employed
in our implementation of the proposed methods.
\end{remark}

The following lemma shows a decomposition of the space of discretely divergence-free functions
given by
\begin{align*}
    \vecb{V}_{h,0} := \bigl\lbrace \vecb{v}_h \in \vecb{V}_h : b(\vecb{v}_h,q_h) = 0 \text{ for all } q_h \in Q_h \bigr\rbrace.
\end{align*}

\begin{lemma}[Characterization of the space of discretely divergence-free functions]\label{lem:char_disc_divfree}
It holds
    \begin{equation*}
        \vecb{V}_{h,0}=\vecb{V}_{h,0}^\mathrm{ct} 
        \oplus \left(\vecb{0}\times\RTzerozero_0^0\right),
    \end{equation*}
with
\begin{equation}\label{eq:V_h0_st}
\vecb{V}_{h,0}^\mathrm{ct}:=\mathrm{span}\left\{\left(\vecb{v}_h^\mathrm{ct},
        -\mathcal{R}_{k-1}\mathrm{div}\left(\vecb{v}_h^\mathrm{ct}\right)-\Pi_h^{\mathrm{RT}_0}\vecb{v}_h^\mathrm{ct}\right)
\        :\ \vecb{v}_h^\mathrm{ct}\in \vecb{V}_h^\mathrm{ct}\right\}
\subset \vecb{V}_h
\end{equation}
and $\Pi_h^{\mathrm{RT}_0}$ being the standard lowest-order Raviart--Thomas interpolation, i.e.,
\begin{equation*}
    \Pi_h^{\mathrm{RT}_0} \vecb{v}^{\mathrm{ct}}_h := \sum_{F\in\mathcal{F}}\mathrm{dof}_F(\vecb{v}_h^\mathrm{ct})\vecb{\psi}_F
    \quad \forall \vecb{v}^\mathrm{ct}_h \in \vecb{V}_h^{\mathrm{ct}}.
\end{equation*}
\end{lemma}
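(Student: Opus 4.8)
The plan is to prove the claimed direct-sum decomposition by establishing the two inclusions together with directness, exploiting that each generator of $\vecb{V}_{h,0}^{\mathrm{ct}}$ is uniquely fixed by its continuous component. First I would check that the right-hand side is contained in $\vecb{V}_{h,0}$. For the generator in \eqref{eq:V_h0_st} the only nontrivial point is that its total divergence vanishes: using the cell-wise $L^2$-orthogonal splitting $P_{k-1}^{\mathrm{disc}}(\mathcal{T})=P_0^{\mathrm{disc}}(\mathcal{T})\oplus\widetilde{P}_{k-1}^{\mathrm{disc}}(\mathcal{T})$, the defining property $\mathrm{div}(\mathcal{R}_{k-1}q)=\pi_{\widetilde{P}_{k-1}^{\mathrm{disc}}(\mathcal{T})}q$ from \eqref{eq:mathcalR}, and the commuting relation $\mathrm{div}(\Pi_h^{\mathrm{RT}_0}\vecb{v})=\pi_{P_0^{\mathrm{disc}}(\mathcal{T})}\mathrm{div}(\vecb{v})$ of the lowest-order Raviart--Thomas interpolation, the two subtracted terms reproduce exactly the two orthogonal parts of $\mathrm{div}(\vecb{v}_h^{\mathrm{ct}})$, so the total divergence cancels. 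I would also record that both subtracted terms lie in $\vecb{V}_h^{\mathrm{MR}}$: the term $\mathcal{R}_{k-1}\mathrm{div}(\vecb{v}_h^{\mathrm{ct}})$ lies in $\widetilde{\vecb{RT}}_{k-1}^{\mathrm{int}}(\mathcal{T})\subseteq\vecb{V}_h^{\mathrm{MR}}$ for the enrichment space used here, while $\Pi_h^{\mathrm{RT}_0}\vecb{v}_h^{\mathrm{ct}}$ has vanishing normal flux on every boundary facet (because $\vecb{v}_h^{\mathrm{ct}}\in\vecb{V}=\vecb{H}_0^1(\Omega)$) and hence belongs to $\RTzerozero_0\subseteq\vecb{V}_h^{\mathrm{MR}}$. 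The inclusion $\vecb{0}\times\RTzerozero_0^0\subseteq\vecb{V}_{h,0}$ is immediate from the definition of $\RTzerozero_0^0$.

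Directness I would dispatch next, as it is easy: since the linear map $\vecb{v}_h^{\mathrm{ct}}\mapsto(\vecb{v}_h^{\mathrm{ct}},-\mathcal{R}_{k-1}\mathrm{div}(\vecb{v}_h^{\mathrm{ct}})-\Pi_h^{\mathrm{RT}_0}\vecb{v}_h^{\mathrm{ct}})$ recovers its own argument in the first slot, every element of $\vecb{V}_{h,0}^{\mathrm{ct}}$ is completely determined by its continuous component. Thus an element lying also in $\vecb{0}\times\RTzerozero_0^0$ must have continuous component $\vecb{0}$ and is therefore zero.

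The crux is the reverse inclusion. Given $\vecb{v}_h=(\vecb{v}_h^{\mathrm{ct}},\vecb{v}_h^{\mathrm{MR}})\in\vecb{V}_{h,0}$, I would subtract the canonical generator of $\vecb{V}_{h,0}^{\mathrm{ct}}$ carrying the same continuous component $\vecb{v}_h^{\mathrm{ct}}$, leaving a remainder $(\vecb{0},\vecb{r}_h)$ with $\vecb{r}_h\in\vecb{V}_h^{\mathrm{MR}}$. Since both $\vecb{v}_h$ and the subtracted generator lie in $\vecb{V}_{h,0}$, so does the remainder, whence $\mathrm{div}(\vecb{r}_h)=0$. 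It then remains to identify the divergence-free subspace of $\vecb{V}_h^{\mathrm{MR}}$ with $\RTzerozero_0^0$. For $k\ge d$ this is where I expect the main work: decomposing $\vecb{r}_h=\vecb{w}^{\mathrm{R}}+\vecb{w}^0$ along $\vecb{V}_h^{\mathrm{MR}}=\vecb{V}_h^{\mathrm{R}}\oplus\RTzerozero_0$, I would use that $\mathrm{div}(\vecb{w}^{\mathrm{R}})\in\widehat{Q}_h^\perp\subseteq\widetilde{P}_{k-1}^{\mathrm{disc}}(\mathcal{T})$ is cell-wise mean-zero while $\mathrm{div}(\vecb{w}^0)\in P_0^{\mathrm{disc}}(\mathcal{T})$ is cell-wise constant; these are $L^2$-orthogonal on each cell, so $\mathrm{div}(\vecb{r}_h)=0$ forces $\mathrm{div}(\vecb{w}^{\mathrm{R}})=0$ and $\mathrm{div}(\vecb{w}^0)=0$ separately. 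Injectivity of $\mathrm{div}$ on $\vecb{V}_h^{\mathrm{R}}$ (for $k\ge d$, where $\mathrm{div}$ is bijective onto $\widehat{Q}_h^\perp$) then yields $\vecb{w}^{\mathrm{R}}=\vecb{0}$, so that $\vecb{r}_h=\vecb{w}^0\in\RTzerozero_0^0$, which closes the argument.

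The main obstacle, as indicated, is precisely this last identification of $\{\vecb{w}\in\vecb{V}_h^{\mathrm{MR}}:\mathrm{div}(\vecb{w})=0\}$ with $\RTzerozero_0^0$; its delicacy lies in separating the two Raviart--Thomas scales, and it rests crucially on the structural facts from Section~\ref{sec:method_ori}, namely the bijectivity of $\mathrm{div}|_{\vecb{V}_h^{\mathrm{R}}}$ and the inclusion $\widehat{Q}_h^\perp\subseteq\widetilde{P}_{k-1}^{\mathrm{disc}}(\mathcal{T})$, together with the design of $\RTzerozero_0$ as the carrier of exactly the low-order divergence-free degrees of freedom with vanishing boundary flux. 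For the case $k<d$, where $\vecb{V}_h^{\mathrm{MR}}=\vecb{V}_h^{\mathrm{R}}$ and $\mathrm{div}|_{\vecb{V}_h^{\mathrm{R}}}$ is only surjective, I would treat the lowest-order Raviart--Thomas part explicitly and repeat the orthogonality argument on it; I expect the decomposition to persist, at the cost of this extra bookkeeping.
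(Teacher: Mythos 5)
Your proof is correct and its overall architecture matches the paper's: trivial intersection via the first component, forward inclusion via the identities $\mathrm{div}(\mathcal{R}_{k-1}q)=\pi_{\widetilde{P}_{k-1}^{\mathrm{disc}}(\mathcal{T})}q$ and $\mathrm{div}(\Pi_h^{\mathrm{RT}_0}\vecb{v}_h^{\mathrm{ct}})=\pi_{P_0^{\mathrm{disc}}(\mathcal{T})}\mathrm{div}(\vecb{v}_h^{\mathrm{ct}})$, and reverse inclusion by subtracting the generator of \eqref{eq:V_h0_st} with matching continuous component. The differences lie in the crux step. You obtain $\mathrm{div}(\vecb{r}_h)=0$ for the remainder from linearity of $\vecb{V}_{h,0}$ plus the already-proved forward inclusion, which is slightly slicker than the paper's explicit computation of that divergence. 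More substantively, you then decompose along the internal sum $\vecb{V}_h^{\mathrm{MR}}=\vecb{V}_h^{\mathrm{R}}\oplus\RTzerozero_0$ and use bijectivity of $\mathrm{div}$ on $\vecb{V}_h^{\mathrm{R}}$ --- both available only for $k\geq d$ --- whereas the paper decomposes in the ambient space $\vecb{RT}_0(\mathcal{T})\oplus\widetilde{\vecb{RT}}_{k-1}^{\mathrm{int}}(\mathcal{T})$: the fact that $\mathrm{div}(\vecb{v}_h^{\mathrm{R}}+\mathcal{R}_{k-1}\mathrm{div}(\vecb{v}_h^{\mathrm{ct}}))$ is piecewise constant kills the $\widetilde{\vecb{RT}}_{k-1}^{\mathrm{int}}(\mathcal{T})$ component, since $\mathrm{div}$ is injective there with cell-wise mean-zero range. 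The paper's choice buys uniformity in $k$: it covers $k<d$ (e.g., $k=1$, which is used in the numerical studies) with no extra work, and that is precisely the case your write-up leaves as a sketch. Your sketched fix --- treating the lowest-order Raviart--Thomas part explicitly and repeating the orthogonality argument --- is exactly the paper's mechanism, so the gap is benign; if you carry it out, note that injectivity of $\mathrm{div}$ persists on the $\widetilde{\vecb{RT}}_{k-1}^{\mathrm{int}}(\mathcal{T})$ component even though it fails on $\vecb{V}_h^{\mathrm{R}}$ as a whole for $k<d$, and that the zero boundary normal flux of the remainder (needed for membership in $\RTzerozero_0^0$ rather than merely $\vecb{RT}_0^0$) follows from the $\vecb{H}_0(\mathrm{div},\Omega)$-conformity of all its pieces.
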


\begin{proof}
First it will be proved that $\vecb{V}_{h,0}^\mathrm{ct}\cap (\vecb{0}\times\RTzerozero_0^0)=\{\vecb{0}\}$.
    For any $\vecb{v}_h=(\vecb{v}_h^\mathrm{ct},\vecb{v}_h^\mathrm{R})\in \vecb{V}_{h,0}^\mathrm{ct}\cap (\vecb{0}\times\RTzerozero_0^0)$,
    we have $\vecb{v}_h^\mathrm{ct}=\vecb{0}$ and hence also
    $\vecb{v}_h^\mathrm{R} = -\mathcal{R}_{k-1}\mathrm{div}(\vecb{v}_h^\mathrm{ct})-\Pi_h^{\mathrm{RT}_0}\vecb{v}_h^\mathrm{ct}=\vecb{0}$.

The next step consists in showing that $\vecb{V}_{h,0}^\mathrm{ct}\oplus
        (\vecb{0}\times\RTzerozero_0^0)\subseteq \vecb{V}_{h,0}$.
    Let $\vecb{v}_h^\mathrm{ct} \in \vecb{V}_h^\mathrm{ct}$, then the definition of the 
    extension of the inverse divergence operator \eqref{eq:mathcalR}  yields 
\[    
\mathrm{div}\left(\mathcal{R}_{k-1}\mathrm{div}\left(\vecb{v}_h^\mathrm{ct}\right)\right)=
        \pi_{\widetilde{P}_{k-1}^{\mathrm{disc}}(\mathcal{T})}\mathrm{div}\left(\vecb{v}_h^\mathrm{ct}\right)
\]
and for the projection to $\vecb{RT}_0(\mathcal{T})$ it is known that 
\[
\mathrm{div}\left(\Pi_h^{\mathrm{RT}_0}\vecb{v}_h^\mathrm{ct}\right)=
        \pi_{Q_h^0}\mathrm{div}\left(\vecb{v}_h^\mathrm{ct}\right).
\]
Since $\mathrm{div}(\vecb{v}_h^\mathrm{ct}) \in P_{k-1}^\mathrm{disc}(\mathcal{T})\cap Q = \widetilde{P}_{k-1}^{\mathrm{disc}}(\mathcal{T}) \oplus_{L^2} Q_h^0$, 
it follows that 
    $\mathrm{div}(\vecb{v}_h^\mathrm{ct}
        -\mathcal{R}_{k-1}\mathrm{div}(\vecb{v}_h^\mathrm{ct})
        -\Pi_h^{\mathrm{RT}_0}\vecb{v}_h^\mathrm{ct})=0$,
which demonstrates that $\vecb{V}_{h,0}^\mathrm{ct}\oplus
        (\vecb{0}\times\RTzerozero_0^0)\subseteq \vecb{V}_{h,0}$.

It remains to prove that $\vecb{V}_{h,0}\subseteq
        \vecb{V}_{h,0}^\mathrm{ct}\oplus
        (\vecb{0}\times\RTzerozero_0^0)$.
Consider an arbitrary $\vecb{v}_h=(\vecb{v}_h^\mathrm{ct},\vecb{v}_h^\mathrm{R})\in \vecb{V}_{h,0}$. Since it can be split as
\begin{align*}
    \vecb{v}_h=\left(\vecb{v}_h^\mathrm{ct},-\mathcal{R}_{k-1}\mathrm{div}(\vecb{v}_h^\mathrm{ct})
    -\Pi_h^{\mathrm{RT}_0}\vecb{v}_h^\mathrm{ct}\right)+\left(\vecb{0},\vecb{v}_h^\mathrm{R}+\mathcal{R}_{k-1}\mathrm{div}(\vecb{v}_h^\mathrm{ct})
    +\Pi_h^{\mathrm{RT}_0}\vecb{v}_h^\mathrm{ct}\right),
\end{align*}
it suffices to show that
\begin{equation}\label{eq:lemma1_0}
\vecb{v}_h^\mathrm{R}+\mathcal{R}_{k-1}\mathrm{div}\left(\vecb{v}_h^\mathrm{ct}\right)
        +\Pi_h^{\mathrm{RT}_0}\vecb{v}_h^\mathrm{ct}\in \RTzerozero_0^0.
\end{equation}
By construction, the functions from $\vecb{V}_{h,0}$ are even weakly divergence-free, since a property of the form \eqref{eqn:def_VhO} is valid also with the modified enrichment space.
Using $\mathrm{div}(\vecb{v}_h^\mathrm{R})  =  -\mathrm{div}(\vecb{v}_h^\mathrm{ct})$
and $\mathrm{div}(\vecb{v}_h^\mathrm{ct}) \in {P}_{k-1}^{\mathrm{disc}}(\mathcal{T})$
gives   $\mathrm{div}(\vecb{v}_h^\mathrm{R}) \in {P}_{k-1}^{\mathrm{disc}}(\mathcal{T})$
and
\begin{align*}
\mathrm{div}\left(\vecb{v}_h^\mathrm{R}+
        \mathcal{R}_{k-1}\mathrm{div}\left(\vecb{v}_h^\mathrm{ct}\right)+\Pi_h^{\mathrm{RT}_0}\vecb{v}_h^\mathrm{ct}\right) =  
        \mathrm{div}\left(\vecb{v}_h^\mathrm{R}\right)-
        \pi_{\widetilde{P}_{k-1}^{\mathrm{disc}}(\mathcal{T})}
        \mathrm{div}\left(\vecb{v}_h^\mathrm{R}\right)- \pi_{P_0^\mathrm{disc}(\mathcal{T})}\mathrm{div}\left(\vecb{v}_h^\mathrm{R}\right) = 0.
\end{align*}
In addition, it can be seen from this equation that  $\mathrm{div} (\vecb{v}_h^\mathrm{R}+
        \mathcal{R}_{k-1}\mathrm{div}(\vecb{v}_h^\mathrm{ct})) \in   Q_h^0
$ (implying that the $\widetilde{\vecb{RT}}_{k-1}^{\mathrm{int}}(\mathcal{T})$ part of $\vecb{v}_h^\mathrm{R}+
        \mathcal{R}_{k-1}\mathrm{div}(\vecb{v}_h^\mathrm{ct})$ vanishes), thus $\vecb{v}_h^\mathrm{R}+
        \mathcal{R}_{k-1}\mathrm{div}(\vecb{v}_h^\mathrm{ct}) \in \RTzerozero_0$ and hence the left-hand side of \eqref{eq:lemma1_0} is also a lowest order Raviart--Thomas function. Consequently \eqref{eq:lemma1_0} is shown. 
\end{proof}

Given the conforming velocity space $\vecb{V}_h^\mathrm{ct}$, then the representation of $\vecb{V}_{h,0}$ in Lemma~\ref{lem:char_disc_divfree} offers
a straightforward way to implement this space. The operators $\mathcal{R}_{k-1}$ and $\Pi_h^{\mathrm{RT}_0}$ are
just cellwise and define a linear operator $\Pi^\text{R} : \vecb{V}_h^\text{ct} \rightarrow \vecb{V}_h^R$
given by
\begin{align*}
    \Pi^\text{R} := -\mathcal{R}_{k-1}\mathrm{div}(\vecb{v}_h^\mathrm{ct})-\Pi_h^{\mathrm{RT}_0}\vecb{v}_h^\mathrm{ct}
\end{align*}
with a sparse matrix representation $R$.
Altogether, this leads to the pressure-free formulation, which has the same velocity solution as 
\eqref{eq:fullscheme_new}:
seek $(\vecb{u}_h^\mathrm{ct}, \vecb{u}_h^\text{RT0}) \in \vecb{V}_h^\text{ct} \times \RTzerozero^0_0$
such that
\begin{align}\label{eq:pressure_free_scheme}
  \nu  a_h \left((\vecb{u}_h^\text{ct}, \Pi^\text{R} \vecb{u}_h^\text{ct} + \vecb{u}_h^\text{RT0}), (\vecb{v}_h^\text{ct}, \Pi^\text{R} \vecb{v}_h^\text{ct} + \vecb{v}_h^\text{RT0})\right)
    = \left(\vecb{f}, \vecb{v}_h^\text{ct} + \Pi^\text{R} \vecb{v}_h^\text{ct} + \vecb{v}_h^\text{RT0} \right)
\end{align}
for all $(\vecb{v}_h^\text{ct}, \vecb{v}_h^\text{RT0}) \in \vecb{V}_h^\text{ct} \times \RTzerozero^0_0$.
A basis for $\RTzerozero^0_0$ is discussed in Remark~\ref{rem:basisRTdivfree}
and we also consider the representation matrix $S$ of the change of basis
from $\RTzerozero^0_0$ to $\vecb{RT}_0$.

Algebraically a linear system of the form
\begin{align}\label{eqn:linear_system}
    \begin{pmatrix}
        A^\text{ct,ct} + A^\text{ct,R} R + R^T A^\text{R,ct} + R^T A^\text{R,R} R & A^\text{ct,R} S + R^T A^\text{R,R} S\\
        S^T A^\text{R,ct} + S^T A^\text{R,R} R & S^T A^\text{R,R} S
    \end{pmatrix}
    \begin{pmatrix}
        \vecb{x}^\text{ct}\\
        \vecb{x}^{\RTzerozero^0_0}
    \end{pmatrix}
    = \begin{pmatrix}
    \vecb{b}^\text{ct} + R^T \vecb{b}^\text{R}\\
    S^T \vecb{b}^{\mathrm{RT}_0}
    \end{pmatrix}
\end{align}
needs to be solved. Here, the $A$ blocks represent the corresponding
terms in \eqref{eqn:def_ah} that also need to be implemented for the original method
\eqref{eq:fullscheme_new}. The vectors $\vecb{b}^\text{ct}, \vecb{b}^\text{R}$,
and $\vecb{b}^{\mathrm{RT}_0}$ correspond to the
evaluation of the right-hand side $(\vecb{f}, \bullet)$ in the basis functions of $\vecb{V}_h^\text{ct}$, $\vecb{V}_h^\text{R}$,
and $\vecb{RT}_0$, respectively. Notice that, depending on $\delta$, it holds either
symmetry or skew-symmetry of $A$ in the sense that $A^\text{ct,R} = -\delta (A^\text{R,ct})^T$.

\subsection{Error estimate} 

The relationship between the cases $\delta=-1$ and $\delta=1$ in \eqref{eq:fullscheme_new} 
(or \eqref{eq:pressure_free_scheme}) 
is very similar to the one between symmetric interior penalty Galerkin (SIPG) and 
nonsymmetric interior penalty Galerkin (NIPG), cf. \cite{ABCM2002}. 
Apart from the symmetry property, the primary difference between SIPG and NIPG is that, 
in SIPG, the penalty parameter must be chosen sufficiently large to guarantee stability (coercivity).
On the other hand, the coercivity, boundedness, and consistency properties of $a_h$ with $\delta=1$ 
has been thoroughly analyzed in \cite{JLMR2022}, although the velocity space therein is 
slightly different from the one in \eqref{eq:fullscheme_new} when $k\geq d$. Therefore 
we only list some main results here and their proofs are omitted.

Define the norm $|||\bullet |||_\star$ on $\vecb{V} \times \vecb{V}_h^\mathrm{R}$ as
\begin{align*}
   ||| \vecb{v} |||^2_\star := a_h(\vecb{v}, \vecb{v}) +\|h_\mathcal{T}\Delta_\text{pw} \vecb{v}^{\mathrm{ct}}\|^2+\left\|\mathrm{div}(\widetilde{\vecb{v}}^\mathrm{R})\right\|^2,
\end{align*}
for all $\vecb{v}=:(\vecb{v}^\mathrm{ct}, \vecb{v}^\mathrm{R})=:(\vecb{v}^\mathrm{ct}, \vecb{v}^{\mathrm{RT}_0}+\widetilde{\vecb{v}}^\mathrm{R})\in \vecb{V} \times \vecb{V}_h^\mathrm{R}$.

\begin{lemma} [Properties of $a_h$]
    Assume that the penalty parameter $\alpha$ in $a_h$ is sufficiently large when $\delta=-1$. 
    Then, the bilinear form $a_h$ is coercive, bounded, and consistent in the sense that 
    \begin{align*}
    \hspace{3cm} a_h(\vecb{v}_h,\vecb{v}_h)& \gtrsim |||\vecb{v}_h|||_\star^2 && \text{for all } \vecb{v}_h\in \vecb{V}_{h,0},\hspace{3cm}\\
    a_h(\vecb{u},\vecb{v}) & \lesssim |||\vecb{u}|||_\star|||\vecb{v}|||_\star && \text{for all } \vecb{u}, \vecb{v}\in \vecb{V} \times \vecb{V}_h^\mathrm{R},
    \end{align*}
    and  
    \begin{align*}
    \nu a_h((\vecb{u},\vecb{0}), \vecb{v}_h) = (\vecb{f},\vecb{v}_h^\mathrm{ct}+\vecb{v}_h^\mathrm{R}) \quad \text{for all } \vecb{v}_h \in \vecb{V}_{h,0},
    \end{align*}
    respectively, where $\vecb{u}\in \vecb{V}$ is the velocity solution of \eqref{eq:generaleq}.
\end{lemma}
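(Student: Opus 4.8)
The plan is to treat the three assertions separately, starting from consistency, which is the most direct, and then obtaining coercivity and boundedness after establishing that $|||\cdot|||_\star$ is equivalent on $\vecb{V}\times\vecb{V}_h^\mathrm{R}$ to the natural norm
\[
|||\vecb{v}|||_{\mathrm{full}}^2 := \|\nabla\vecb{v}^\mathrm{ct}\|^2 + \|h_\mathcal{T}\Delta_\text{pw}\vecb{v}^\mathrm{ct}\|^2 + a_h^\mathrm{D}\!\left(\vecb{v}^{\mathrm{RT}_0},\vecb{v}^{\mathrm{RT}_0}\right) + \left\|\mathrm{div}(\widetilde{\vecb{v}}^\mathrm{R})\right\|^2 .
\]
Two $h$-uniform ingredients are used repeatedly: the local Poincar\'e estimate \cite[Lemma~5.2]{JLMR2022}, which yields $\|h_\mathcal{T}^{-1}\widetilde{\vecb{v}}^\mathrm{R}\| \lesssim \|\mathrm{div}(\widetilde{\vecb{v}}^\mathrm{R})\|$ on $\widetilde{\vecb{RT}}_{k-1}^\mathrm{int}(\mathcal{T})$, and the spectral equivalence $\|h_\mathcal{T}^{-1}\vecb{v}^{\mathrm{RT}_0}\|^2 \sim a_h^\mathrm{D}(\vecb{v}^{\mathrm{RT}_0},\vecb{v}^{\mathrm{RT}_0})/\alpha_0$, which follows from a scaling argument on shape-regular cells and, importantly, remains valid on the enlarged lowest-order space $\RTzerozero_0$ that enters through the modified enrichment.

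For consistency, the second slot $(\vecb{u},\vecb{0})$ has vanishing Raviart--Thomas part, so the stabilization and the $\delta$-term drop out, and integration by parts (legitimate because $\vecb{u}$ solves \eqref{eq:generaleq}) gives $\nu a_h((\vecb{u},\vecb{0}),\vecb{v}_h) = -\nu(\Delta\vecb{u},\vecb{v}_h^\mathrm{ct}+\vecb{v}_h^\mathrm{R}) = (\vecb{f}-\nabla p,\vecb{v}_h^\mathrm{ct}+\vecb{v}_h^\mathrm{R})$. It then remains to show $(\nabla p,\vecb{v}_h^\mathrm{ct}+\vecb{v}_h^\mathrm{R})=0$. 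Here I would invoke Lemma~\ref{lem:char_disc_divfree}: for $\vecb{v}_h\in\vecb{V}_{h,0}$ the total field is weakly divergence-free, and since $\vecb{v}_h^\mathrm{ct}\in\vecb{H}_0^1(\Omega)$ while each Raviart--Thomas contribution (the images of $\mathcal{R}_{k-1}$ and of $\Pi_h^{\mathrm{RT}_0}$ applied to a function vanishing on $\partial\Omega$, as well as the $\RTzerozero_0^0$ part) has zero normal trace on $\partial\Omega$, one further integration by parts annihilates both the volume and the boundary term. This is exactly where the restriction to $\vecb{V}_{h,0}$ and the modified enrichment are needed.

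For the norm equivalence and for coercivity I would distinguish the two signs of $\delta$. When $\delta=1$ the mixed terms $\mp(\Delta_\text{pw}\vecb{v}^\mathrm{ct},\vecb{v}^\mathrm{R})$ cancel in $a_h(\vecb{v},\vecb{v})$, leaving $\|\nabla\vecb{v}^\mathrm{ct}\|^2 + a_h^\mathrm{D}$; the two missing norm contributions are then recovered from the inverse inequality $\|h_\mathcal{T}\Delta_\text{pw}\vecb{v}_h^\mathrm{ct}\|\lesssim\|\nabla\vecb{v}_h^\mathrm{ct}\|$ and, crucially, from weak divergence-freeness on $\vecb{V}_{h,0}$, namely $\mathrm{div}(\widetilde{\vecb{v}}_h^\mathrm{R}) = -\pi_{\widetilde{P}_{k-1}^{\mathrm{disc}}(\mathcal{T})}\mathrm{div}(\vecb{v}_h^\mathrm{ct})$, so that $\|\mathrm{div}(\widetilde{\vecb{v}}_h^\mathrm{R})\|\le\|\mathrm{div}(\vecb{v}_h^\mathrm{ct})\|\lesssim\|\nabla\vecb{v}_h^\mathrm{ct}\|$. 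When $\delta=-1$ the mixed term doubles; I would estimate $|2(\Delta_\text{pw}\vecb{v}^\mathrm{ct},\vecb{v}^\mathrm{R})|$ by Cauchy--Schwarz against $\|h_\mathcal{T}\Delta_\text{pw}\vecb{v}^\mathrm{ct}\|\,(\|h_\mathcal{T}^{-1}\vecb{v}^{\mathrm{RT}_0}\|+\|h_\mathcal{T}^{-1}\widetilde{\vecb{v}}^\mathrm{R}\|)$ and absorb it, via Young's inequality and the two ingredients above, into the stabilization terms $a_h^\mathrm{D}$ and $\alpha\|\mathrm{div}(\widetilde{\vecb{v}}^\mathrm{R})\|^2$, which is possible precisely when $\alpha$ and $\alpha_0$ are large enough. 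Boundedness then follows by bounding $a_h(\vecb{u},\vecb{v})$ term by term with the same two estimates and a Cauchy--Schwarz inequality for the symmetric positive semidefinite form $a_h^\mathrm{S}$.

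The hard part will be the symmetric case $\delta=-1$: as in the SIPG/NIPG analogy recalled before the statement, the now non-cancelling mixed terms must be absorbed into the penalty, and the threshold for $\alpha,\alpha_0$ has to be fixed independently of $h$. This rests on the $h$-uniformity of the two ingredients --- the Poincar\'e bound on $\widetilde{\vecb{RT}}_{k-1}^\mathrm{int}(\mathcal{T})$ and the spectral equivalence of $a_h^\mathrm{D}$ --- and on verifying that the latter survives on the enlarged space $\RTzerozero_0$, so that the argument of \cite{JLMR2022} carries over to the modified velocity space for $k\ge d$.
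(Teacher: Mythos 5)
The paper itself gives no proof of this lemma: it states that the coercivity, boundedness, and consistency for $\delta=1$ were ``thoroughly analyzed'' in \cite{JLMR2022} and that ``their proofs are omitted,'' with only the SIPG/NIPG remark indicating how the symmetric case $\delta=-1$ differs, so there is nothing to compare line by line. Your reconstruction is correct and uses exactly the ingredients that citation and remark point to: the local Poincar\'e estimate of \cite[Lemma~5.2]{JLMR2022}, the spectral equivalence of $a_h^\mathrm{D}$ with the $h_\mathcal{T}^{-1}$-weighted $L^2$ norm on $\vecb{RT}_0(\mathcal{T})$ (which restricts trivially to the subspace $\RTzerozero_0$, so no separate verification is needed), cancellation of the cross terms for $\delta=1$ combined with weak divergence-freeness on $\vecb{V}_{h,0}$ to control $\|\mathrm{div}(\widetilde{\vecb{v}}_h^\mathrm{R})\|$, Young-type absorption into the penalty terms for $\delta=-1$, and integration by parts together with zero normal traces for consistency. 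The only caveat is that your absorption argument needs $\alpha_0$ as well as $\alpha$ to be sufficiently large in the symmetric case; this matches the paper's own remark in Section~2 (``whenever $\alpha$ and $\alpha_0$ are sufficiently large'') and the choice $\alpha_0=\alpha$ in the experiments, even though the lemma's hypothesis mentions only $\alpha$.
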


In addition, since the velocity space used in our paper includes the one in \cite{JLMR2022}, 
an inf-sup stability which is similar to \cite[Theorem 4.1]{JLMR2022} is satisfied, i.e., 
\begin{align*}
    \sup_{\vecb{v}_h \in \vecb{V}_h} \frac{b(\vecb{v}_h,q_h)}{||| \vecb{v}_h |||_\star} \geq \beta \|q_h\|
    \quad \text{for all } q_h \in Q_h,
\end{align*}
where $\beta>0$ is a constant independent of $h$.

The above properties, together with the approximation properties of $\vecb{V}_h^\mathrm{ct}$, e.g., see \cite{BrennerScott:2008}, 
yield the following error estimates, derived in the same way as in \cite[Section~5]{JLMR2022}.

\begin{theorem} \label{thm:velo_error}
    Let $(\vecb{u},p)$ and $(\vecb{u}_h,p_h)$ be the solutions of \eqref{eq:generaleq} and \eqref{eq:pressure_free_scheme}, 
    respectively, and consider a family of shape-regular triangulations $\{\mathcal T_h\}_{h>0}$. Assume that $\alpha$ is sufficiently large when $\delta=-1$ and 
    $(\vecb{u},p)\in \vecb{H}^{k+1}(\Omega)\times H^k(\Omega)$, then
    \[
    |||(\vecb{u},\vecb{0})-\vecb{u}_h|||_\star  \leq C h^k |\vecb{u}|_{\vecb{H}^{k+1}(\Omega)},
    \]
    where $C>0$ is independent of $h$ and $\nu$.
\end{theorem}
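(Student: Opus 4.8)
The plan is to run a Céa-/Strang-type argument on the discretely divergence-free space $\vecb{V}_{h,0}$, exploiting that the pressure-free scheme \eqref{eq:pressure_free_scheme} is nothing but a coercive, bounded and consistent Galerkin method on $\vecb{V}_{h,0}$. First I would record Galerkin orthogonality. Since the effective test functions of \eqref{eq:pressure_free_scheme} range exactly over $\vecb{V}_{h,0}$ (by Lemma~\ref{lem:char_disc_divfree}), that scheme is equivalent to $\nu\, a_h(\vecb{u}_h,\vecb{v}_h)=(\vecb{f},\vecb{v}_h^\mathrm{ct}+\vecb{v}_h^\mathrm{R})$ for all $\vecb{v}_h\in\vecb{V}_{h,0}$, while the consistency property of the preceding lemma gives $\nu\, a_h((\vecb{u},\vecb{0}),\vecb{v}_h)=(\vecb{f},\vecb{v}_h^\mathrm{ct}+\vecb{v}_h^\mathrm{R})$ for the same test functions. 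Subtracting and dividing by $\nu$ yields $a_h((\vecb{u},\vecb{0})-\vecb{u}_h,\vecb{v}_h)=0$ for all $\vecb{v}_h\in\vecb{V}_{h,0}$; note that the cancellation of $\nu$ here is exactly what will make the final constant $\nu$-independent.

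Next I would reduce the error to a best-approximation error in $\vecb{V}_{h,0}$. For any $\vecb{w}_h\in\vecb{V}_{h,0}$, set $\vecb{e}_h:=\vecb{w}_h-\vecb{u}_h\in\vecb{V}_{h,0}$ and combine coercivity, the orthogonality above, and boundedness:
\begin{align*}
|||\vecb{e}_h|||_\star^2 &\lesssim a_h(\vecb{e}_h,\vecb{e}_h)
= a_h\bigl(\vecb{w}_h-(\vecb{u},\vecb{0}),\vecb{e}_h\bigr)
\lesssim |||\vecb{w}_h-(\vecb{u},\vecb{0})|||_\star\,|||\vecb{e}_h|||_\star .
\end{align*}
Dividing by $|||\vecb{e}_h|||_\star$ and using the triangle inequality then gives
\begin{align*}
|||(\vecb{u},\vecb{0})-\vecb{u}_h|||_\star \lesssim \inf_{\vecb{w}_h\in\vecb{V}_{h,0}} |||(\vecb{u},\vecb{0})-\vecb{w}_h|||_\star .
\end{align*}
The crucial point is then to exhibit a good candidate $\vecb{w}_h$, and here Lemma~\ref{lem:char_disc_divfree} is the key tool: taking a quasi-interpolation $I_h\vecb{u}\in\vecb{V}_h^\mathrm{ct}$ (e.g.\ Scott--Zhang), the pair $\vecb{w}_h:=(I_h\vecb{u},\Pi^\mathrm{R} I_h\vecb{u})$ lies in $\vecb{V}_{h,0}^\mathrm{ct}\subseteq\vecb{V}_{h,0}$ by construction, so it is an admissible comparison function without any further divergence correction.

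It then remains to estimate $|||(\vecb{u},\vecb{0})-\vecb{w}_h|||_\star$ term by term. The conforming contributions $\|\nabla(\vecb{u}-I_h\vecb{u})\|$ and $\|h_\mathcal{T}\Delta_\mathrm{pw}(\vecb{u}-I_h\vecb{u})\|$ are of order $h^k|\vecb{u}|_{\vecb{H}^{k+1}(\Omega)}$ by the standard approximation properties of $\vecb{V}_h^\mathrm{ct}$, the $h_\mathcal{T}$ weight compensating the extra derivative. For the enrichment part $-\Pi^\mathrm{R} I_h\vecb{u}=\mathcal{R}_{k-1}\mathrm{div}(I_h\vecb{u})+\Pi_h^{\mathrm{RT}_0}I_h\vecb{u}$, the decisive observation is that $\mathrm{div}\,\vecb{u}=0$, so that $\mathrm{div}(I_h\vecb{u})=\mathrm{div}(I_h\vecb{u}-\vecb{u})$; consequently the divergence term $\|\mathrm{div}(\widetilde{\vecb{v}}^\mathrm{R})\|$ and the $\vecb{RT}_0$ stabilization contributions are all controlled by $\|\mathrm{div}(I_h\vecb{u}-\vecb{u})\|\lesssim\|\nabla(I_h\vecb{u}-\vecb{u})\|\lesssim h^k|\vecb{u}|_{\vecb{H}^{k+1}(\Omega)}$. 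This is precisely where the divergence-free, pressure-robust structure enters and forces the pressure-independent, $\nu$-independent bound.

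The main obstacle I anticipate is twofold. First, producing the comparison function inside the constrained space $\vecb{V}_{h,0}$ with optimal order — a naive nodal interpolant would not be discretely divergence-free — which is resolved by the explicit representation of $\vecb{V}_{h,0}^\mathrm{ct}$ in Lemma~\ref{lem:char_disc_divfree} together with the elementwise continuity of $\mathcal{R}_{k-1}$ and $\Pi_h^{\mathrm{RT}_0}$. Second, coercivity, boundedness and the norm $|||\cdot|||_\star$ are stated on $\vecb{V}\times\vecb{V}_h^\mathrm{R}$, whereas for $k\ge d$ the functions $\vecb{u}_h,\vecb{e}_h\in\vecb{V}_{h,0}$ may carry an additional component in $\RTzerozero_0^0$; one must verify that these quantities extend to $\vecb{V}\times\vecb{V}_h^\mathrm{MR}$ and that on this extra, divergence-free $\vecb{RT}_0$ part the form $a_h$ collapses to the stabilization $a_h^\mathrm{D}$ and remains bounded in $|||\cdot|||_\star$, so that the abstract argument carries over verbatim. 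All remaining steps are the routine term-by-term approximation estimates summarised above and mirror \cite[Section~5]{JLMR2022}.
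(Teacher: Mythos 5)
Your overall skeleton---Galerkin orthogonality from the consistency property, a C\'ea-type bound via coercivity and boundedness on $\vecb{V}_{h,0}$, and then a best-approximation estimate---is exactly the argument the paper intends (its proof is omitted and delegated to \cite[Section~5]{JLMR2022}, which follows this pattern). The genuine gap is your choice of comparison function. For $\vecb{w}_h=(I_h\vecb{u},\Pi^\mathrm{R} I_h\vecb{u})\in\vecb{V}_{h,0}^\mathrm{ct}$, the lowest-order Raviart--Thomas part of the enrichment is $-\Pi_h^{\mathrm{RT}_0}I_h\vecb{u}$, whose face degrees of freedom are $\mathrm{dof}_F(I_h\vecb{u})=\int_F I_h\vecb{u}\cdot\vecb{n}\ds$, i.e.\ the \emph{full} fluxes of $\vecb{u}$ through the faces, of generic size $O(h^{d-1})$, not interpolation errors. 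The approximation error in the $|||\bullet|||_\star$ norm therefore contains
\begin{equation*}
a_h^\mathrm{D}\bigl(\Pi_h^{\mathrm{RT}_0}I_h\vecb{u},\Pi_h^{\mathrm{RT}_0}I_h\vecb{u}\bigr)
=\alpha_0\sum_{F\in\mathcal{F}^0}\mathrm{dof}_F\bigl(I_h\vecb{u}\bigr)^2\,\bigl(\mathrm{div}\, \vecb{\psi}_F,\mathrm{div}\, \vecb{\psi}_F\bigr)
\approx \bigl\|h_\mathcal{T}^{-1}\Pi_h^{\mathrm{RT}_0}I_h\vecb{u}\bigr\|_{L^2(\Omega)}^2
\approx h^{-2}\|\vecb{u}\|_{L^2(\Omega)}^2,
\end{equation*}
because $a_h^\mathrm{D}$ is spectrally equivalent to $(h_\mathcal{T}^{-2}\,\bullet,\bullet)$ on $\vecb{RT}_0(\mathcal{T})$. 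This quantity \emph{diverges} as $h\to0$, so $|||(\vecb{u},\vecb{0})-\vecb{w}_h|||_\star\sim h^{-1}$ and the C\'ea bound is useless. The step where you assert that ``the $\vecb{RT}_0$ stabilization contributions are all controlled by $\|\mathrm{div}(I_h\vecb{u}-\vecb{u})\|$'' is false: $a_h^\mathrm{D}$ is not a divergence-based form; it penalizes the face fluxes themselves and by design does \emph{not} vanish on divergence-free $\vecb{RT}_0$ fields---controlling the kernel of the divergence is precisely its purpose. In the symmetric case $\delta=-1$ the cross term $(\Delta_\mathrm{pw}(\vecb{u}-I_h\vecb{u}),\Pi^\mathrm{R} I_h\vecb{u})$ suffers from the same defect, since $\|\Pi_h^{\mathrm{RT}_0}I_h\vecb{u}\|_{L^2(\Omega)}\approx\|\vecb{u}\|_{L^2(\Omega)}$ does not tend to zero.

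The repair uses exactly the ingredient you left unused: the second summand $\vecb{0}\times\RTzerozero_0^0$ in the decomposition of Lemma~\ref{lem:char_disc_divfree} (this freedom is, in fact, the reason the enrichment space was enlarged to $\vecb{V}_h^\mathrm{MR}$). Take instead
\begin{equation*}
\vecb{w}_h:=\bigl(I_h\vecb{u},\;\Pi^\mathrm{R} I_h\vecb{u}+\Pi_h^{\mathrm{RT}_0}\vecb{u}\bigr).
\end{equation*}
Since $\mathrm{div}\,\vecb{u}=0$ one has $\mathrm{div}(\Pi_h^{\mathrm{RT}_0}\vecb{u})=\pi_{Q_h^0}\mathrm{div}\,\vecb{u}=0$, and since $\vecb{u}\cdot\vecb{n}=0$ on $\partial\Omega$ the boundary dofs vanish, hence $\Pi_h^{\mathrm{RT}_0}\vecb{u}\in\RTzerozero_0^0$ and $\vecb{w}_h\in\vecb{V}_{h,0}$. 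Now the $\vecb{RT}_0$ part of the enrichment error is $\Pi_h^{\mathrm{RT}_0}(\vecb{u}-I_h\vecb{u})$, whose dofs satisfy $|\mathrm{dof}_F(\vecb{u}-I_h\vecb{u})|\lesssim h^{(d-1)/2}h^{k+1/2}|\vecb{u}|_{\vecb{H}^{k+1}(\omega_F)}$ by trace and interpolation estimates ($\omega_F$ being the element patch of $F$), so the $a_h^\mathrm{D}$ contribution sums to $O(h^{2k}|\vecb{u}|^2_{\vecb{H}^{k+1}(\Omega)})$; likewise $\|h_\mathcal{T}^{-1}\Pi_h^{\mathrm{RT}_0}(\vecb{u}-I_h\vecb{u})\|_{L^2(\Omega)}\lesssim h^k|\vecb{u}|_{\vecb{H}^{k+1}(\Omega)}$ takes care of the symmetric cross term. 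With this corrected $\vecb{w}_h$, your remaining term-by-term estimates (the conforming part, and the higher-order part via $\mathrm{div}(I_h\vecb{u})=\mathrm{div}(I_h\vecb{u}-\vecb{u})$, where the divergence argument \emph{is} valid because $\mathcal{R}_{k-1}$ maps into $\widetilde{\vecb{RT}}_{k-1}^{\mathrm{int}}(\mathcal{T})$ and satisfies the scaled bound of \cite[Lemma~5.2]{JLMR2022}) go through and deliver the claimed $\nu$-independent $O(h^k)$ bound.
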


\section{Dealing with non-homogeneous boundary conditions}
\label{sec:nonhomo_BC}

The incorporation of non-homoge\-neous Dirichlet boundary conditions in the proposed divergence-free 
decoupled methods is not straightforward. This section presents a simple approach, which is restricted 
to two-dimensional problems, and a general strategy suitable for two- and three-dimensional problems. 

The discrete solution $\vecb{u}_h$ in the novel method can be split into three parts, compare the 
characterization of $\vecb{V}_{h,0}$  in Lemma~\ref{lem:char_disc_divfree}: The $\vecb{P}_k$ part
(consider the first component of $\vecb{V}_{h,0}^\mathrm{ct}$), a
divergence-free $\vecb{RT}_0$ part (second part of $\vecb{V}_{h,0}$), and a function in $\vecb{RT}_{k-1}(\mathcal{T})$ (second component of $\vecb{V}_{h,0}^\mathrm{ct}$), whose divergence is exactly the additive inverse of 
the divergence of the $\vecb{P}_k$ part.
Once the $\vecb{P}_k$ part is given, the third part of $\vecb{u}_h$ is also fixed, compare the definition 
\eqref{eq:V_h0_st} of $\vecb{V}_{h,0}^\mathrm{ct}$. 
When dealing 
with non-homogeneous boundary conditions, the $\vecb{P}_k$ boundary part  can be defined by the usual interpolations/projections, and then the third part  is 
determined by it. Denoting the sum of these parts by $\vecb{u}_h^\mathrm{ct+}$, then 
it remains to determine a contribution  $\check{\vecb{u}}_h^\mathrm{R}$  from the divergence-free $\vecb{RT}_0$ part such that
\begin{align*}
    \int_F\left(\vecb{u}_h^\mathrm{ct+}+\check{\vecb{u}}_h^\mathrm{R}\right)\cdot \vecb{n}\ \ds = \int_F\vecb{u} \cdot \vecb{n}\ \ds \quad\text{for all } F\subset\partial\Omega.
\end{align*}
This condition determines the degrees of freedom of $\check{\vecb{u}}_h^\mathrm{R}$ on each facet at the boundary. 

Altogether, the task of imposing non-homogeneous Dirichlet boundary conditions is reduced to the following general problem: For any function $g$ on $\partial\Omega$, 
find an inexpensive strategy to get a function $\vecb{z}_h\in \vecb{RT}_0^0$ such that $\int_F \vecb{z}_h\cdot\vecb{n}\ \ds=
\int_F g\ \ds$ for all $F\subset\partial\Omega$. This function can be used to impose the boundary values of the divergence-free $\vecb{RT}_0$ part.
Having decomposed in this way $\check{\vecb{u}}_h^\mathrm{R} = \check{\check{\vecb{u}}}_h^\mathrm{R}+\vecb{z}_h$, where 
$\check{\check{\vecb{u}}}_h^\mathrm{R}\in \RTzerozero^0_0$ takes homogeneous boundary conditions, 
leads to a change of the right-hand side in \eqref{eqn:linear_system} to
\begin{align*}
    \begin{pmatrix}
        \vecb{b}^\text{ct} + R^T \vecb{b}^\text{R} - A^\text{ct,R} \vecb{x}_z\\
        S^T \vecb{b}^{\mathrm{RT}_0} - S^T A^\text{R,R} \vecb{x}_z
    \end{pmatrix},
\end{align*}
where $\vecb{x}_z$ are the coefficients of $\vecb{z}_h$ w.r.t.\ the $\vecb{RT}_0$ basis (hence a transform
of the basis via $S$ does not appear).
After having solved the modified system \eqref{eqn:linear_system}, one needs
to add $\vecb{z}_h$ to its solution.

Below two strategies for the construction of $\vecb{z}_h$ are discussed.

\subsection{A simple strategy in two dimensions}\label{sec:nonhomo_BC_2d}
A simple strategy to deal with non-homogeneous boundary conditions in two dimensions was proposed in \cite[Sect.~3.3]{Wang2009}. Note that for any 
$\vecb{z}_h=\mathrm{curl} \phi_h$ with $\phi_h\in P_1(\mathcal{T})$, the tangential derivative
satisfies $\partial\phi_h / \partial \tau =\vecb{z}_h\cdot\vecb{n}$, and thus
\begin{align*}
    \phi_h(\vecb{x})=\phi_h(\vecb{x}_0)+\int_{\vecb{x}_0}^{\vecb{x}} \vecb{z}_h \cdot \vecb{n} \ \ds, \quad \vecb{x} \in \partial \Omega,
\end{align*}
where both $\vecb{x}_0\in \partial\Omega$ and $\phi_h(\vecb{x}_0)$ are arbitrary. 
Thus, fixing a vertex $\vecb{x}_0$ at some component of $\partial\Omega$ and a value $\phi_h(\vecb{x}_0)$ 
determines the value of $\phi_h$ at all other vertices at this boundary component. After having computed 
in this way values at all vertices on each component of $\partial\Omega$, one can assign some value to each 
vertex in $\Omega$, e.g., zero, defining in this way $\phi_h$ and then $\vecb{z}_h$ can be computed. 

\subsection{A robust strategy for both two- and three-dimensional spaces}\label{sec:nonhomo_BC_3d}
For any function $g$ on $\partial\Omega$, consider the following problem: Find $(\vecb{z}_h,r_h)\in \vecb{RT}_0(\mathcal{T})\times Q_h^0$ 
such that 
\begin{align}\label{eq:findingzh}
    \int_F \vecb{z}_h\cdot\vecb{n}\ \ds&=\int_F g\ \ds
    && \text{for all }  F\subset\partial\Omega,\\
    D_h(\vecb{z}_h,\vecb{v}_h)-(r_h,\mathrm{div}(\vecb{v}_h))+(\mathrm{div}(\vecb{z}_h),q_h)&=0
    && \text{for all } (\vecb{v}_h,q_h)\in \RTzerozero_0\times Q_h^0,
\end{align}
where 
\begin{align*}
    D_h(\vecb{z}_h, \vecb{v}_h)
    := \sum_{F\in\mathcal{F}}\mathrm{dof}_F(\vecb{z}_h) \mathrm{dof}_F(\vecb{v}_h) \, \(\vecb{\psi}_F, \vecb{\psi}_F\).
\end{align*}
In \cite{li2021low} it is proved that $D_h(\bullet, \bullet)$ and $(\bullet, \bullet)$ are spectrally equivalent on $\vecb{RT}_0(\mathcal{T})$, which,
together with the inf-sup stability of $\RTzerozero_0\times Q_h^0$ (cf. \cite{BBF:book:2013}), guarantees the unique solvability 
of the Darcy-type problem \eqref{eq:findingzh}. Notice that the matrix from $D_h$ is diagonal
with positive entries on its diagonal. Hence, one obtains a symmetric positive definite (s.p.d.) Schur
complement matrix and one could compute $r_h$ by using a solver for s.p.d. systems.
And since $D_h$ is a diagonal matrix, one can obtain thereafter $\vecb{z}_h$ locally from $r_h$. Clearly we have $\vecb{z}_h\in \vecb{RT}_0^0$.

Computing the boundary values by solving \eqref{eq:findingzh} requires
the solution of a global Darcy-type problem, which is not efficient. 
A much more efficient approach is solving a similar problem on 
a much smaller subdomain near the boundary. 
Define 
\begin{align*}
    \mathcal{T}_\partial=\left\{T\in\mathcal{T}: \partial T\cap \partial\Omega\neq \emptyset\right\},
\end{align*}
that is, $\mathcal{T}_\partial$ is the union of all vertex patches (see, e.g., \cite{LLMS2017}) of the boundary vertices and let $\Omega_\partial=\cup_{T\in \mathcal{T}_\partial} T$. 
Then, the following much smaller system also provides a solution with the desired properties: Find $(\vecb{z}_h,r_h)\in \vecb{RT}_0(\mathcal{T}_\partial)\times (P_0^\mathrm{disc}(\mathcal{T}_\partial)\cap L_0^2(\Omega_\partial))$ 
such that
\begin{align}
    \int_F \vecb{z}_h\cdot\vecb{n}\ \ds& =\int_F g\ \ds
    && \text{for all } F\subset\partial\Omega,\label{eq:patchboundary}\\
    \int_F \vecb{z}_h\cdot\vecb{n}\ \ds&=0
    && \text{for all } F\subset\partial\Omega_\partial\setminus\partial\Omega,\label{eq:patchboundary2}\\
    D_h(\vecb{z}_h,\vecb{v}_h)-(r_h,\mathrm{div}(\vecb{v}_h))+(\mathrm{div}(\vecb{z}_h),q_h)&=0
    && \text{for all } (\vecb{v}_h,q_h)\in (\vecb{RT}_0(\mathcal{T}_\partial)\cap \vecb{H}_0(\mathrm{div},\Omega_\partial))\label{eq:findingzh-small}\\
    &&& \hspace{3cm} \times (P_0^\mathrm{disc}(\mathcal{T}_\partial)\cap L_0^2(\Omega_\partial)).\nonumber
\end{align}
If $\Omega$ is simply connected, $\Omega_\partial$ must be a connected domain. Then the inf-sup stability of
the pair $(\vecb{RT}_0(\mathcal{T}_\partial)\cap \vecb{H}_0(\mathrm{div},\Omega_\partial))\times (P_0^\mathrm{disc}(\mathcal{T}_\partial)\cap L_0^2(\Omega_\partial))$
can be guaranteed, too. Thus \eqref{eq:patchboundary}--\eqref{eq:findingzh-small} is a well-posed problem. If $\Omega$ is multiply connected,
$\Omega_\partial$ can be disconnected (consisting of several connected components). In this case one just needs to solve several independent systems like \eqref{eq:patchboundary}--\eqref{eq:findingzh-small}.

\section{Computing the discrete pressure}
\label{sec:pressure}
In many situations the pressure solution is also important. Here we discuss one strategy to approximate the pressure, which is inspired by 
a class of consistent splitting schemes proposed in \cite{GS2003_jcp,LLP2007}. It seeks a pressure solution 
in $H^1(\Omega)/\mathbb{R}$. It relies
on the identity
$\Delta \vecb{u} = \nabla(\nabla\cdot\vecb{u}) - \nabla\times(\nabla\times\vecb{u})$
which allows to replace the Laplacian in \eqref{eq:generaleq}
by the curl-curl operator. Then, testing the momentum
with $\nabla q$ for any $q \in H^1(\Omega)$ yields
\begin{equation}\label{eq:pres}
    (\nabla p,\nabla q) = -(\nu\nabla\times\nabla\times\vecb{u},\nabla q)+(\vecb{f},\nabla q)=\nu\int_{\partial\Omega} (\nabla\times\vecb{u}) \cdot (\vecb{n}\times \nabla q )\ \ds+(\vecb{f},\nabla q).
\end{equation}
This leads to the following pressure reconstruction scheme:
Find $p_h \in P_{\ell}(\mathcal{T}) \cap H^1(\Omega) \cap Q$ with $\ell\geq 1$ such that
\begin{equation}\label{eq:pres_reco}
    (\nabla p_h, \nabla q_h) = (\vecb{f}, \nabla q_h) + \nu\int_{\partial\Omega} (\nabla\times\vecb{u}_h^\mathrm{ct}) \cdot (\vecb{n}\times \nabla q_h)\ \ds
    \qquad \text{for all } q_h \in P_{\ell}(\mathcal{T}) \cap H^1(\Omega) \cap Q,
\end{equation} 
which is a pure Neumann problem with inhomogeneous boundary conditions. 
The solution can be computed by utilizing a solver for s.p.d. systems.
Notice that the finite element pressure reconstruction computed with \eqref{eq:pres_reco}
substitutes the finite element pressure which is present in \eqref{eq:fullscheme_new} and which then disappears in the derivation 
of the method with divergence-free velocity basis. For the sake of simplifying the notation in the presentation 
of the numerical results, we use the notation $p_h$ also for the reconstructed discrete pressure.
    
\begin{theorem}\label{thm:cont_pres_error}
Under the assumption that a quasi-uniform family of triangulations $\{\mathcal{T}_h\}_{h>0}$ is considered, 
the error of $p_h$ computed by \eqref{eq:pres_reco} can be bounded by
\begin{equation}\label{eq:est_press}
  \| \nabla(p - p_h) \|_{L^2(\Omega)}
  \leq \inf_{q_h \in P_{\ell}(\mathcal{T}_h) \cap H^1(\Omega) \cap Q} \| \nabla(p - q_h) \|_{L^2(\Omega)}   
  + \nu C h^{k-1} \| \vecb{u} \|_{\vecb{H}^{k+1}(\Omega)},
\end{equation}
where $C>0$ is independent of $h$ and $\nu$.
\end{theorem}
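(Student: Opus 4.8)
The plan is to regard \eqref{eq:pres_reco} as a conforming Galerkin discretization, on the mean-zero space $P_\ell(\mathcal{T})\cap H^1(\Omega)\cap Q$, of the pure Neumann problem \eqref{eq:pres}, whose only defect is a consistency error stemming from the replacement of $\vecb{u}$ by $\vecb{u}_h^\mathrm{ct}$ in the boundary term. Since $P_\ell(\mathcal{T})\cap H^1(\Omega)\cap Q\subset H^1(\Omega)$, the continuous identity \eqref{eq:pres} may be tested with any discrete $q_h$; subtracting \eqref{eq:pres_reco} yields the error equation
\begin{equation*}
(\nabla(p-p_h),\nabla q_h)=\nu\int_{\partial\Omega}(\nabla\times(\vecb{u}-\vecb{u}_h^\mathrm{ct}))\cdot(\vecb{n}\times\nabla q_h)\ \ds=:E_h(q_h)
\end{equation*}
for all admissible $q_h$. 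I would then introduce the Ritz projection $p_h^{\mathrm G}$ of $p$, defined by $(\nabla p_h^{\mathrm G},\nabla q_h)=(\nabla p,\nabla q_h)$; because the energy inner product is exactly $(\nabla\bullet,\nabla\bullet)$, it is the best approximation, so $\|\nabla(p-p_h^{\mathrm G})\|=\inf_{q_h}\|\nabla(p-q_h)\|$. Subtracting the defining relation of $p_h^{\mathrm G}$ from the error equation gives $(\nabla(p_h^{\mathrm G}-p_h),\nabla q_h)=E_h(q_h)$, so that everything reduces to estimating the consistency functional $E_h$ on discrete arguments.

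The heart of the proof is therefore the bound
\begin{equation*}
|E_h(q_h)|\leq \nu C h^{k-1}\|\vecb{u}\|_{\vecb{H}^{k+1}(\Omega)}\,\|\nabla q_h\|_{L^2(\Omega)}\qquad\text{for all admissible }q_h.
\end{equation*}
To establish it, I would split $\vecb{u}-\vecb{u}_h^\mathrm{ct}=(\vecb{u}-I_h\vecb{u})+(I_h\vecb{u}-\vecb{u}_h^\mathrm{ct})$ with $I_h$ a standard interpolation into $\vecb{V}_h^\mathrm{ct}$. For the interpolation part, a scaled trace inequality together with the local bounds $\|\nabla(\vecb{u}-I_h\vecb{u})\|_{L^2(T)}\lesssim h_T^{k}|\vecb{u}|_{\vecb{H}^{k+1}(T)}$ and $|\vecb{u}-I_h\vecb{u}|_{\vecb{H}^2(T)}\lesssim h_T^{k-1}|\vecb{u}|_{\vecb{H}^{k+1}(T)}$ yields $\|\nabla\times(\vecb{u}-I_h\vecb{u})\|_{L^2(\partial\Omega)}\lesssim h^{k-1/2}|\vecb{u}|_{\vecb{H}^{k+1}(\Omega)}$; pairing this with the discrete trace inequality $\|\nabla q_h\|_{L^2(\partial\Omega)}\lesssim h^{-1/2}\|\nabla q_h\|_{L^2(\Omega)}$ produces the factor $h^{k-1}$. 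For the finite element remainder $I_h\vecb{u}-\vecb{u}_h^\mathrm{ct}$, I would apply the discrete trace inequality to \emph{both} factors, losing a full power $h^{-1}$, and compensate it with $\|\nabla(I_h\vecb{u}-\vecb{u}_h^\mathrm{ct})\|\leq\|\nabla(I_h\vecb{u}-\vecb{u})\|+\|\nabla(\vecb{u}-\vecb{u}_h^\mathrm{ct})\|\lesssim h^k\|\vecb{u}\|_{\vecb{H}^{k+1}(\Omega)}$, where the second summand is controlled by $|||(\vecb{u},\vecb{0})-\vecb{u}_h|||_\star$ from Theorem~\ref{thm:velo_error}. Quasi-uniformity enters precisely here, so that the element-wise inverse trace inequalities sum up with a single global power of $h$.

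Finally, inserting $q_h=p_h^{\mathrm G}-p_h$ into the consistency bound gives $\|\nabla(p_h^{\mathrm G}-p_h)\|\leq\nu C h^{k-1}\|\vecb{u}\|_{\vecb{H}^{k+1}(\Omega)}$, and the triangle inequality $\|\nabla(p-p_h)\|\leq\|\nabla(p-p_h^{\mathrm G})\|+\|\nabla(p_h^{\mathrm G}-p_h)\|$ delivers \eqref{eq:est_press} with coefficient $1$ on the best-approximation term. The main obstacle is the consistency estimate: the boundary integral only exposes traces of $\nabla\times(\vecb{u}-\vecb{u}_h^\mathrm{ct})$, and each finite element factor costs $h^{-1/2}$ through the inverse trace inequality, so one must spend exactly the available orders of interpolation and of the velocity error estimate to recover $h^{k-1}$. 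The clean separation into a best approximation plus a consistency defect, realized through the Ritz projection $p_h^{\mathrm G}$, is what removes the spurious Strang constant and yields the sharp coefficient on the infimum.
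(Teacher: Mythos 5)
Your proposal is correct, and it shares the paper's outer skeleton: the same Riesz (Ritz) projection $\Pi_h p$, the same error equation obtained by subtracting \eqref{eq:pres_reco} from \eqref{eq:pres}, and the same conclusion via best approximation plus a perturbation of order $\nu C h^{k-1}\|\vecb{u}\|_{\vecb{H}^{k+1}(\Omega)}$. Where you genuinely diverge is in the treatment of the boundary consistency functional. The paper does \emph{not} estimate the boundary integral by trace inequalities; instead it introduces an auxiliary problem whose solution $e_{p,\mathrm{S}}$ is the Riesz representer (over all of $Q$) of that functional --- the so-called Stokes pressure of $\vecb{u}-\vecb{u}_h^\mathrm{ct}$ --- and then invokes the volume-based estimate \eqref{eq:stokes_pres_estimate} from \cite{LLP2007,LS2025}, which controls $\|\nabla e_{p,\mathrm{S}}\|_{L^2(\Omega)}$ by $\|\nabla(\vecb{u}-\vecb{u}_h^\mathrm{ct})\|_{L^2(\Omega)}$, $\|\Delta_\mathrm{pw}(\vecb{u}-\vecb{u}_h^\mathrm{ct})\|_{L^2(\Omega)}$, and $h_F^{-1}$-weighted gradient jumps; these volume quantities are then bounded by interpolation, inverse estimates, and Theorem~\ref{thm:velo_error}, exactly as in your argument's last stage. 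You instead bound the functional directly on the boundary: Cauchy--Schwarz on $\partial\Omega$, the splitting $\vecb{u}-\vecb{u}_h^\mathrm{ct}=(\vecb{u}-I_h\vecb{u})+(I_h\vecb{u}-\vecb{u}_h^\mathrm{ct})$, a scaled trace inequality for the interpolation part ($h^{k-1/2}$), and inverse trace inequalities ($h^{-1/2}$ each) for the piecewise-polynomial factors, recovering the same rate $h^{k-1}$. Your route is more elementary and self-contained (no external Stokes-pressure theory), and it exploits the fact that the error equation only requires the functional on \emph{discrete} test functions, which is what legitimizes the inverse trace inequality on $\nabla q_h$; the paper's route proves the stronger statement that the functional is bounded over all of $H^1(\Omega)$, avoids handling boundary traces of the error altogether, and ties the analysis to the consistent-splitting literature from which the reconstruction \eqref{eq:pres_reco} originates. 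One point you should make explicit, though the paper is equally implicit about it: extracting $\|\nabla(\vecb{u}-\vecb{u}_h^\mathrm{ct})\|_{L^2(\Omega)}\lesssim h^k\|\vecb{u}\|_{\vecb{H}^{k+1}(\Omega)}$ from Theorem~\ref{thm:velo_error} uses that the norm $|||\cdot|||_\star$ controls $\|\nabla\vecb{v}^\mathrm{ct}\|_{L^2(\Omega)}$, which is immediate for $\delta=1$ but requires the coercivity argument (sufficiently large $\alpha$) for $\delta=-1$.
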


\begin{proof}
Let $\Pi_h p \in P_{\ell}(\mathcal{T}_h) \cap H^1(\Omega) \cap Q$ be the Riesz projection
(best approximation with respect to $H^1(\Omega)$) of $p$. Using this projection, the error equation 
obtained by subtracting \eqref{eq:pres_reco} from \eqref{eq:pres} with test function 
$q_h = \Pi_h p - p_h$ gives 
    \begin{equation}\label{eq:pres_error_identity}
    \| \nabla(\Pi_h p - p_h) \|_{L^2(\Omega)}^2
     = \(\nabla (p - p_h), \nabla (\Pi_h p - p_h)\)
     = \nu\int_{\partial\Omega} \(\nabla\times\( \vecb{u} - \vecb{u}_h^\mathrm{ct}\)\) \cdot \(\vecb{n}\times \nabla \(\Pi p - p_h\)\)\ \ds.
    \end{equation}
    Consider the following auxiliary problem: Find $e_{p,\mathrm{S}}\in Q$ such that 
    \begin{align*} 
        (\nabla e_{p,\mathrm{S}}, \nabla q)=\int_{\partial\Omega} \(\nabla\times\( \vecb{u} - \vecb{u}_h^\mathrm{ct}\)\) \cdot \(\vecb{n}\times \nabla q\)\ \ds
    \end{align*}
    for all $q\in Q$. 
    Then \eqref{eq:pres_error_identity} implies 
    \begin{align*} 
        \| \nabla(\Pi_h p - p_h) \|_{L^2(\Omega)}^2 = \nu (\nabla e_{p,\mathrm{S}}, \nabla (\Pi_h p - p_h))
        \leq \nu \|\nabla e_{p,\mathrm{S}}\|_{L^2(\Omega)}\|\nabla (\Pi_h p - p_h)\|_{L^2(\Omega)},
    \end{align*} 
    and further 
    \begin{align*} 
        \| \nabla(\Pi_h p - p_h) \|_{L^2(\Omega)} \leq \nu \|\nabla e_{p,\mathrm{S}}\|_{L^2(\Omega)}.
    \end{align*}
    Indeed, $e_{p,\mathrm{S}}$ is the Stokes pressure corresponding to $\vecb{u} - \vecb{u}_h^\mathrm{ct}$ 
    and it satisfies the estimate, see \cite{LLP2007,LS2025},
    \begin{eqnarray}\label{eq:stokes_pres_estimate}
        \|\nabla e_{p,\mathrm{S}}\|_{L^2(\Omega)}^2 & \leq & C_\varepsilon \|\nabla(\vecb{u} - \vecb{u}_h^\mathrm{ct})\|_{L^2(\Omega)}^2
        + \left(\frac{1}{2}+\varepsilon\right) \|\Delta_\mathrm{pw} (\vecb{u} - \vecb{u}_h^\mathrm{ct})\|_{L^2(\Omega)}^2 \nonumber\\
         && + C \sum_{F\in\mathcal{F}^0} h_F^{-1} \int_F |\jump{\nabla(\vecb{u} - \vecb{u}_h^\mathrm{ct})}|^2\ \ds, 
    \end{eqnarray}
    where $\varepsilon$ can be arbitrarily small, $C_\varepsilon>0$ depends on $\varepsilon^{-1}$, and
    $\jump{\cdot}$ denotes the jump operator on facets whose definition can be found in \cite{LS2025}.
    Under the quasi-uniformity assumption, 
    it follows from \cite[Theorem 1.6.6]{BrennerScott:2008}
    and Young's inequality that 
    \begin{align}\label{eq:trace_estimate}
    \sum_{F\in\mathcal{F}^0} h_F^{-1} \int_F |\jump{\nabla(\vecb{u} - \vecb{u}_h^\mathrm{ct})}|^2\ \ds \leq 
    C \left( h^{-2}  \|\nabla(\vecb{u} - \vecb{u}_h^\mathrm{ct})\|_{L^2(\Omega)}^2 + 
         \sum_{T\in\mathcal{T}_h} \|\nabla(\vecb{u} - \vecb{u}_h^\mathrm{ct})\|_{H^1(T)}^2\right).
    \end{align}
    Let $\mathcal{I}\vecb{u}$ be an interpolation of $\vecb{u}$ satisfying \cite[Eq.~(4.4.5)]{BrennerScott:2008}. 
    By successively applying the triangle inequality, the inverse inequality, 
    and again the triangle inequality, one obtains
    \begin{align*} 
        \|\nabla(\vecb{u} - \vecb{u}_h^\mathrm{ct})\|_{H^1(T)}
        &\leq \|\nabla(\vecb{u} - \mathcal{I}\vecb{u})\|_{H^1(T)}+
        \|\nabla(\mathcal{I}\vecb{u} - \vecb{u}_h^\mathrm{ct})\|_{H^1(T)}\\
        &\leq \|\nabla(\vecb{u} - \mathcal{I}\vecb{u})\|_{H^1(T)}+
        C h^{-1} \|\nabla(\mathcal{I}\vecb{u} - \vecb{u}_h^\mathrm{ct})\|_{L^2(T)}\\
        &\leq \|\nabla(\vecb{u} - \mathcal{I}\vecb{u})\|_{H^1(T)}
        +C h^{-1}\|\nabla(\vecb{u} - \mathcal{I}\vecb{u})\|_{L^2(T)}
        +C h^{-1} \|\nabla(\vecb{u} - \vecb{u}_h^\mathrm{ct})\|_{L^2(T)},
    \end{align*}
    which, together with Theorem~\ref{thm:velo_error}, implies 
    \begin{align*} 
        \sum_{T\in\mathcal{T}_h} \|\nabla(\vecb{u} - \vecb{u}_h^\mathrm{ct})\|_{H^1(T)}^2\leq 
        C h^{2k-2} \| \vecb{u} \|_{\vecb{H}^{k+1}(\Omega)}^2, 
    \end{align*} 
    and further, by utilizing \eqref{eq:stokes_pres_estimate} and \eqref{eq:trace_estimate}, one obtains 
    \begin{align*} 
        \|\nabla e_{p,\mathrm{S}}\|_{L^2(\Omega)}^2 \leq C h^{2k-2} \| \vecb{u} \|_{\vecb{H}^{k+1}(\Omega)}^2.
    \end{align*}
    The Pythagorean identity
    \[
    \| \nabla(p - p_h) \|_{L^2(\Omega)}^2 = \| \nabla(p - \Pi_h p) \|_{L^2(\Omega)}^2 + \| \nabla(\Pi_h p - p_h) \|_{L^2(\Omega)}^2.
    \]
    concludes the proof.
\end{proof}

Theorem~\ref{thm:cont_pres_error} shows that the pressure error is also optimal if one chooses $\ell\leq k-1$.

Notice that the usual norm of the error analysis for the pressure in mixed finite element methods is the 
$L^2(\Omega)$ norm and not the $L^2(\Omega)$ norm of the gradient as in Theorem~\ref{thm:cont_pres_error}. The proof 
in mixed methods uses the discrete inf-sup condition for the bilinear form that couples the
velocity and pressure finite element spaces. However, there is no such bilinear form in the decoupled method
as a consequence of using divergence-free velocity basis functions. {We think that it is possible
to derive an estimate for the $L^2(\Omega)$ error of the reconstructed pressure utilizing the Aubin--Nitsche trick and 
and expect convergence of one order higher then in \eqref{eq:est_press} for sufficiently smooth data.
However, this is outside the scope of this paper.}

\section{Numerical studies}
\label{sec:experiments}

The objectives of this section are twofold. First, the expectations concerning the convergence
of the decoupled methods should be supported, e.g., compare Theorem~\ref{thm:velo_error}, where the 
velocity approximation was computed by solving \eqref{eq:pressure_free_scheme} and the pressure 
approximation as described in Section~\ref{sec:pressure}.
And second, the efficiency 
of these methods shall be compared with those of the full mixed method from \cite{JLMR2022}, see \eqref{eq:fullscheme}, 
and the reduced mixed method from \cite{JLMR2022}, see \eqref{eq:galerkin_reducedform}.
Using a decoupled approach seems to be on the first glance more efficient than having to solve 
a saddle point problem, which was one of our main motivation for constructing the new methods. However, 
also other aspects possess an impact on the efficiency, like the sparsity pattern of the arising 
matrices and the number of non-zero entries (nnz). The considered methods differ in these aspects. 
In addition, the decoupled methods and the reduced mixed method need additional overhead to 
compute the final solution. For the decoupled method, this overhead consists of constructing the 
divergence-free basis, of incorporating non-homogeneous Dirichlet boundary conditions, and of
computing the pressure (if needed). 

All methods have been implemented in the code 
\cite{ExtendableFEMjl}. For solving the linear systems of equations, the 
sparse direct solver Pardiso \cite{SGFSpardiso, PardisoJL} 
was used, where 
we set {\tt iparm 8} to the value 10, {\tt iparm 10} to the value 6 for the two-dimensional simulations and 
the value 8 for the three-dimensional ones,
{\tt iparm 11} to the value 1, and {\tt iparm 13} to 0 for the polynomial degree $k=1$ and for the skew-symmetric
case in three dimensions, and otherwise to the value 1.
For the special case $k=d=3$, we changed
{\tt iparm 8} to the value 4 and {\tt iparm 10} to the value 11.

The stabilization parameter $\alpha$ in \eqref{eqn:RTk_stabilization} is set to $\alpha = 100$
in $d=2$ dimensions and to $\alpha = 200$ in $d=3$ dimensions. For the special case $k=d=3$ the stabilization
parameter is set $\alpha = 300$.
The stabilization parameter $\alpha_0 = 1$ is used in \eqref{eqn:RT0_stabilization}
for the lowest order schemes $k=1$
according to the experience in \cite{JLM2024} and for the skew-symmetric case ($\delta = 1$) in $d=3$
dimensions.
In the higher order ($k > 1$) and symmetric case ($\delta = -1$), $\alpha_0 = \alpha$ is used.

Notice that for polynomial degree $k=1$ the terms with $\delta$ vanish for all methods. We present 
the corresponding results together with the symmetric variants of the methods. 

\begin{figure}[t!]
\centerline{    \includegraphics[width=0.3\textwidth]{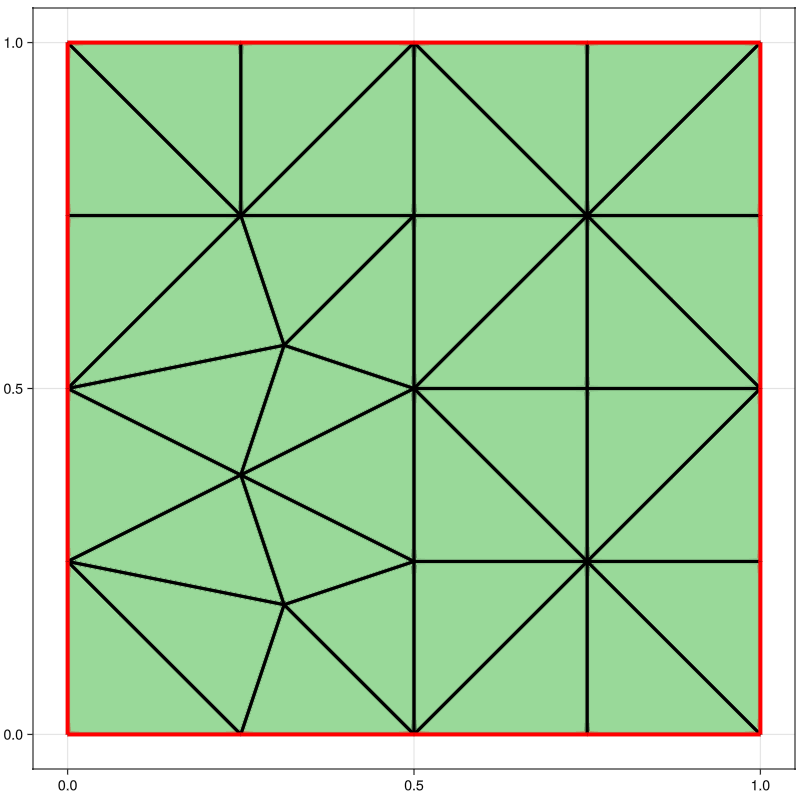}\hspace*{1em}
    \includegraphics[width=0.3\textwidth]{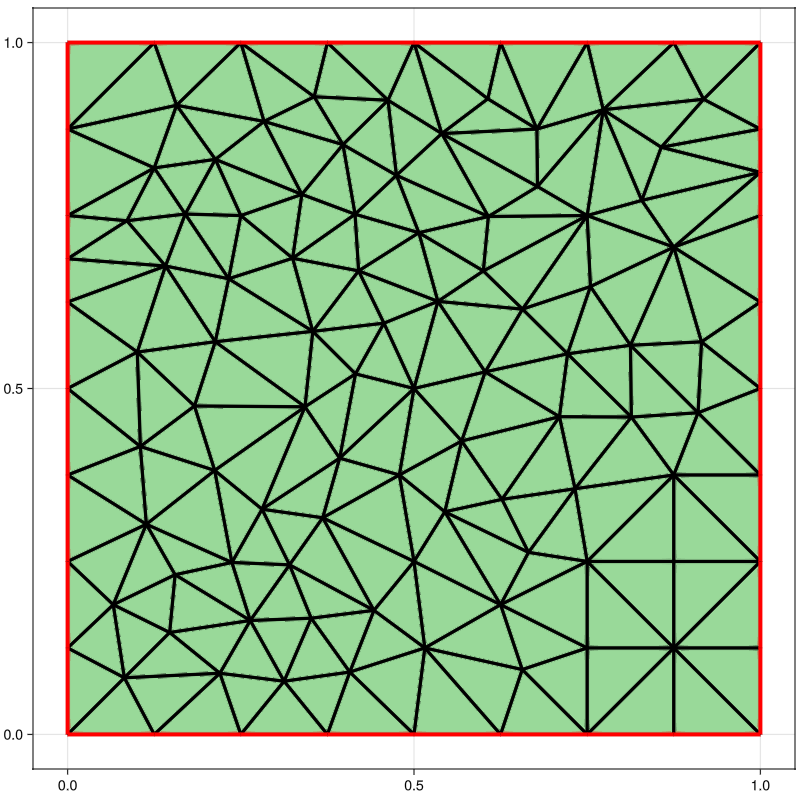}\hspace*{1em}
    \includegraphics[width=0.3\textwidth]{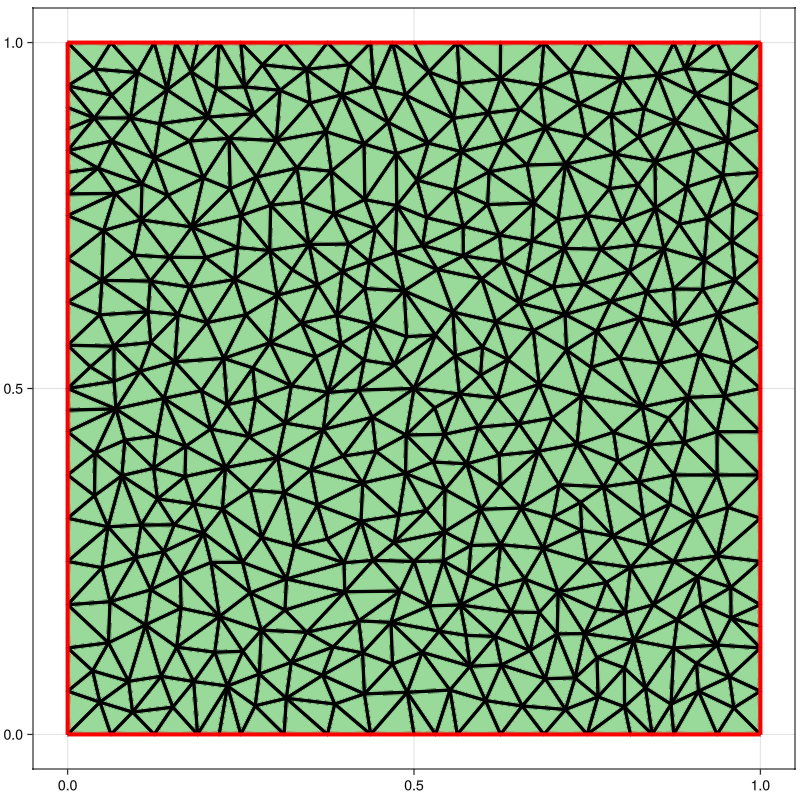}}
    \caption{Example~\ref{ex:2d}. The first three computational grids.} \label{fig:2d_grids}
\end{figure}

\subsection{A two-dimensional  example}\label{ex:2d}

This example is given by $\Omega=(0,1)^2$ and the right-hand side and Dirichlet boundary conditions are 
set such that 
\[
    \vecb{u}(x,y)  := \begin{pmatrix} - \partial_y \xi\\  \partial_x \xi\end{pmatrix}
    \quad \text{where} \quad \xi = -\sin(2\pi x) \cos(2 \pi y), \quad
	p  := \frac{\cos(4 \pi x) - \cos(4 \pi y)}4
\]
satisfy the Stokes problem for $\nu=1$ or $\nu=10^{-6}$, respectively.

Simulations were performed on a sequence of unstructured grids with decreasing mesh width, where
the first three grids are depicted in Figure~\ref{fig:2d_grids}, and for different polynomial degrees, up to $k=4$.
The non-homogeneous Dirichlet
boundary conditions were imposed for the decoupled methods as described in Section~\ref{sec:nonhomo_BC_2d}.
Information on the number of degrees of freedom and the non-zero entries for the different methods are 
provided in Tables~\ref{tab:2d_dof_nnz_sym} and \ref{tab:2d_dof_nnz}. 
In case of the decoupled schemes, the numbers only include the degrees of freedom or nonzero entries
for the velocity problem, the decoupled (and smaller) pressure problem is not included.
As it was a motivation of the construction
of the decoupled methods with divergence-free basis, the number of degrees of freedom is smallest for this method. The corresponding 
numbers for the reduced methods are usually a bit larger and the full mixed methods 
have often around twice as many degrees of freedom than the corresponding decoupled methods. 
The situation is somewhat different with respect to the number of non-zero matrix entries, since these 
numbers are often similar for the decoupled methods and the corresponding reduced mixed methods.
The most non-zero entries are usually required by the full mixed methods. 
 
 \begin{table}
    \caption{\label{tab:2d_dof_nnz_sym}Example~\ref{ex:2d}. Number of  degrees of freedom (ndofs) and number of non-zero sparse matrix entries (nnz)
    of the full system for the decoupled method with divergence-free basis (dfb), the reduced mixed method (red), and the full mixed method (full)
    for different orders $k$ and $\delta = 1$ (skew-symmetric case).}
    \footnotesize
    \begin{center}
    \begin{tabular}{ccccccc}
\toprule
level & ndofs dfb (k=2) & ndofs red (k=2) & ndofs full (k=2) & nnz dfb (k=2) & nnz red (k=2) & nnz full (k=2)\\
\midrule
1 & 196 & 204 & 340 & 4152 & 3954 & 4588\\
2 & 961 & 1040 & 1816 & 25761 & 22990 & 30008\\
3 & 3829 & 4194 & 7426 & 106975 & 95380 & 126734\\
4 & 14744 & 16265 & 29037 & 418918 & 375183 & 500821\\
5 & 57805 & 63997 & 114721 & 1656779 & 1486953 & 1990197\\
\bottomrule
\end{tabular}

    \begin{tabular}{ccccccc}
\toprule
level & ndofs dfb (k=3) & ndofs red (k=3) & ndofs full (k=3) & nnz dfb (k=3) & nnz red (k=3) & nnz full (k=3)\\
\midrule
1 & 382 & 390 & 662 & 12588 & 12182 & 18574\\
2 & 1965 & 2044 & 3596 & 74355 & 68232 & 108096\\
3 & 7945 & 8310 & 14774 & 309561 & 282480 & 450990\\
4 & 30858 & 32379 & 57923 & 1218462 & 1112093 & 1781891\\
5 & 121505 & 127697 & 229145 & 4831089 & 4408413 & 7076425\\
\bottomrule
\end{tabular}

    \begin{tabular}{ccccccc}
\toprule
level & ndofs dfb (k=4) & ndofs red (k=4) & ndofs full (k=4) & nnz dfb (k=4) & nnz red (k=4) & nnz full (k=4)\\
\midrule
1 & 636 & 644 & 1086 & 29958 & 28314 & 43886\\
2 & 3357 & 3436 & 5958 & 171319 & 159246 & 253858\\
3 & 13677 & 14042 & 24546 & 711305 & 660364 & 1058620\\
4 & 53358 & 54879 & 96388 & 2801492 & 2602693 & 4183050\\
\bottomrule
\end{tabular}

    \end{center}
\end{table}

\begin{table}
    \caption{\label{tab:2d_dof_nnz}Example~\ref{ex:2d}. Number of degrees of freedom (ndofs) and number of non-zero sparse matrix entries (nnz)
    of the full system
    for the decoupled method with divergence-free basis (dfb), the reduced mixed method (red), and the full mixed method (full)
    for different orders $k$ and $\delta = -1$ (symmetric case).}
    \footnotesize
    \begin{center}
    \begin{tabular}{ccccccc}
\toprule
level & ndofs dfb (k=1) & ndofs red (k=1) & ndofs full (k=1) & nnz dfb (k=1) & nnz red (k=1) & nnz full (k=1)\\
\midrule
1 & 78 & 86 & 145 & 980 & 676 & 853\\
2 & 345 & 424 & 732 & 6201 & 4404 & 5328\\
3 & 1329 & 1694 & 2944 & 25847 & 18566 & 22316\\
4 & 5016 & 6537 & 11401 & 101328 & 73393 & 87985\\
5 & 19467 & 25659 & 44828 & 400971 & 291609 & 349116\\
6 & 77094 & 102213 & 178727 & 1603802 & 1168725 & 1398267\\
7 & 306741 & 407826 & 713404 & 6411081 & 4676514 & 5593248\\
\bottomrule
\end{tabular}

    \begin{tabular}{ccccccc}
\toprule
level & ndofs dfb (k=2) & ndofs red (k=2) & ndofs full (k=2) & nnz dfb (k=2) & nnz red (k=2) & nnz full (k=2)\\
\midrule
1 & 196 & 204 & 340 & 4564 & 3994 & 4656\\
2 & 961 & 1040 & 1816 & 26597 & 23008 & 30396\\
3 & 3829 & 4194 & 7426 & 109871 & 95382 & 128350\\
4 & 14744 & 16265 & 29037 & 430134 & 375227 & 507207\\
5 & 57805 & 63997 & 114721 & 1700653 & 1486965 & 2015559\\
\bottomrule
\end{tabular}

    \begin{tabular}{ccccccc}
\toprule
level & ndofs dfb (k=3) & ndofs red (k=3) & ndofs full (k=3) & nnz dfb (k=3) & nnz red (k=3) & nnz full (k=3)\\
\midrule
1 & 382 & 390 & 662 & 13058 & 12182 & 18778\\
2 & 1965 & 2044 & 3596 & 75469 & 68232 & 109260\\
3 & 7945 & 8310 & 14774 & 313051 & 282480 & 455838\\
4 & 30858 & 32379 & 57923 & 1230714 & 1112093 & 1801049\\
5 & 121505 & 127697 & 229145 & 4875581 & 4408413 & 7152511\\
\bottomrule
\end{tabular}

    \begin{tabular}{ccccccc}
\toprule
level & ndofs dfb (k=4) & ndofs red (k=4) & ndofs full (k=4) & nnz dfb (k=4) & nnz red (k=4) & nnz full (k=4)\\
\midrule
1 & 636 & 644 & 1086 & 30100 & 28314 & 44294\\
2 & 3357 & 3436 & 5958 & 171949 & 159246 & 256186\\
3 & 13677 & 14042 & 24546 & 713821 & 660364 & 1068316\\
4 & 53358 & 54879 & 96388 & 2811244 & 2602693 & 4221366\\
\bottomrule
\end{tabular}

    \end{center}
\end{table}

\begin{figure}[t!]
\centerline{
    \includegraphics[width=0.3\textwidth]{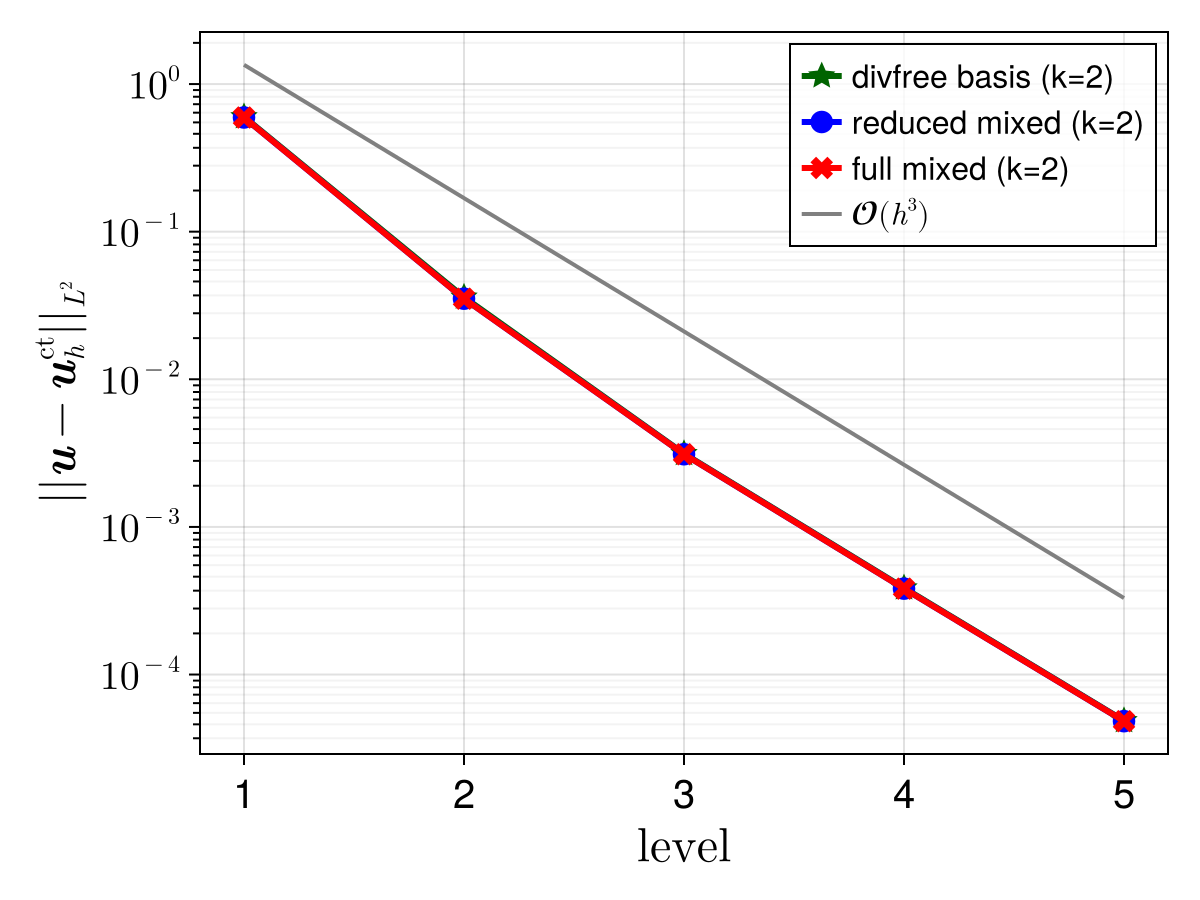}
    \includegraphics[width=0.3\textwidth]{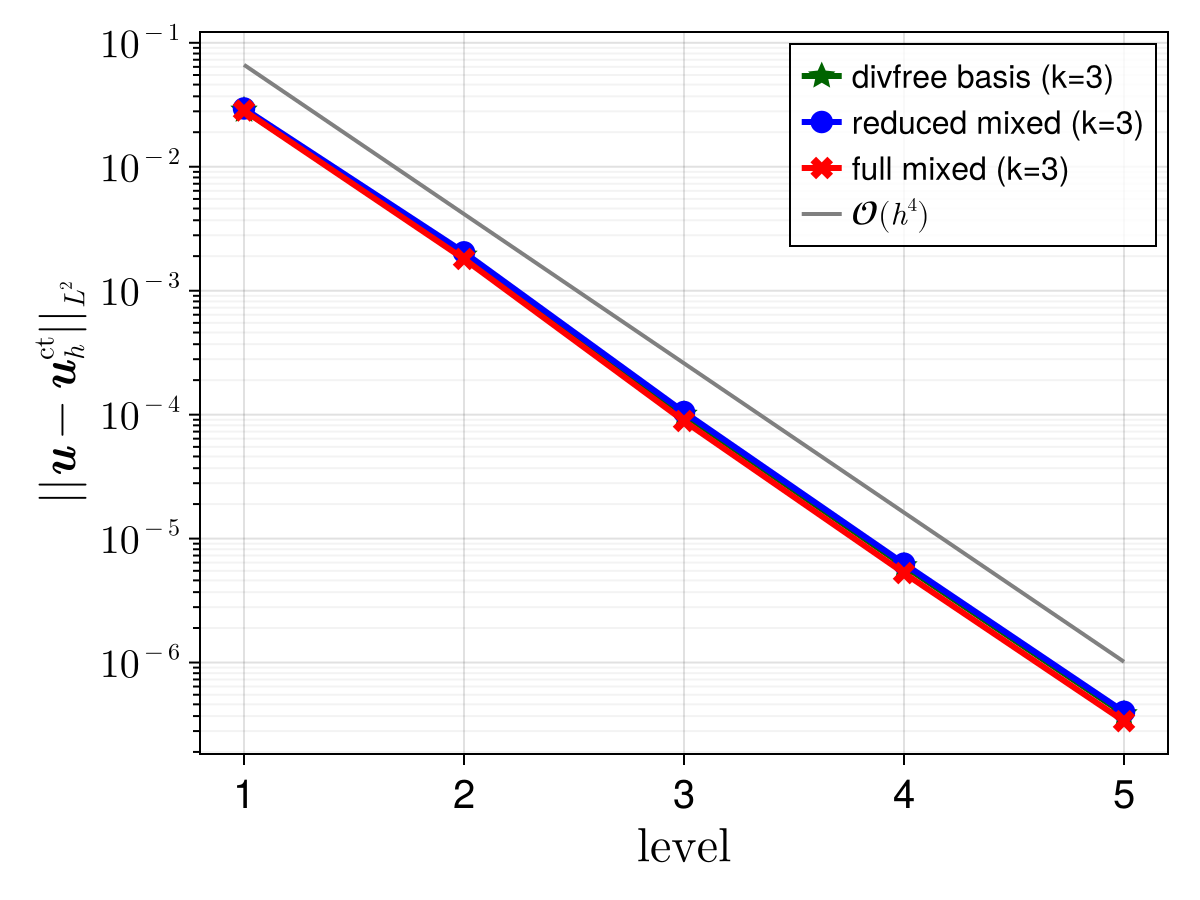}
    \includegraphics[width=0.3\textwidth]{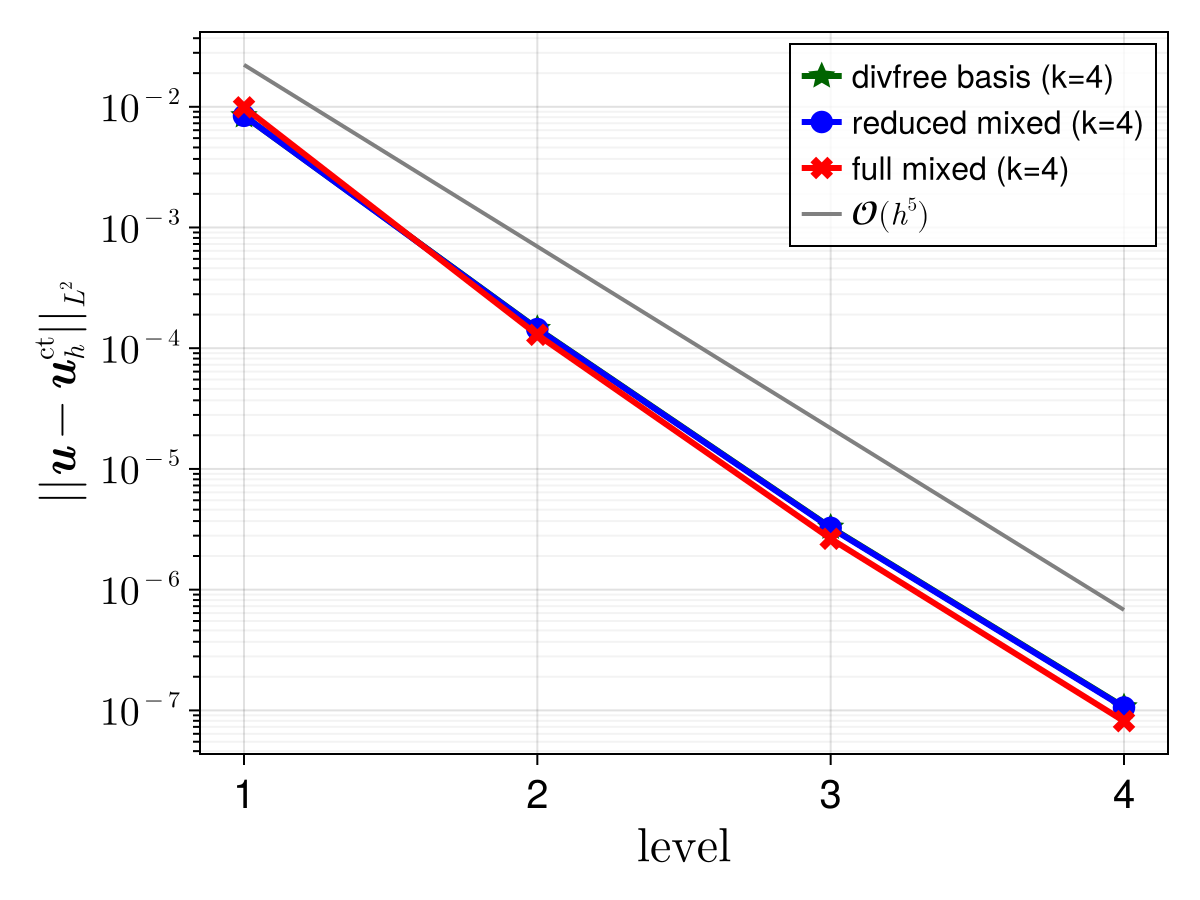}}
\centerline{
    \includegraphics[width=0.3\textwidth]{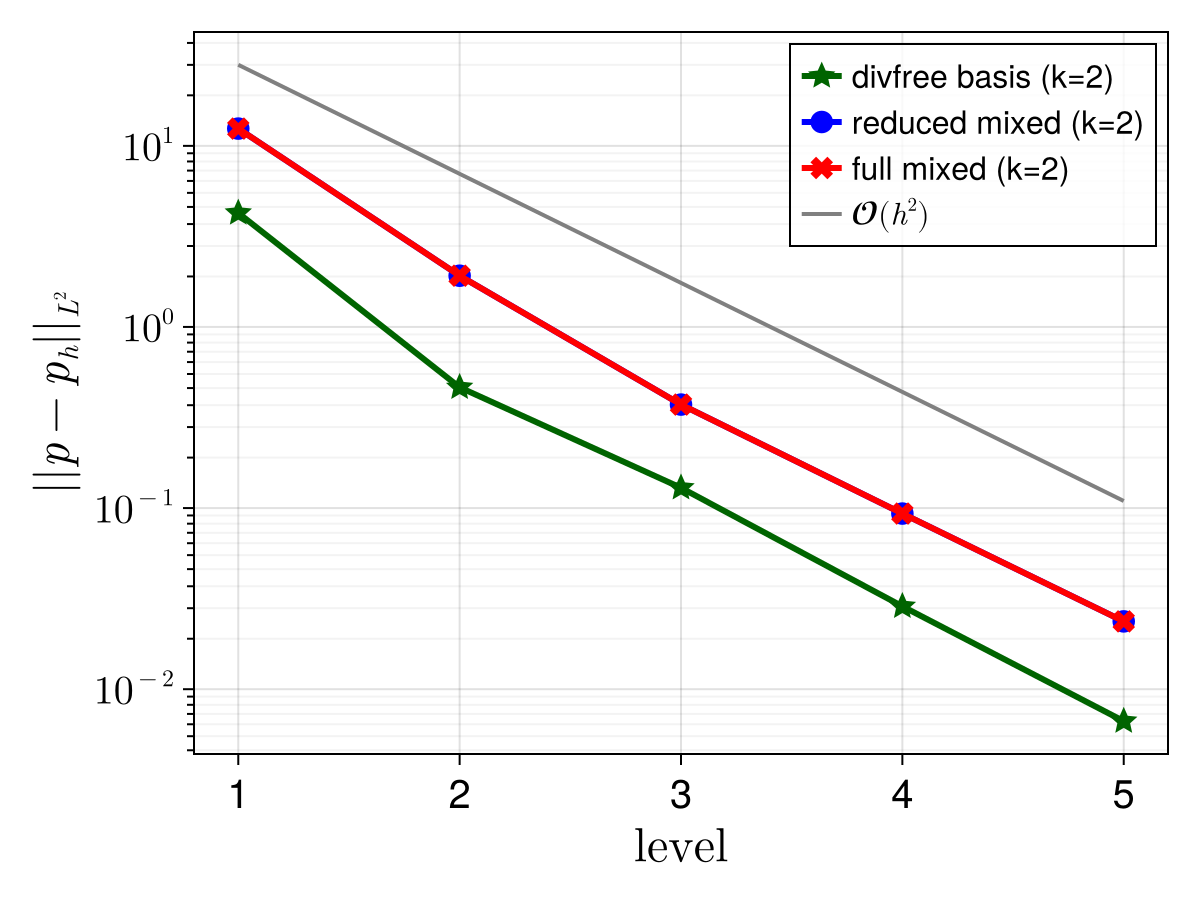}
    \includegraphics[width=0.3\textwidth]{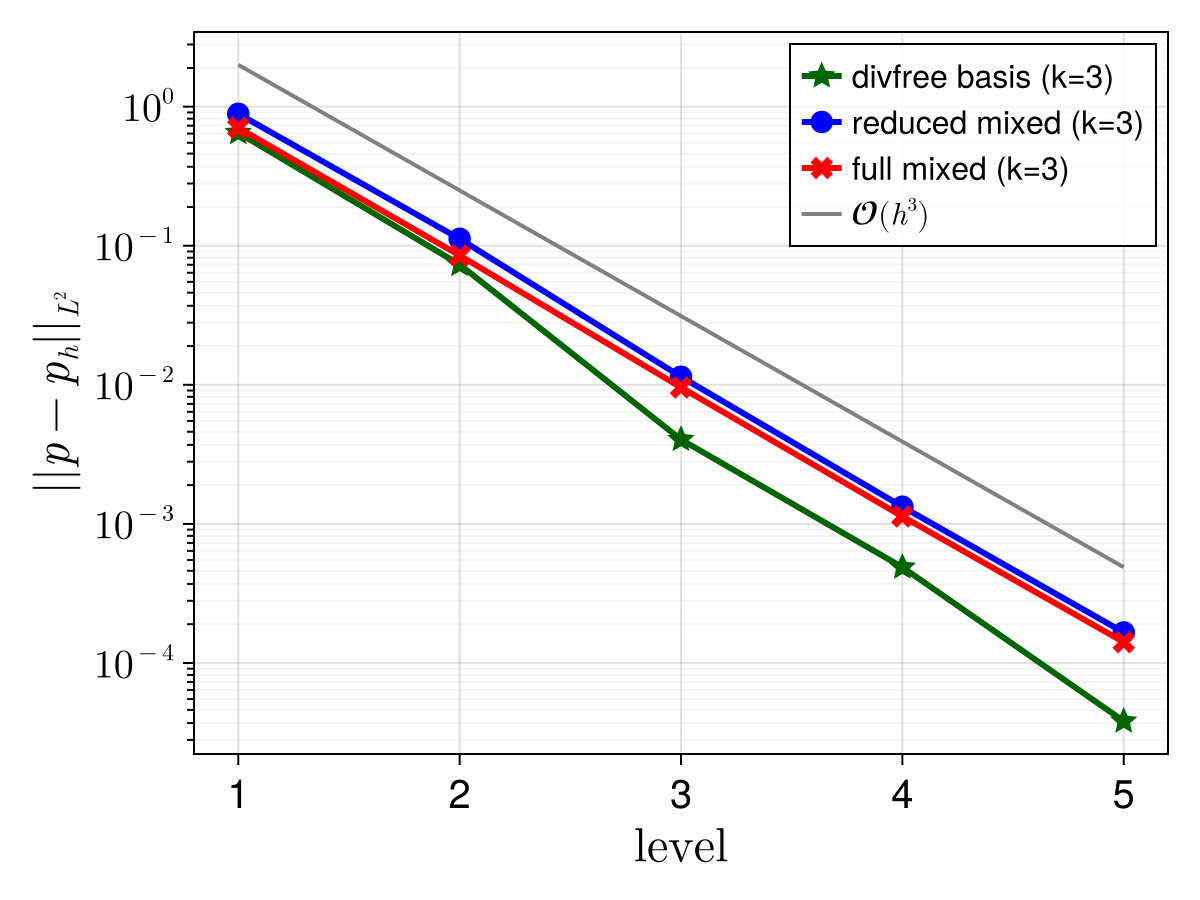}
    \includegraphics[width=0.3\textwidth]{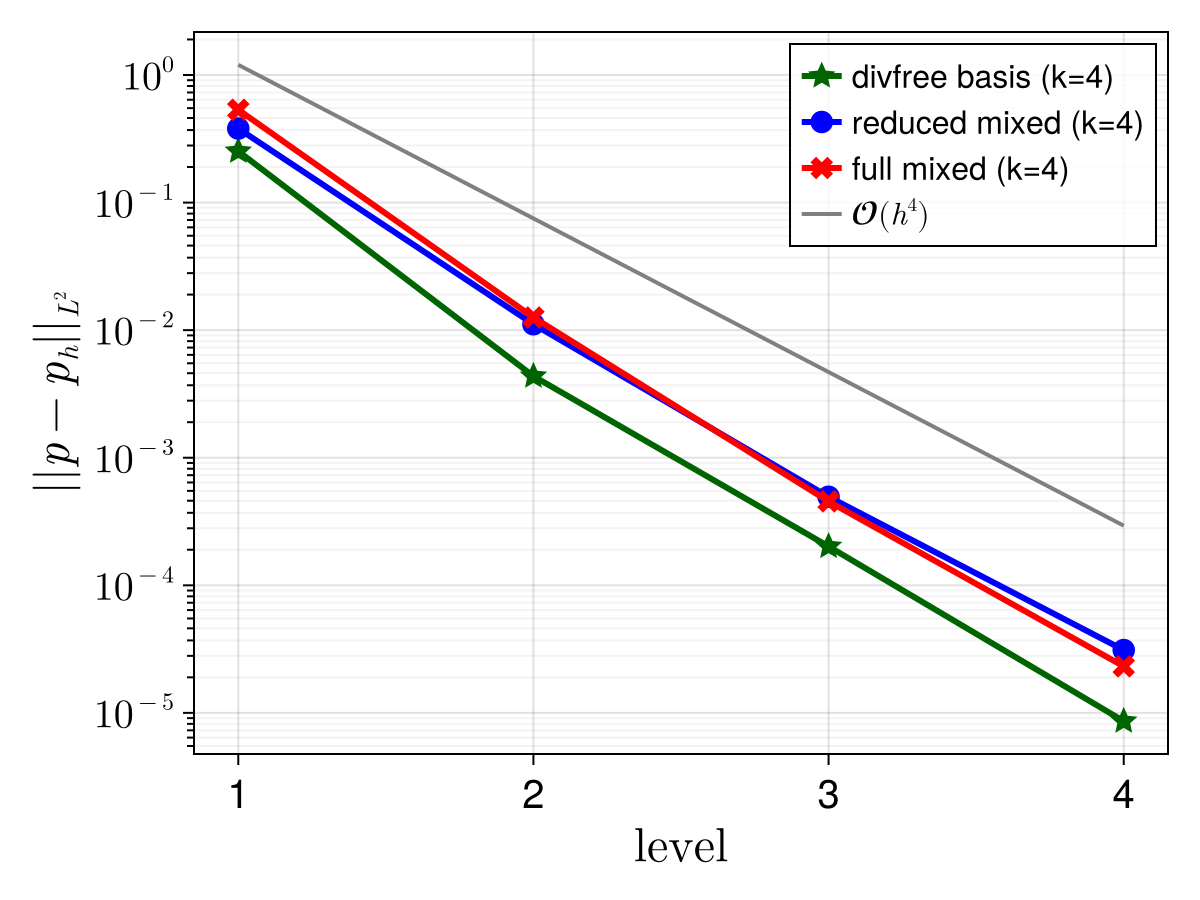}}
    \caption{Example~\ref{ex:2d} with $\nu = 1$. $L^2(\Omega)$ velocity error (top) and pressure error (bottom) for the polynomial degree $k=2$ (left), $k=3$ (center), and $k=4$ (right), $\delta = 1$ (skew-symmetric case).} \label{fig:2d_conv_nonsym}
\end{figure}

\begin{figure}[t!]
\centerline{
    \includegraphics[width=0.3\textwidth]{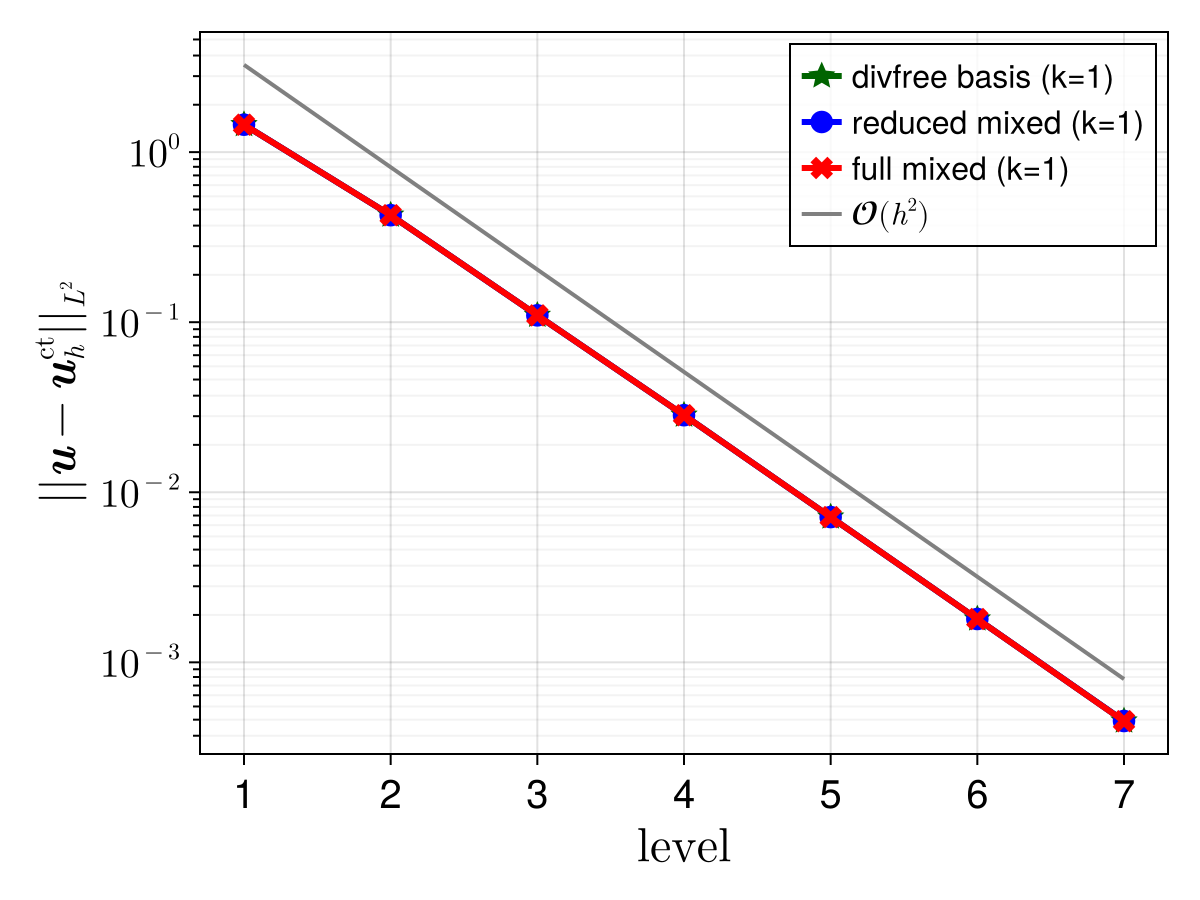}
    \includegraphics[width=0.3\textwidth]{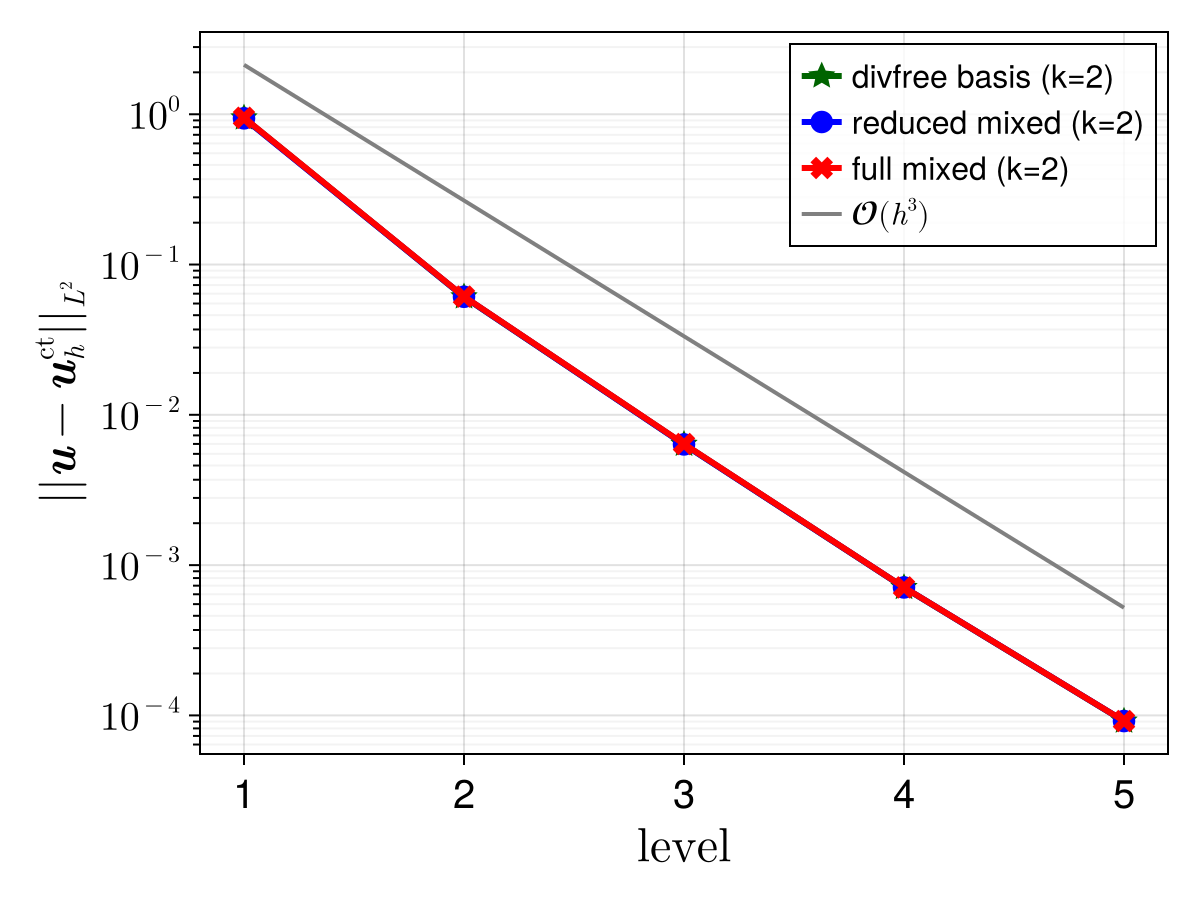}
    \includegraphics[width=0.3\textwidth]{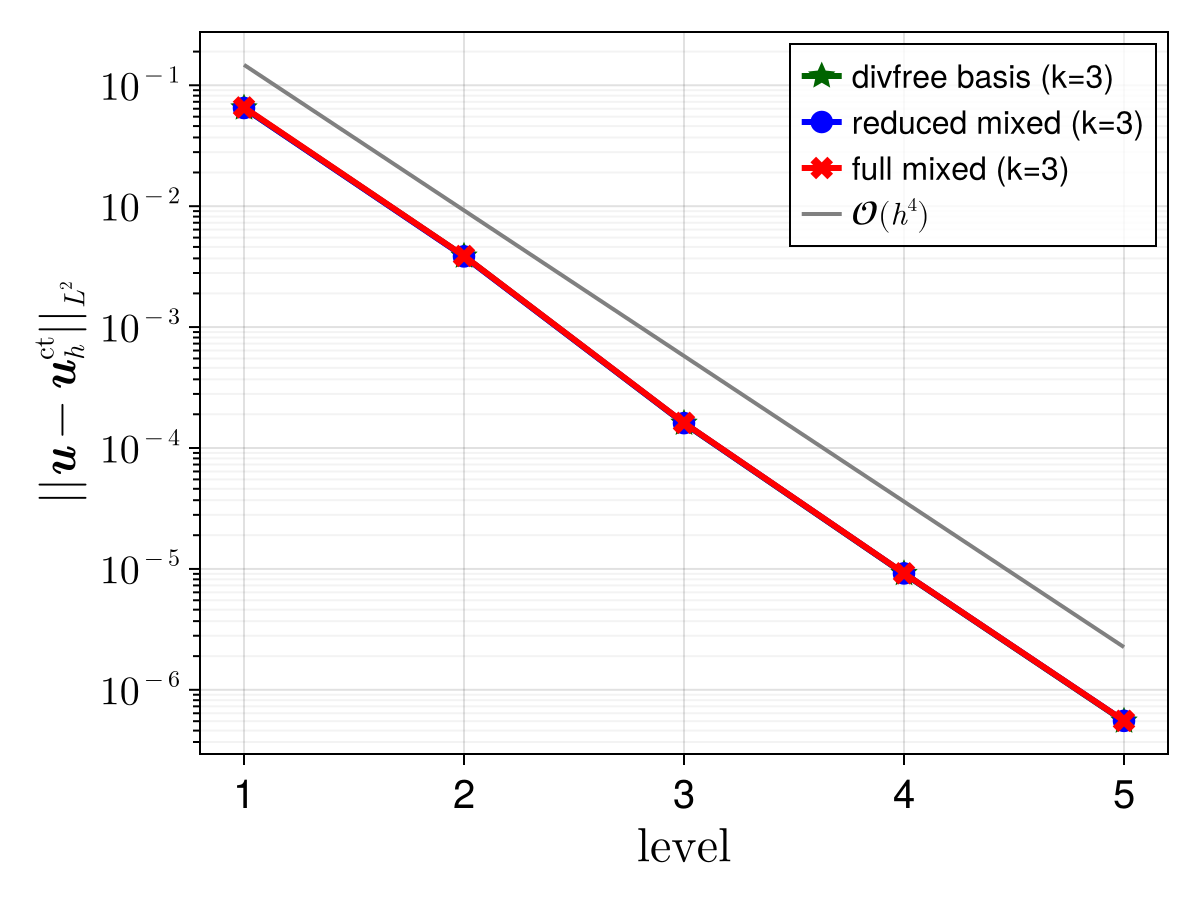}}
\centerline{
    \includegraphics[width=0.3\textwidth]{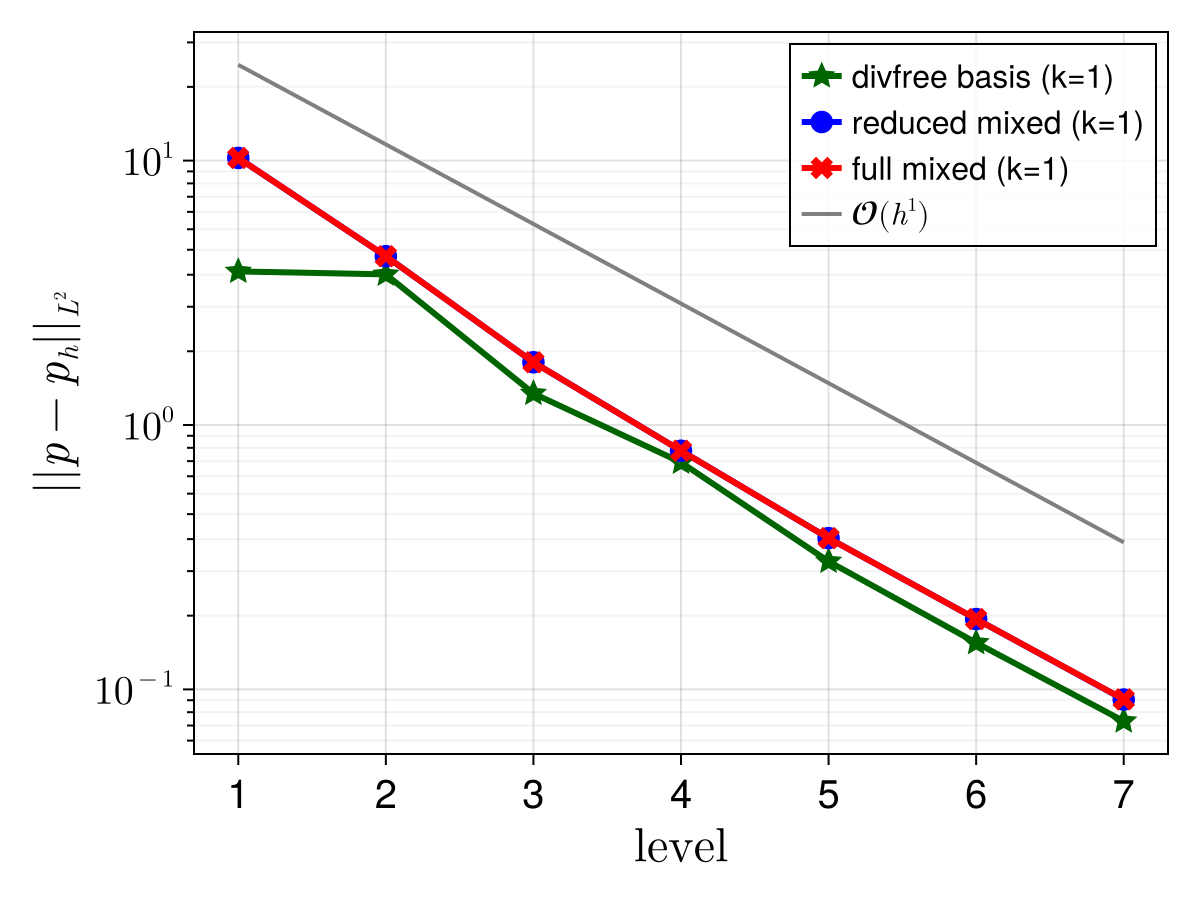}
    \includegraphics[width=0.3\textwidth]{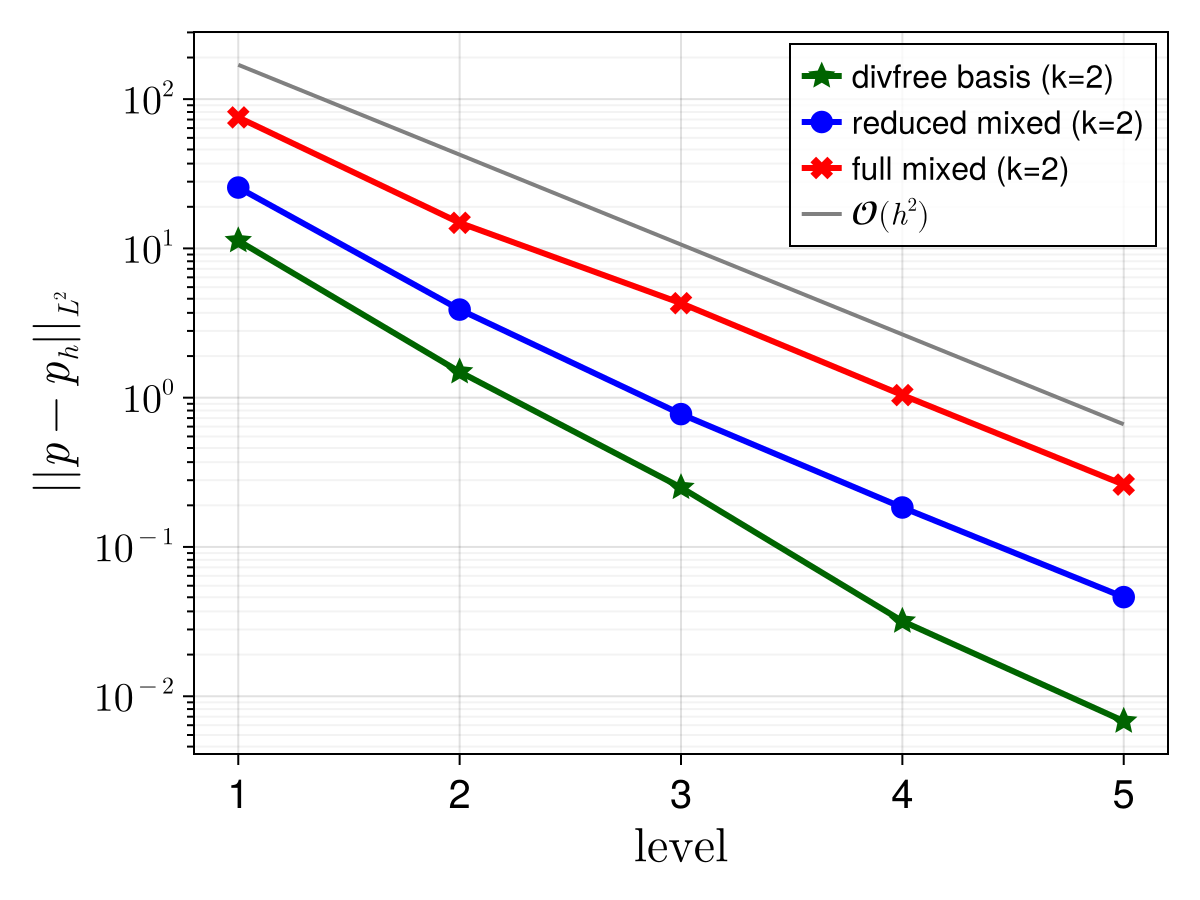}
    \includegraphics[width=0.3\textwidth]{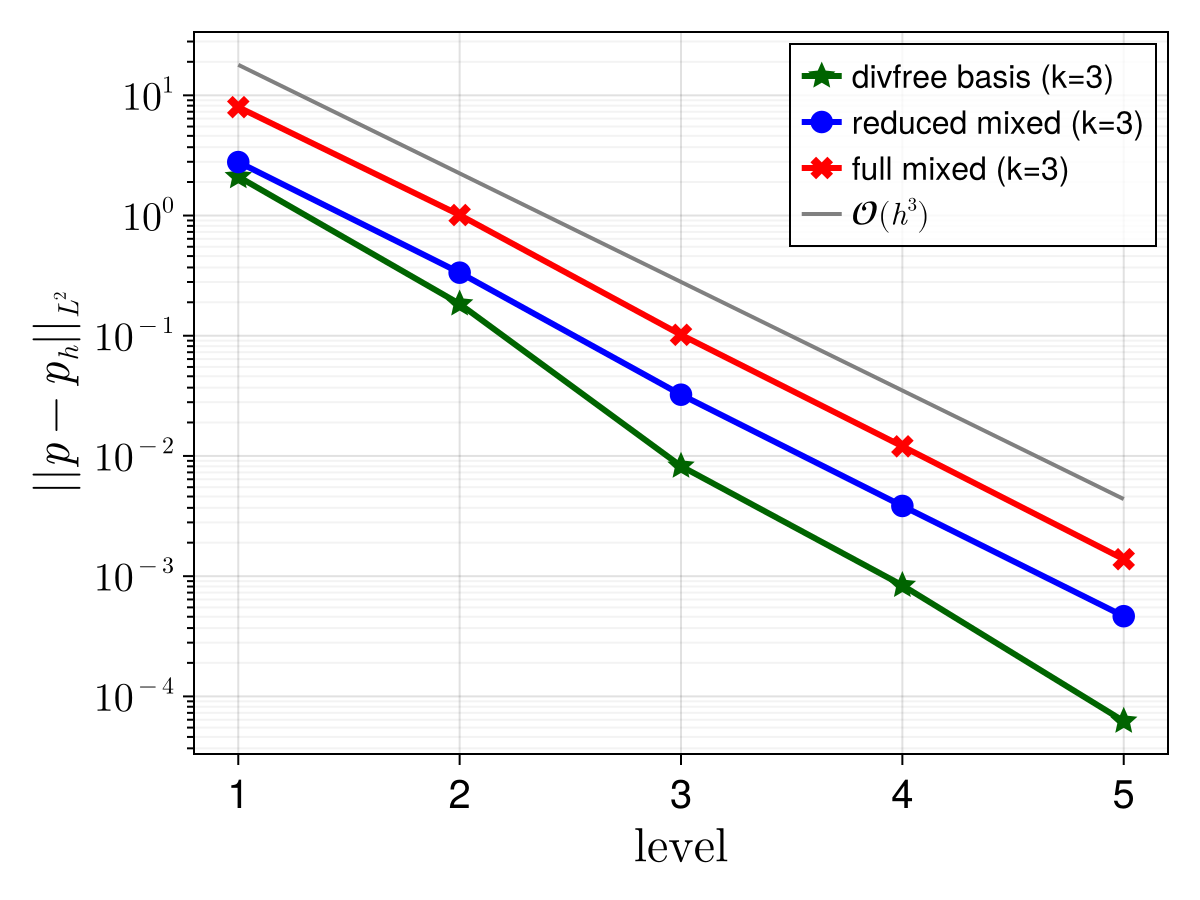}}
    \caption{Example~\ref{ex:2d} with $\nu = 1$. $L^2(\Omega)$ velocity error (top) and pressure error (bottom) for the polynomial degree $k=1$ (left), $k=2$ (center), and $k=3$ (right), $\delta=-1$ (symmetric case).} \label{fig:2d_conv_sym}
\end{figure}

\begin{figure}[t!]
\centerline{ 
    \includegraphics[width=0.3\textwidth]{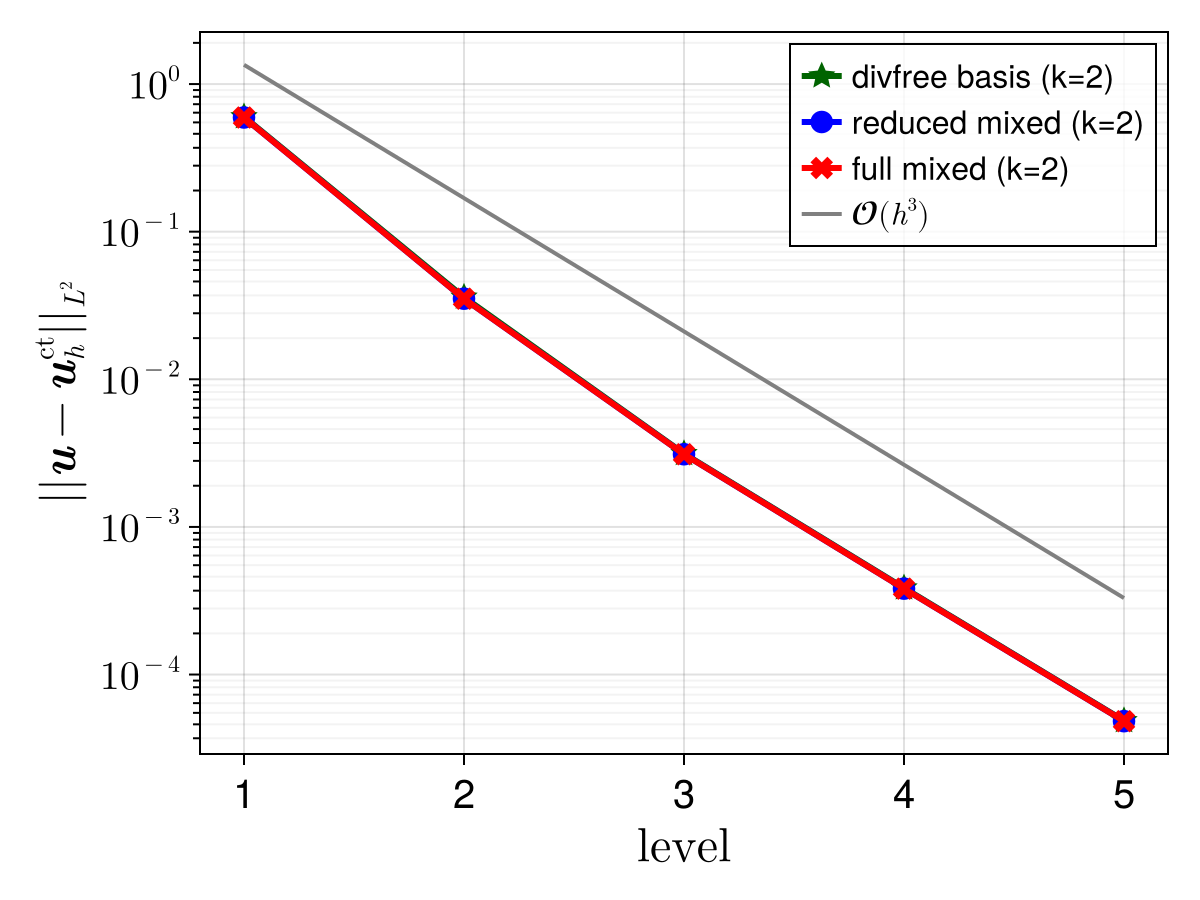}
    \includegraphics[width=0.3\textwidth]{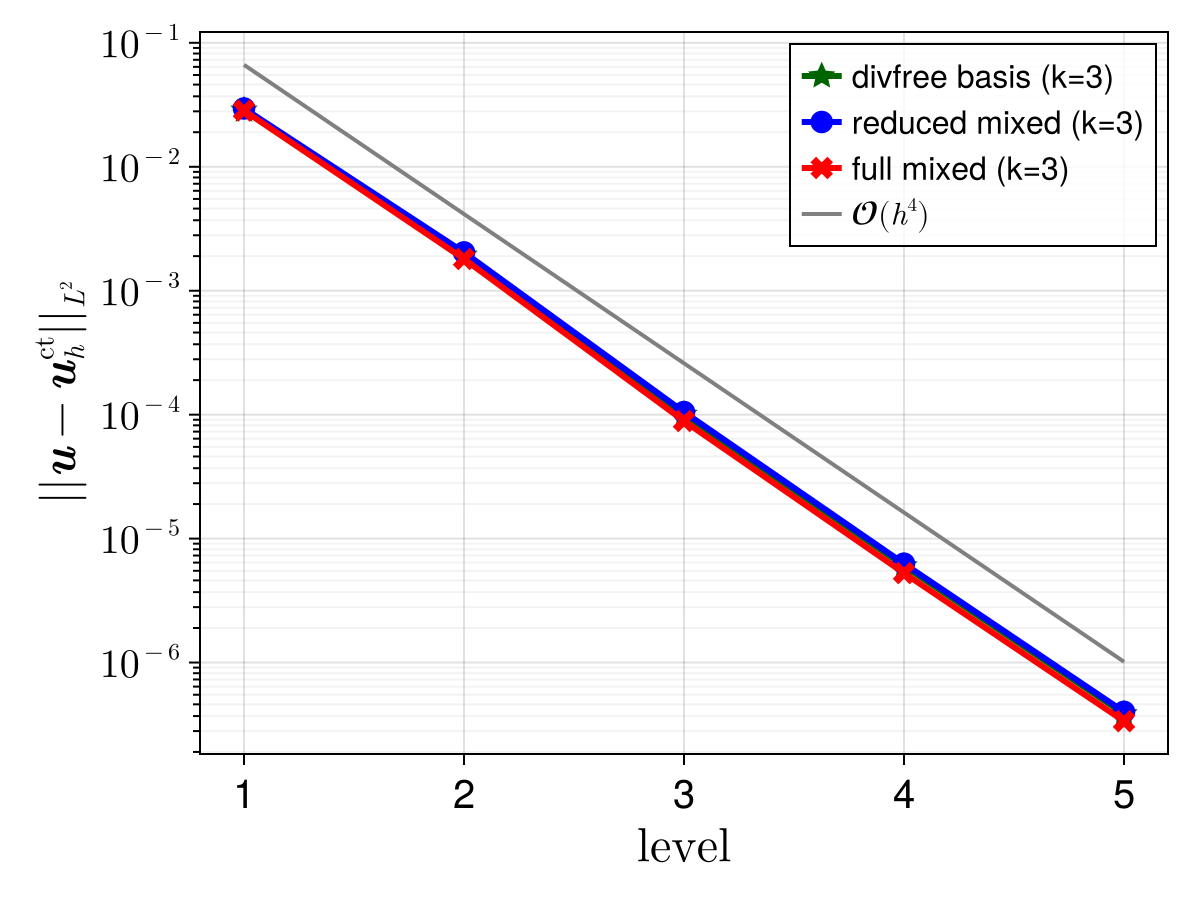}
    \includegraphics[width=0.3\textwidth]{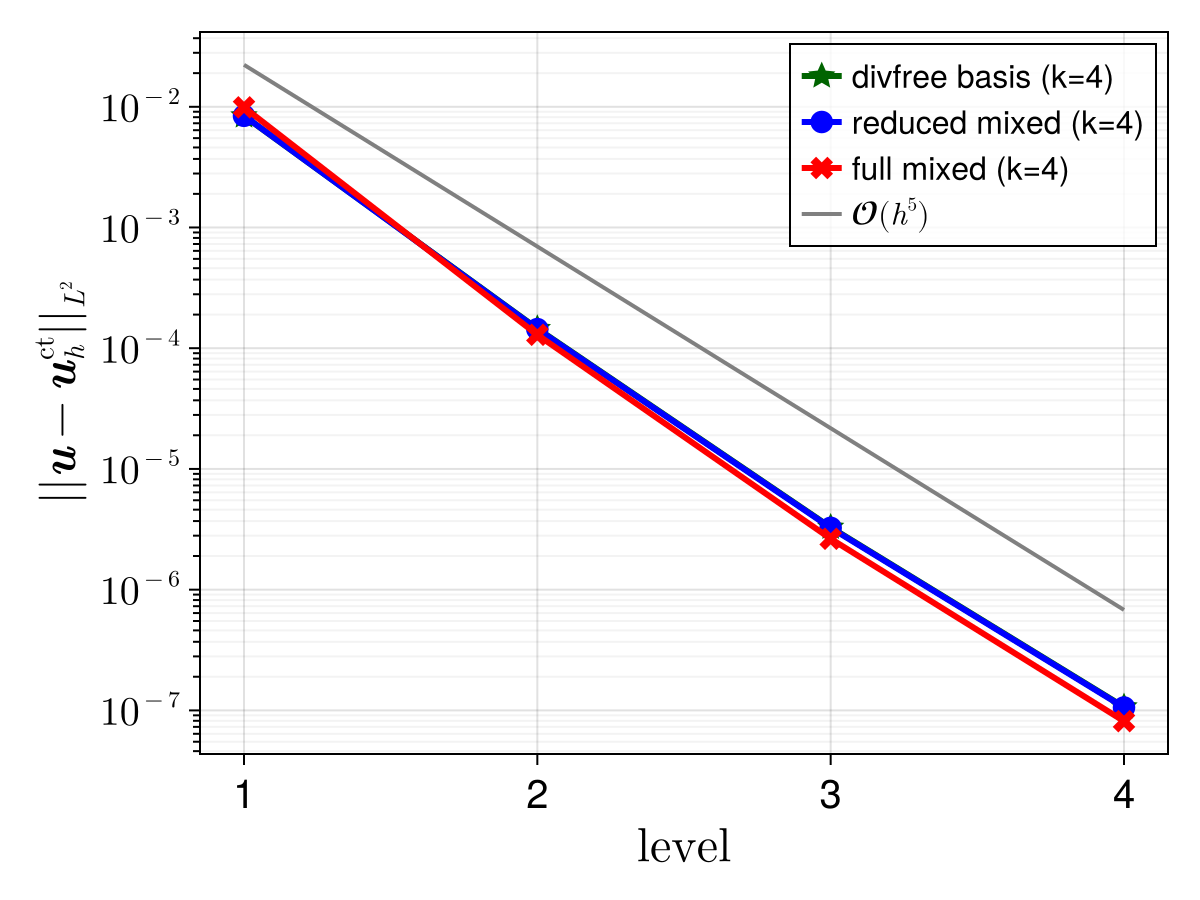}}
\centerline{
    \includegraphics[width=0.3\textwidth]{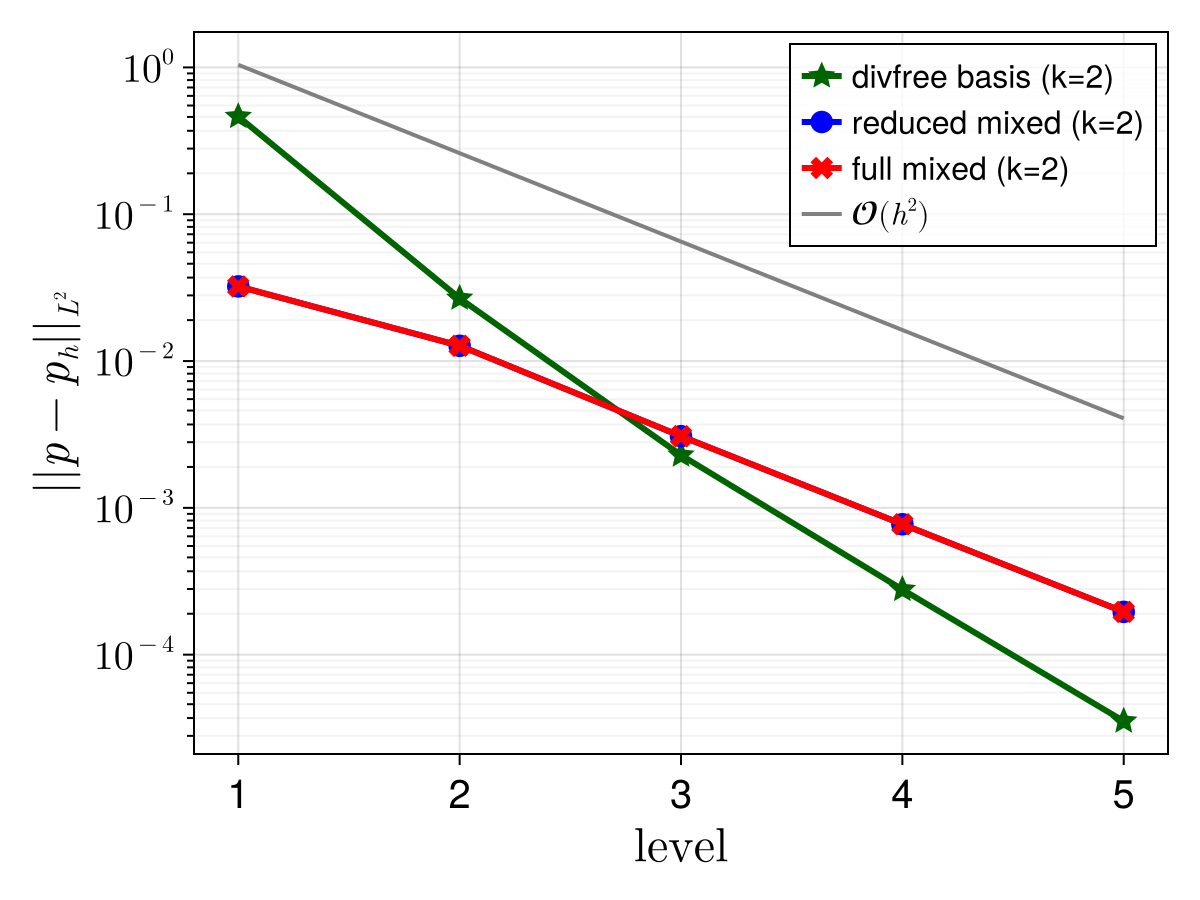}
    \includegraphics[width=0.3\textwidth]{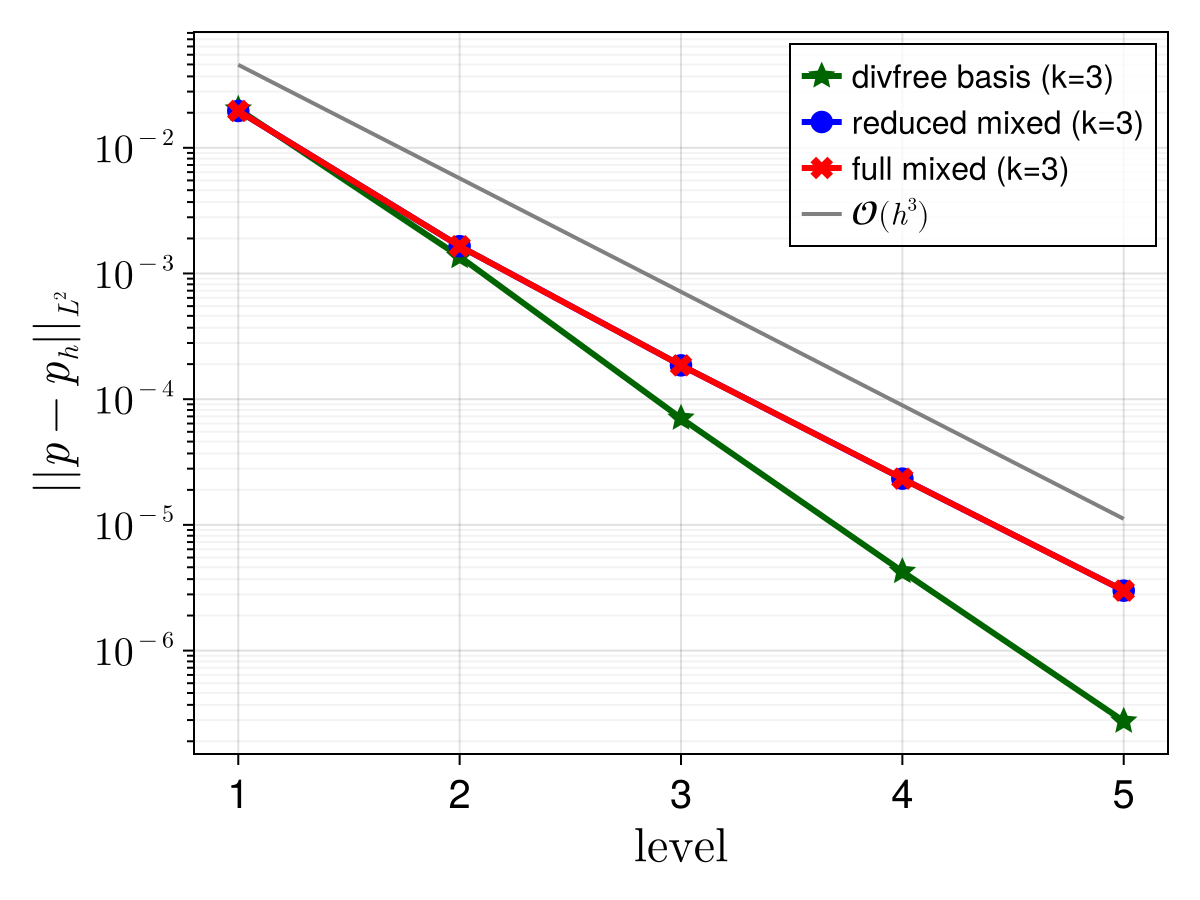}
    \includegraphics[width=0.3\textwidth]{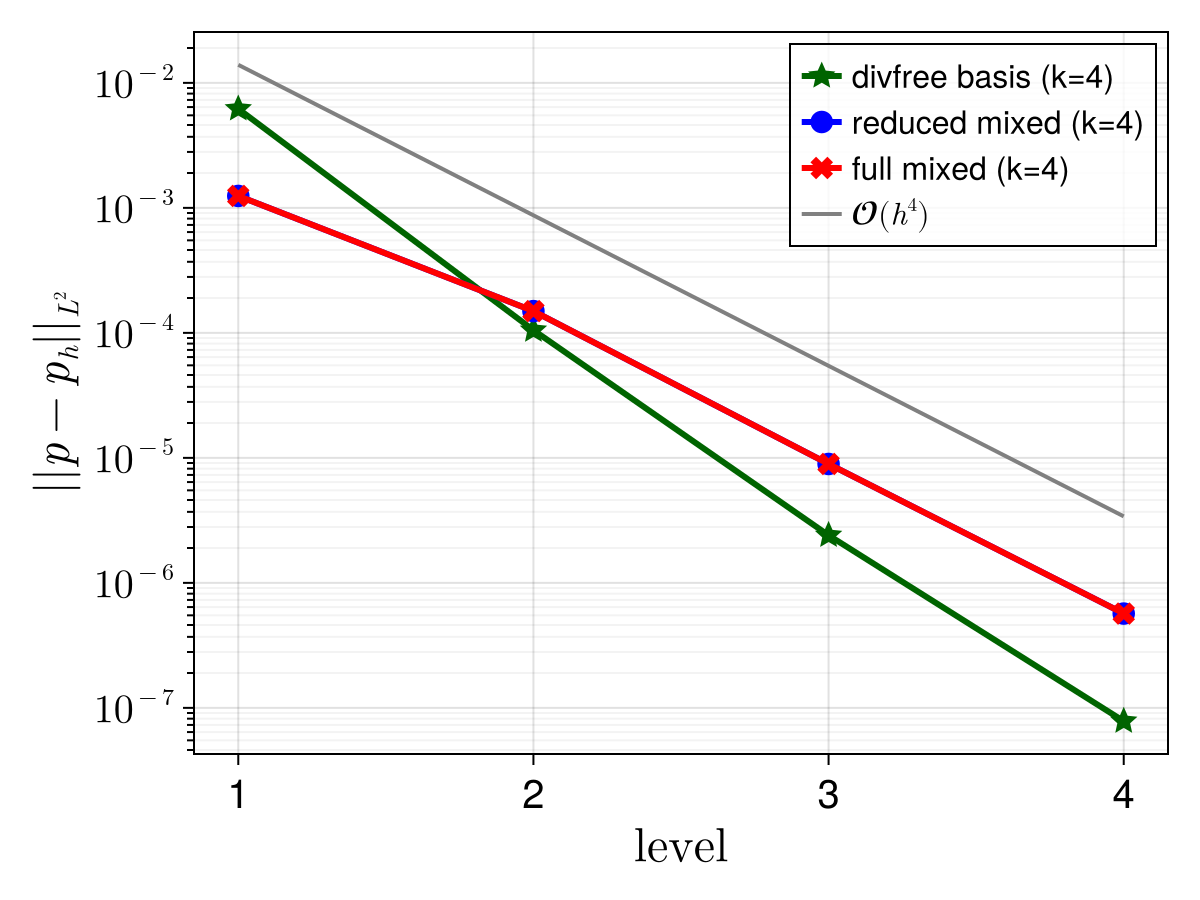}}
    \caption{Example~\ref{ex:2d} with $\nu = 10^{-6}$. $L^2(\Omega)$ velocity error (top) and pressure error (bottom) for the polynomial degree $k=1$ (left), $k=2$ (center), and $k=3$ (right), $\delta = 1$ (skew-symmetric case).} \label{fig:2d_conv_nonsym_small_nu}
\end{figure}

\begin{figure}[t!]
\centerline{
    \includegraphics[width=0.3\textwidth]{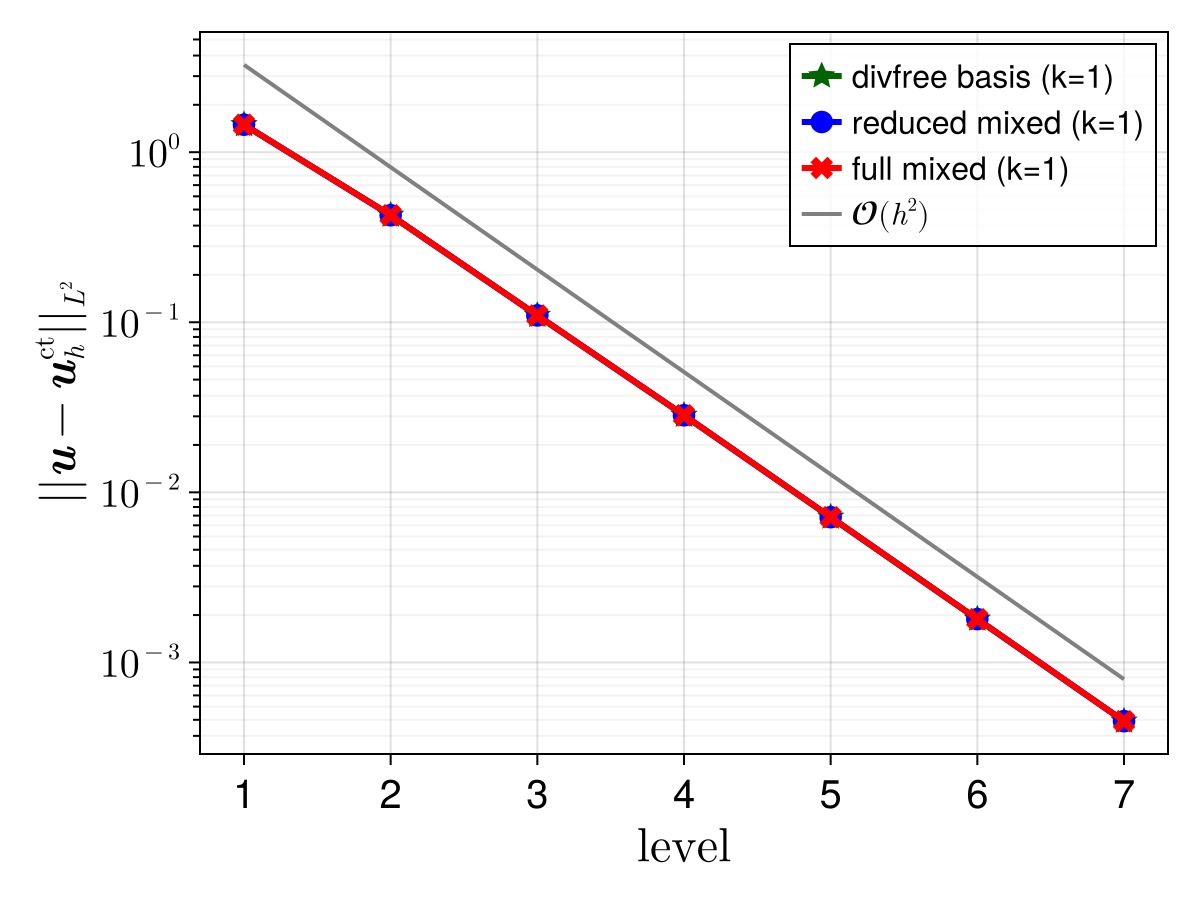}
    \includegraphics[width=0.3\textwidth]{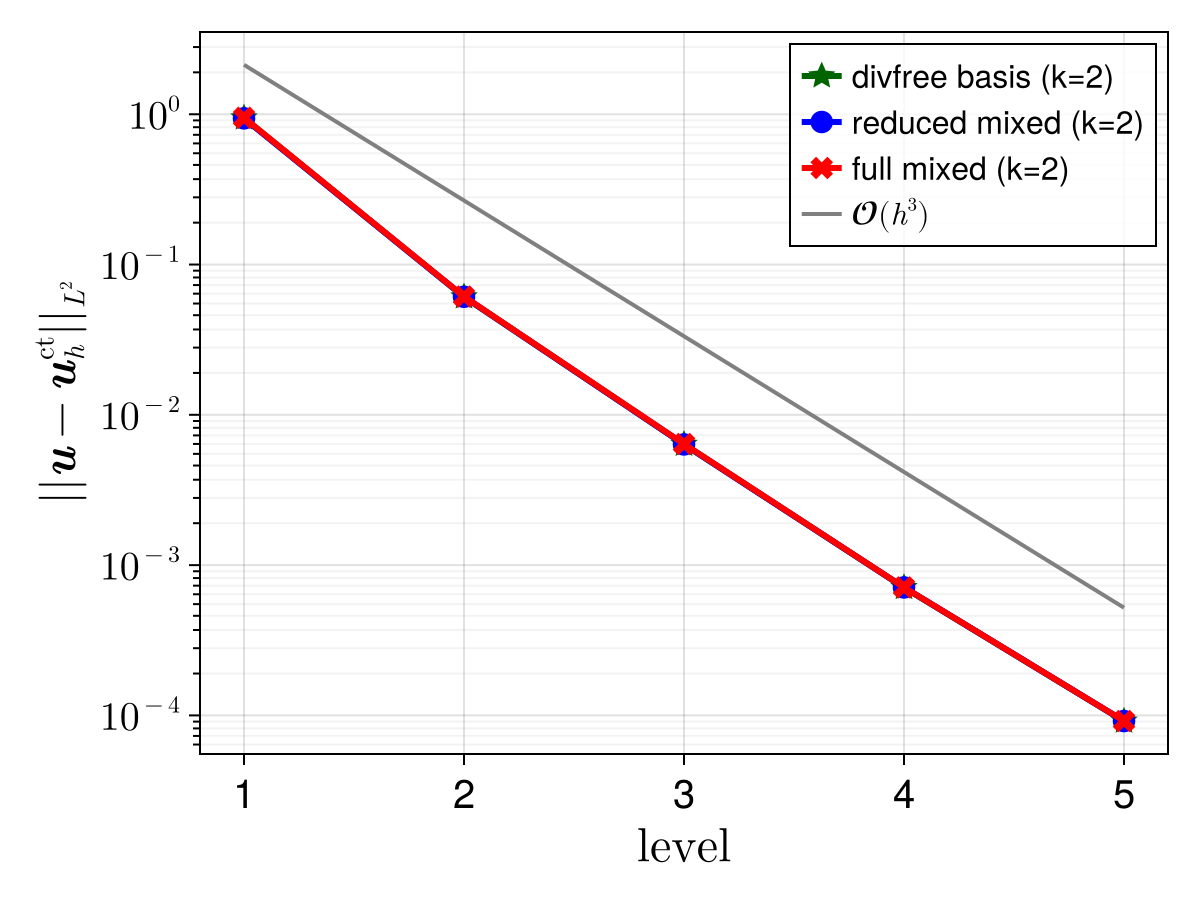}
    \includegraphics[width=0.3\textwidth]{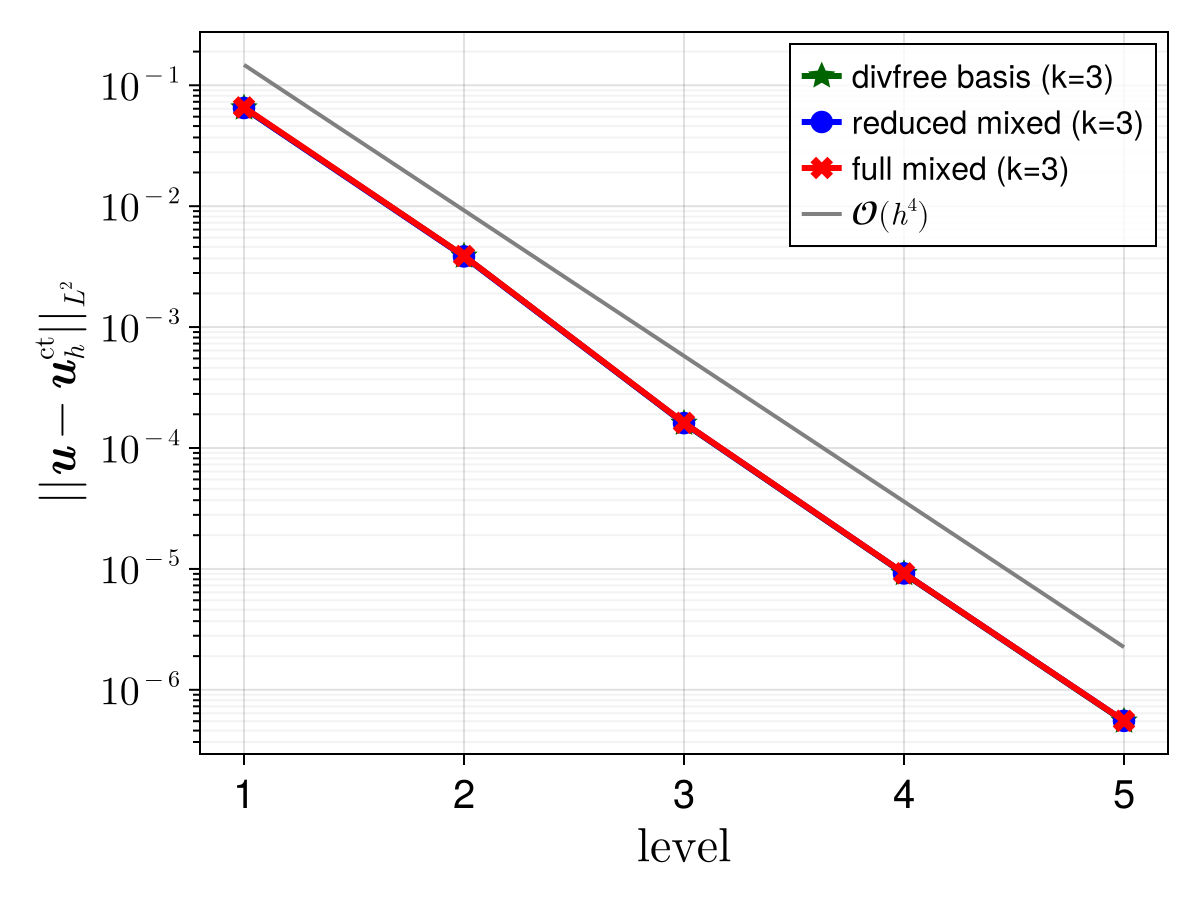}}
\centerline{
    \includegraphics[width=0.3\textwidth]{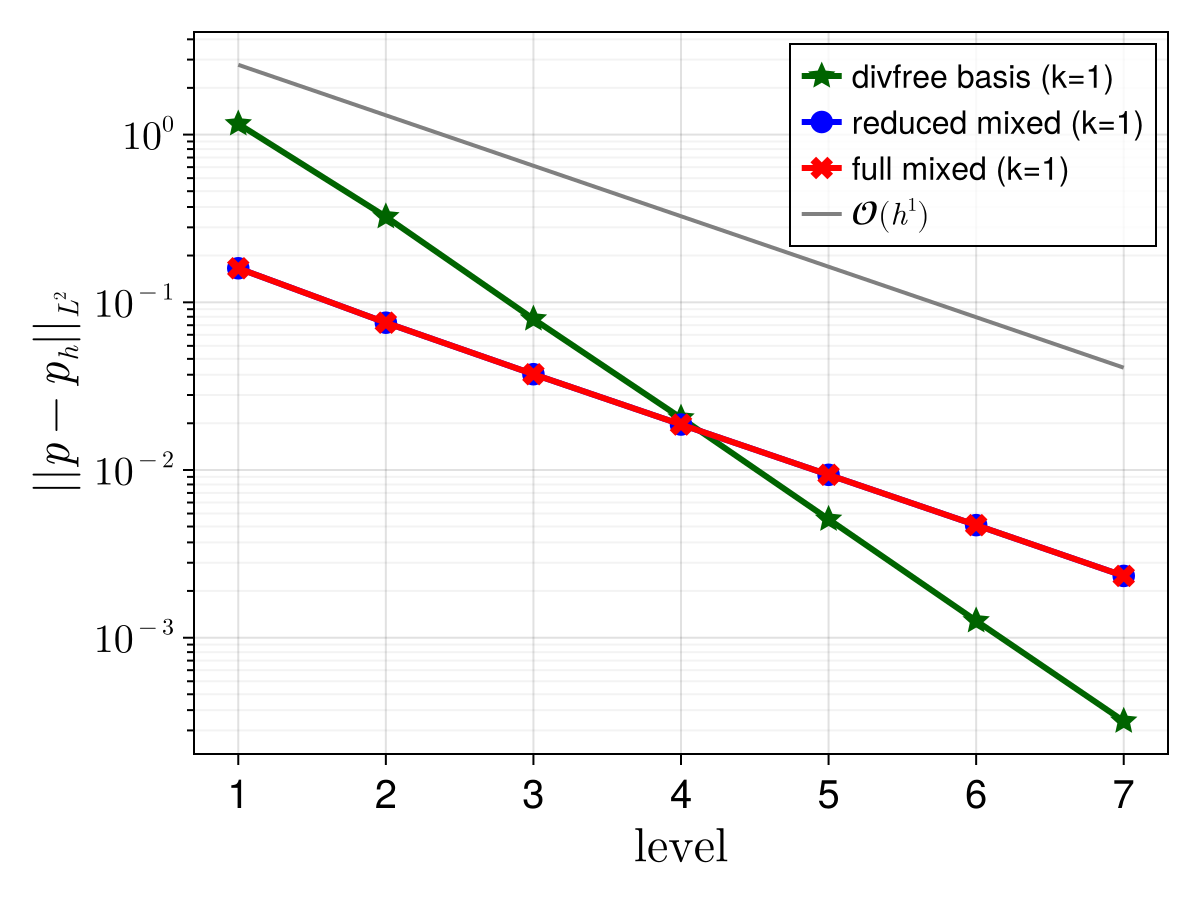}
    \includegraphics[width=0.3\textwidth]{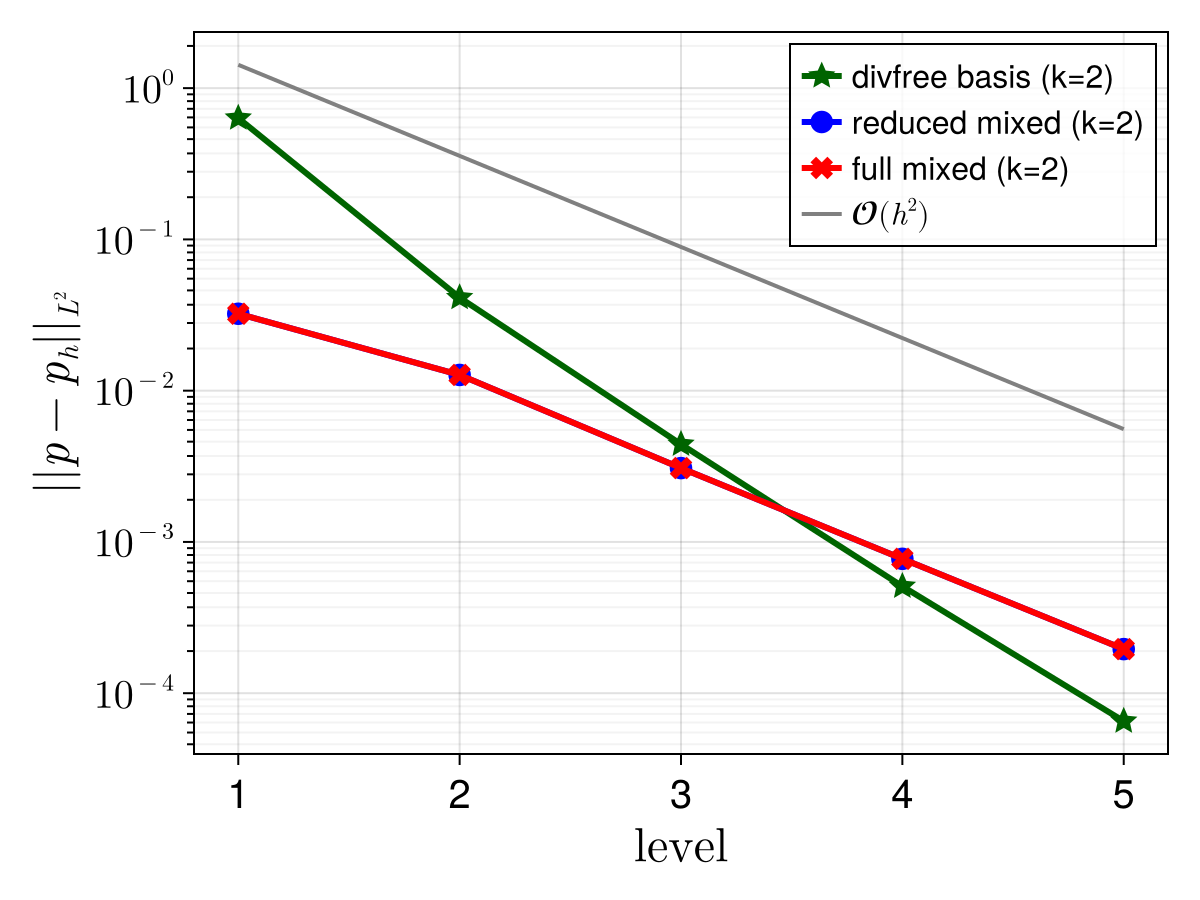}
    \includegraphics[width=0.3\textwidth]{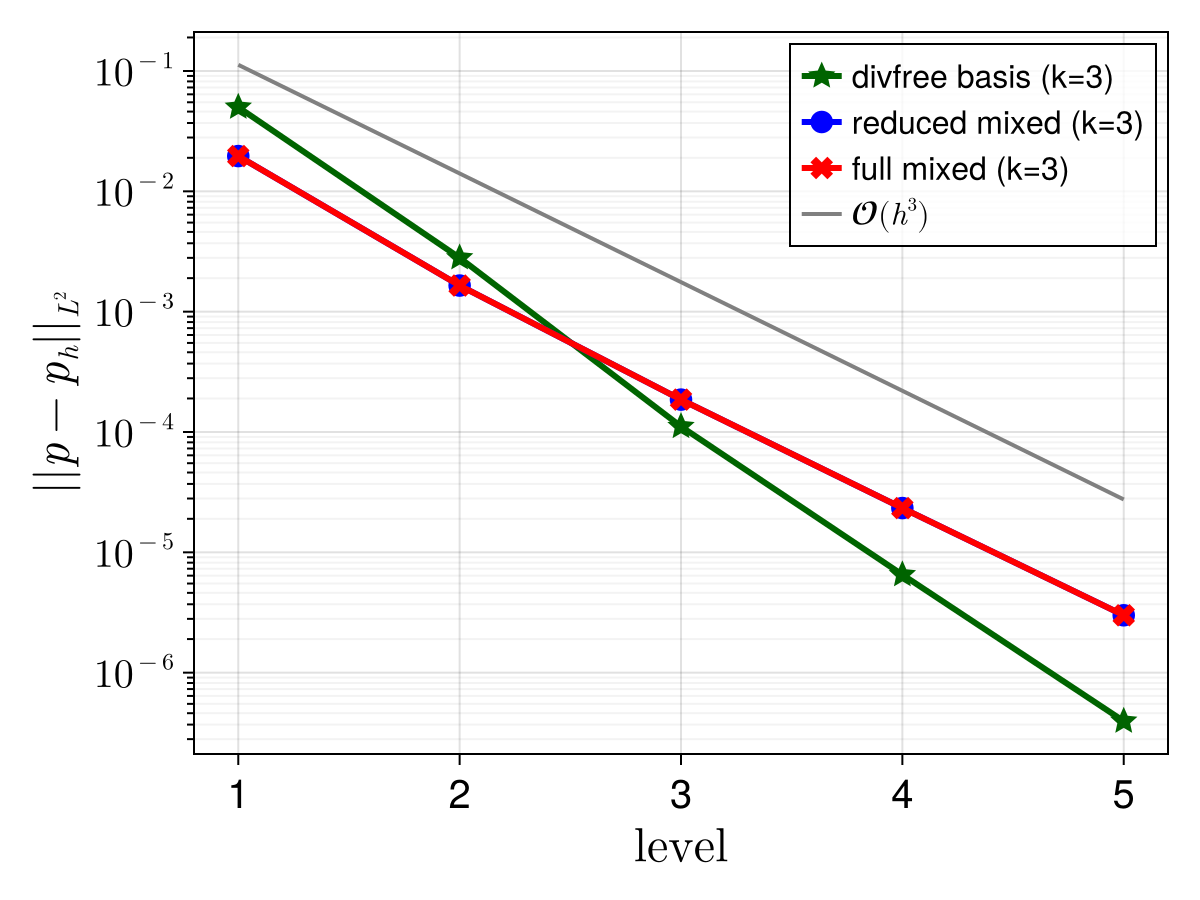}}
    \caption{Example~\ref{ex:2d} with $\nu = 10^{-6}$. $L^2(\Omega)$ velocity error (top) and pressure error (bottom) for the polynomial degree $k=2$ (left), $k=3$ (center), and $k=4$ (right), 
    $\delta=-1$ (symmetric case).} \label{fig:2d_conv_sym_small_nu}
\end{figure}

The convergence of the studied methods in the  large viscosity case $\nu=1$
is illustrated in Figures~\ref{fig:2d_conv_nonsym}
and~\ref{fig:2d_conv_sym} for the skew-symmetric and the symmetric case, respectively.
One can observe that all methods show the expected orders of convergence. The velocity
errors for all the methods are virtually the same.
The proposed pressure reconstruction for the 
decoupled methods leads often to results of superior accuracy compared to those obtained with the 
mixed methods. With respect to the velocity, similar observations can be made in the 
small viscosity case $\nu=10^{-6}$, compare Figures~\ref{fig:2d_conv_nonsym_small_nu} 
and~\ref{fig:2d_conv_sym_small_nu}. 
The velocity errors are of the same order of magnitude as for the large viscosity case. The latter observation 
supports the pressure-robustness of the methods.
Interestingly, the pressure reconstruction used in 
the decoupled methods leads on finer grids consistently to noticeably more accurate 
discrete pressures than those computed with the mixed methods. 

\begin{figure}[t!]
\centerline{
    \includegraphics[width=0.3\textwidth]{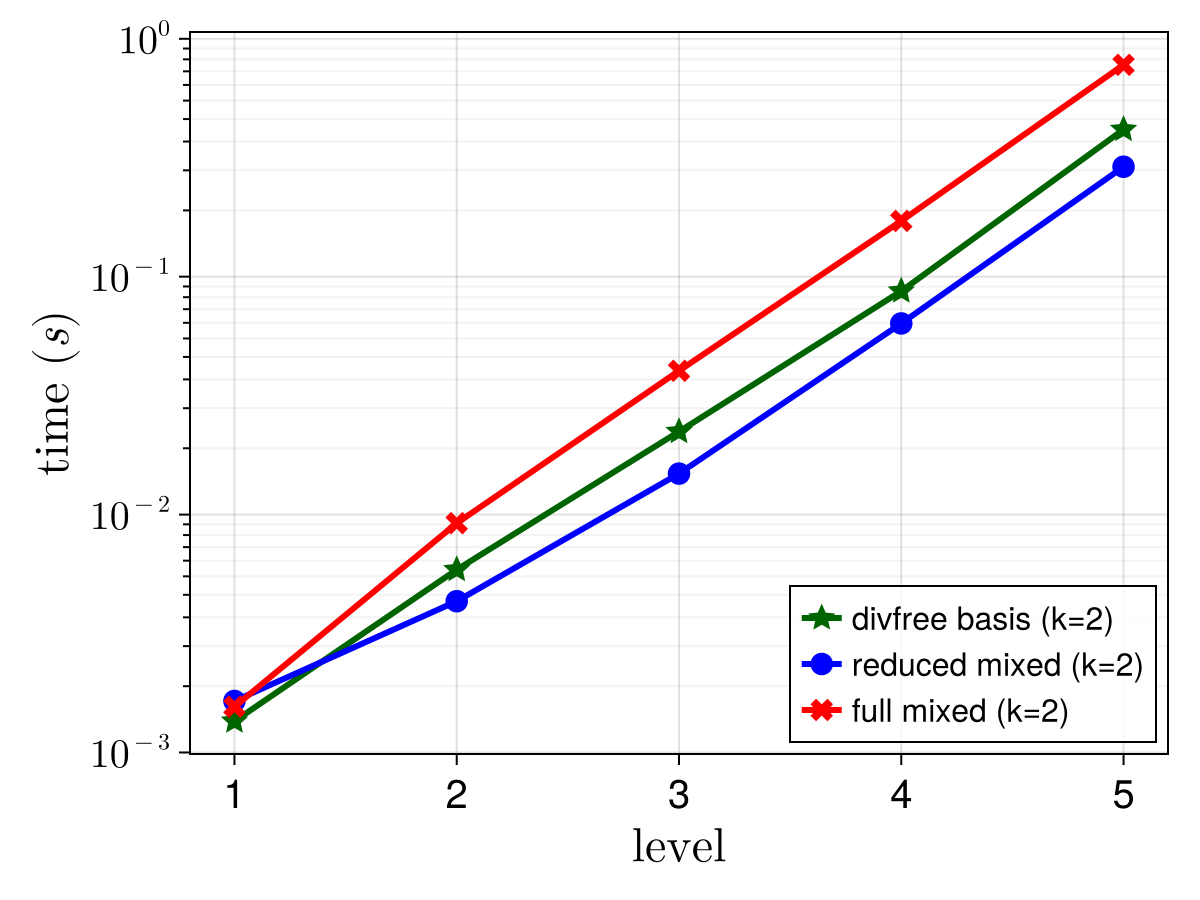}
    \includegraphics[width=0.3\textwidth]{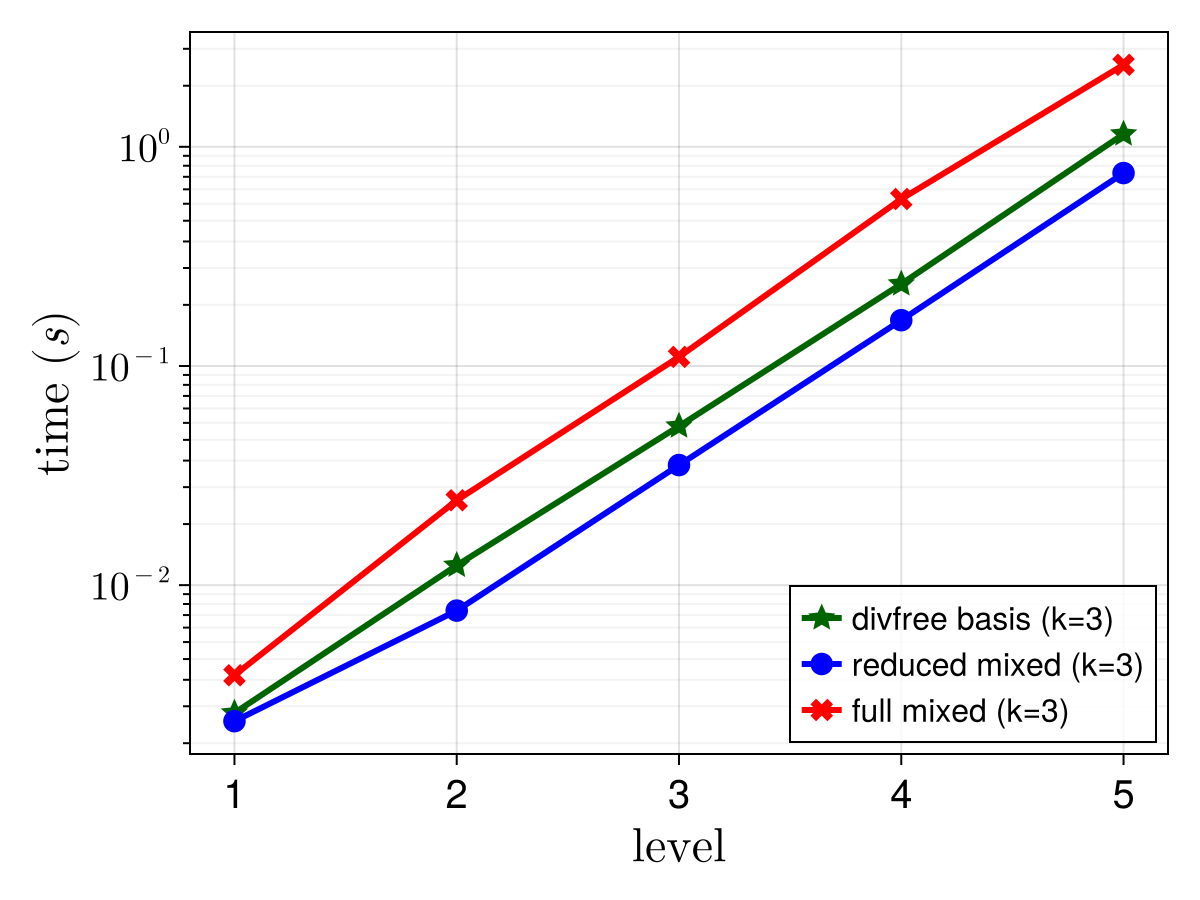}
    \includegraphics[width=0.3\textwidth]{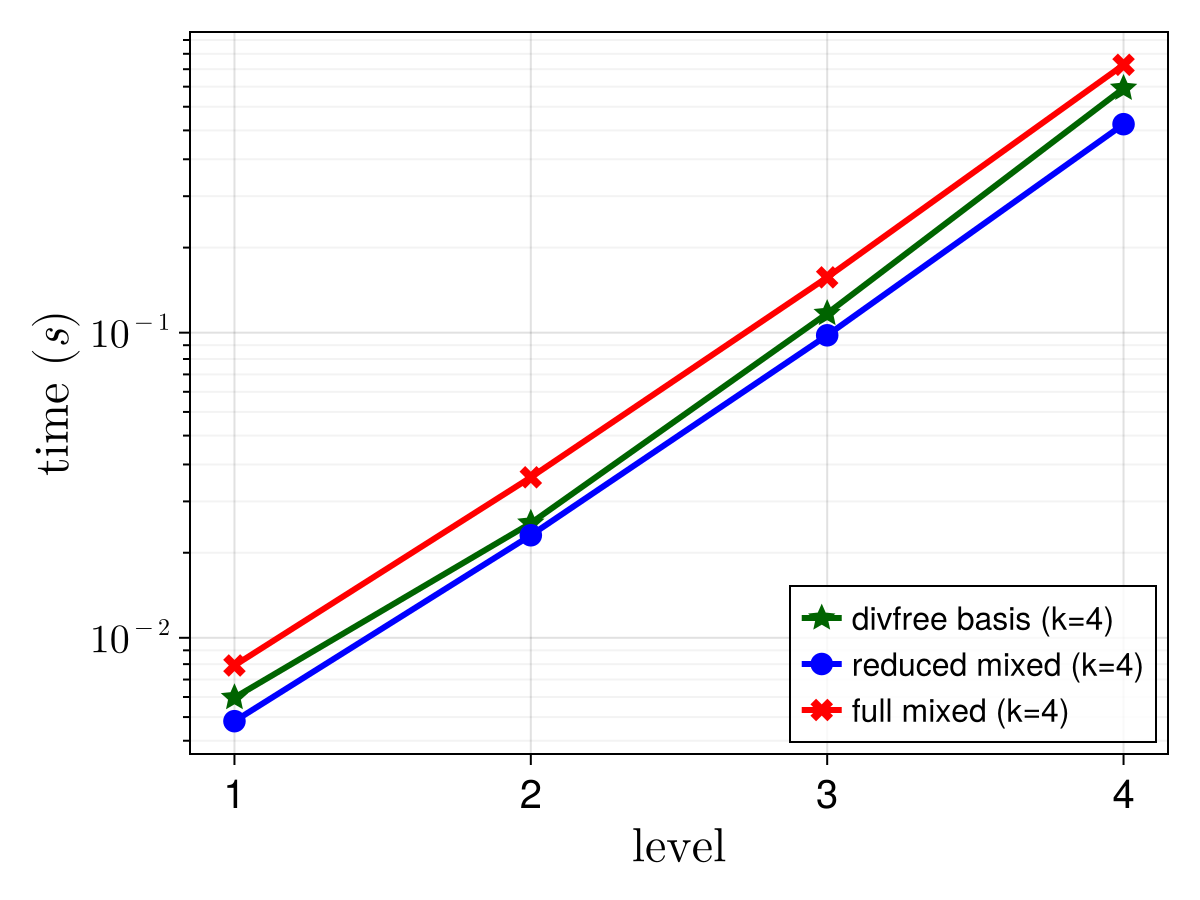}}
    \caption{Example~\ref{ex:2d} with $\nu = 1$. Solver times with Pardiso for polynomial degree $k=2$ (left), $k=3$ (center), and $k=4$ (right), $\delta = 1$ (skew-symmetric case).}
    \label{fig:2d_solvertimes_nonsym}
\end{figure}

\begin{figure}[t!]
\centerline{
    \includegraphics[width=0.3\textwidth]{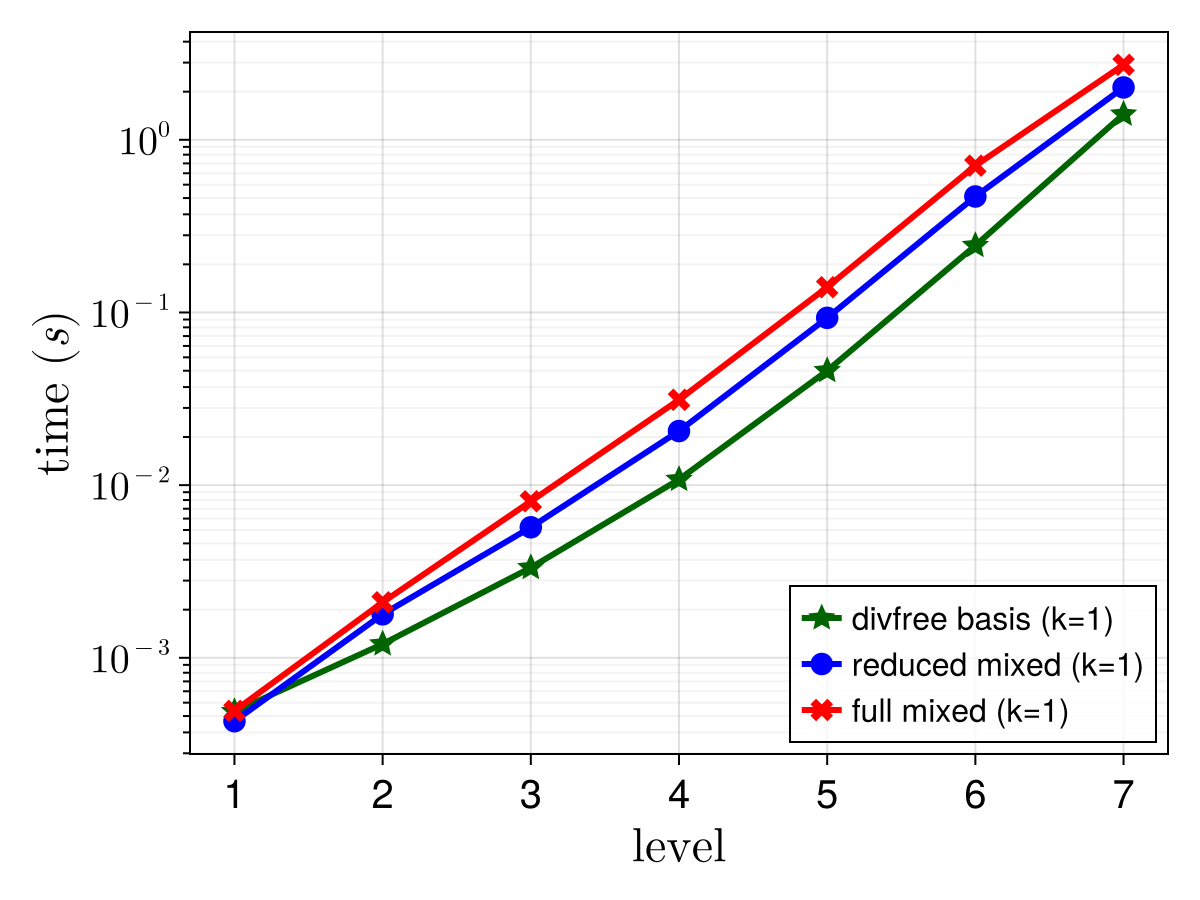}
    \includegraphics[width=0.3\textwidth]{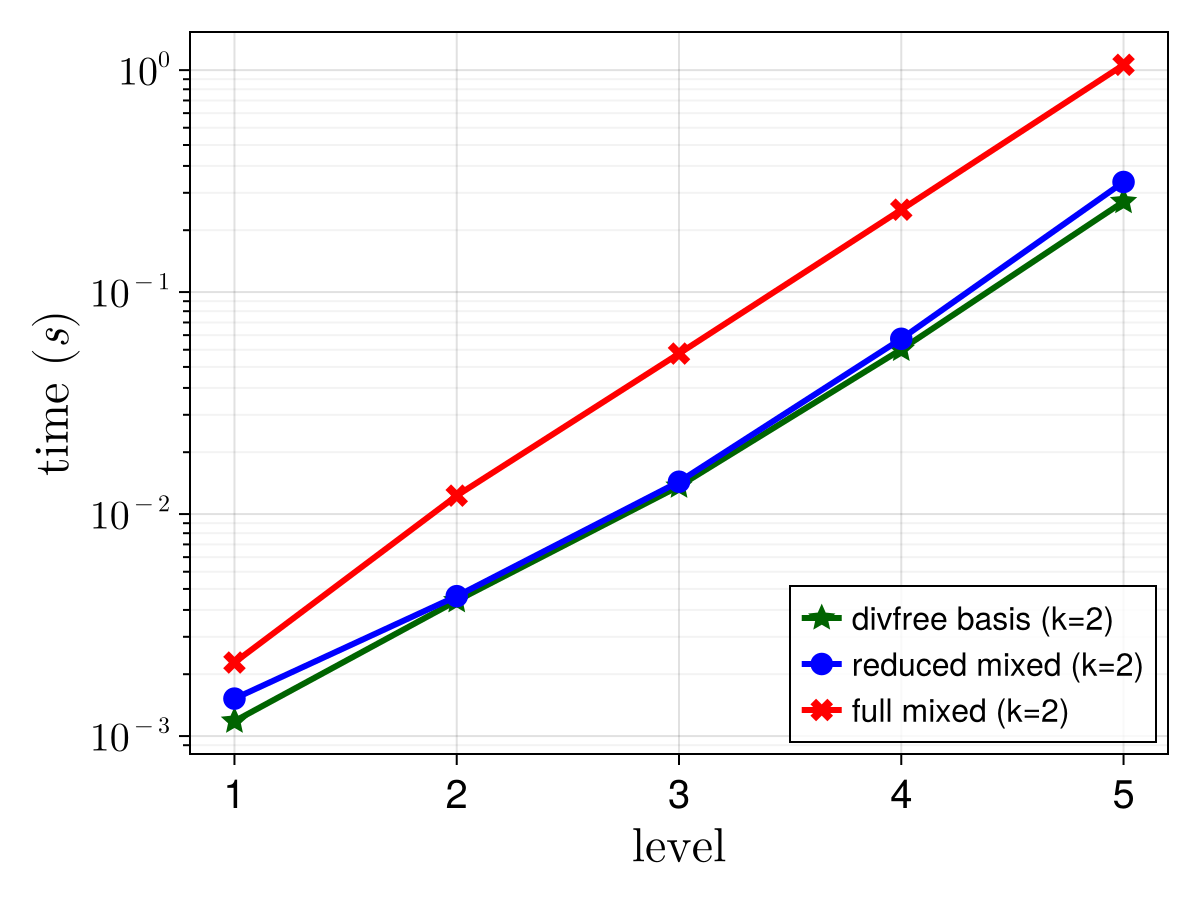}
    \includegraphics[width=0.3\textwidth]{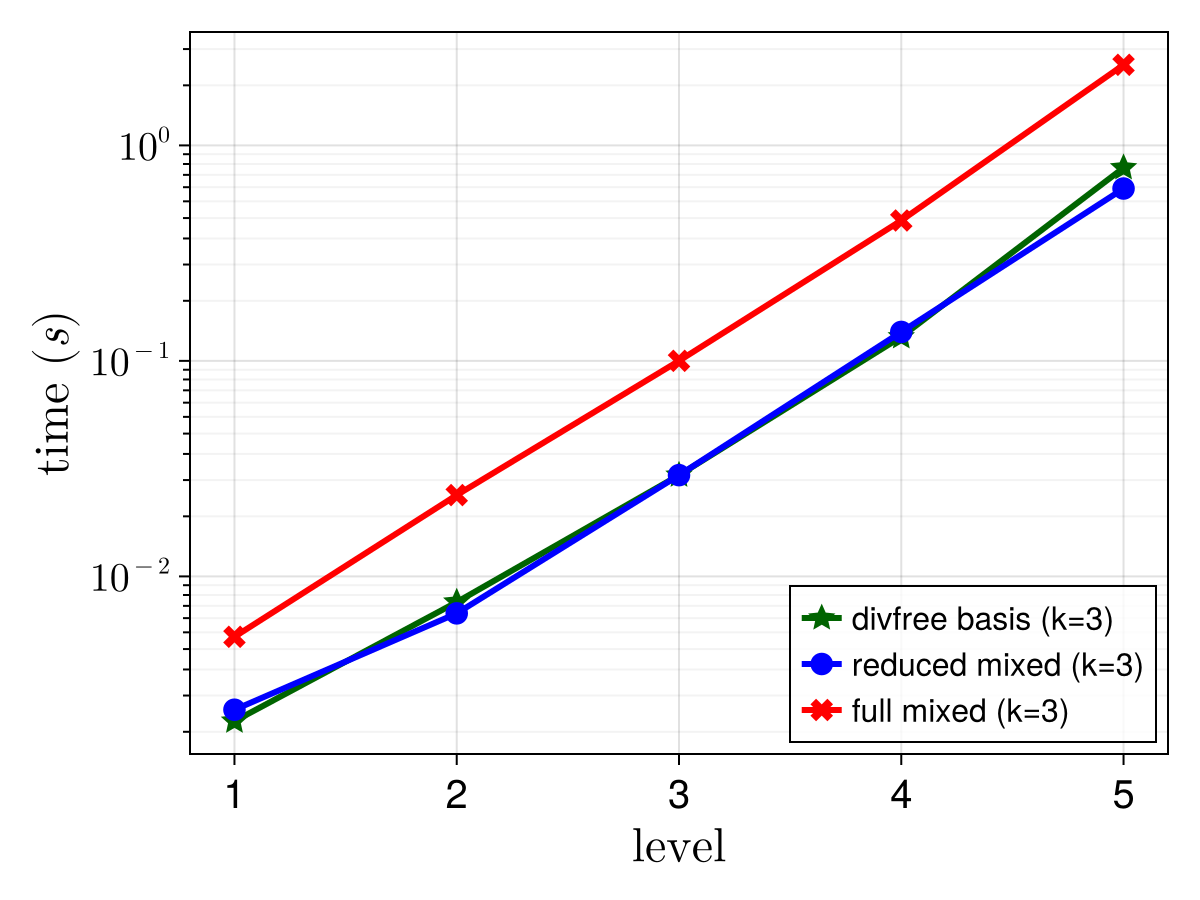}}
    \caption{Example~\ref{ex:2d} with $\nu = 1$. Solver times with Pardiso for polynomial degree $k=1$ (left), $k=2$ (center), and $k=3$ (right), $\delta = -1$ (symmetric case).}\label{fig:2d_solvertimes_sym}
\end{figure}

\begin{figure}[t!]
\centerline{
    \includegraphics[width=0.3\textwidth]{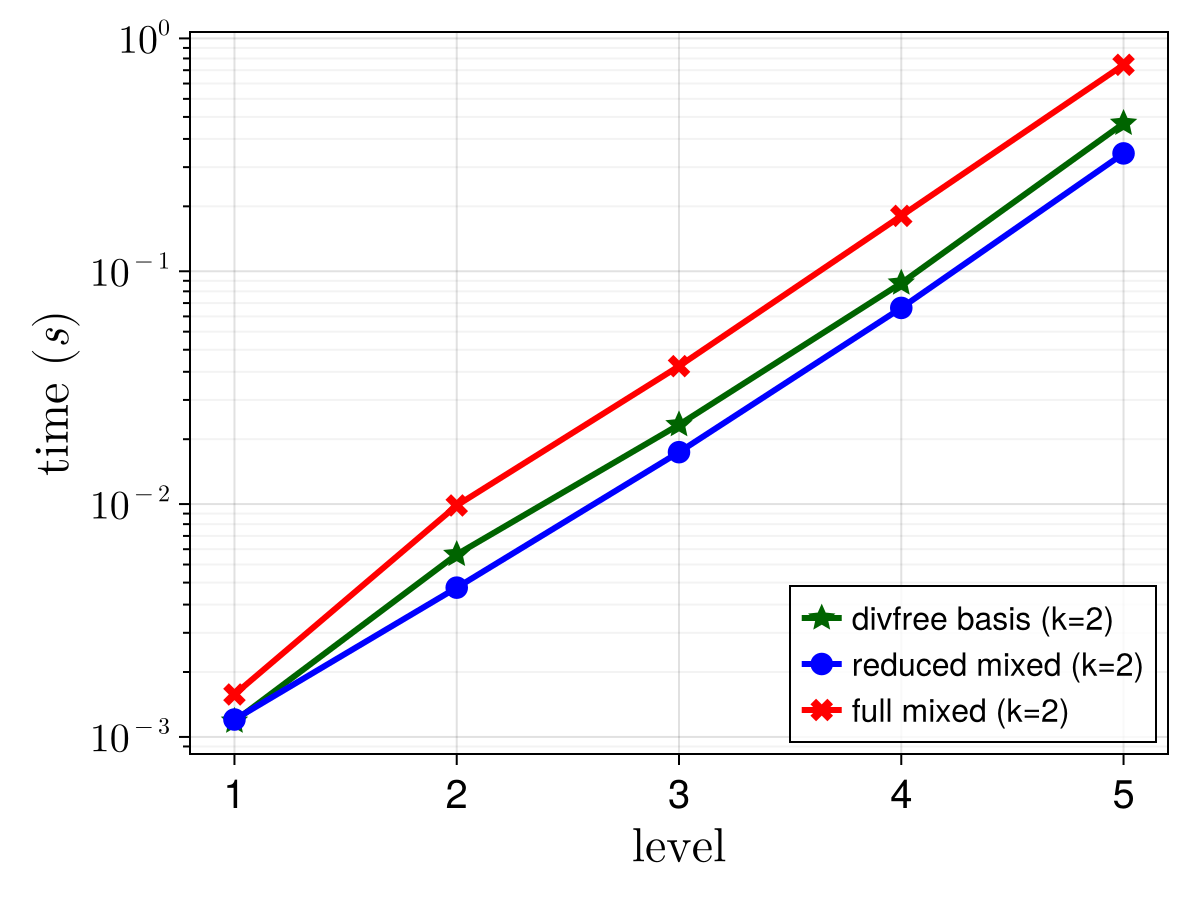}
    \includegraphics[width=0.3\textwidth]{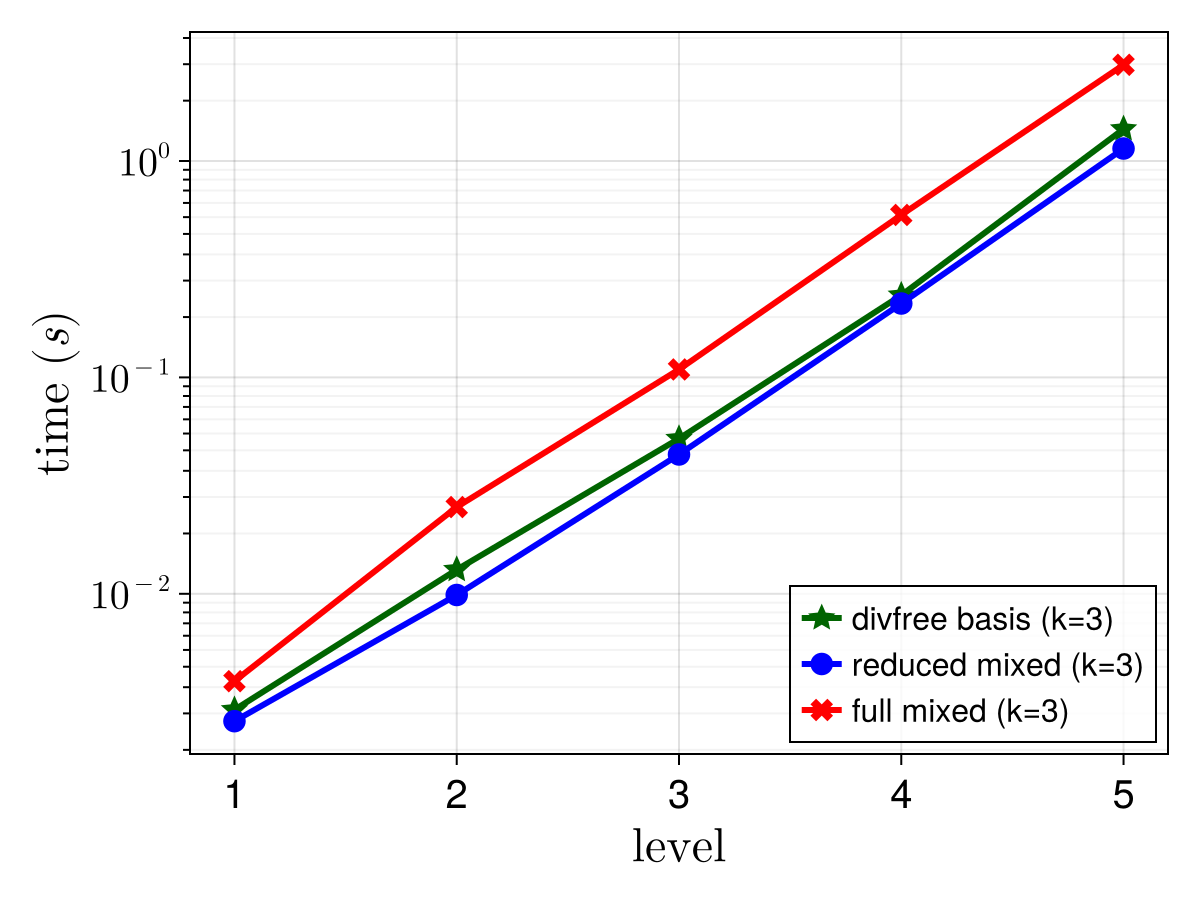}
    \includegraphics[width=0.3\textwidth]{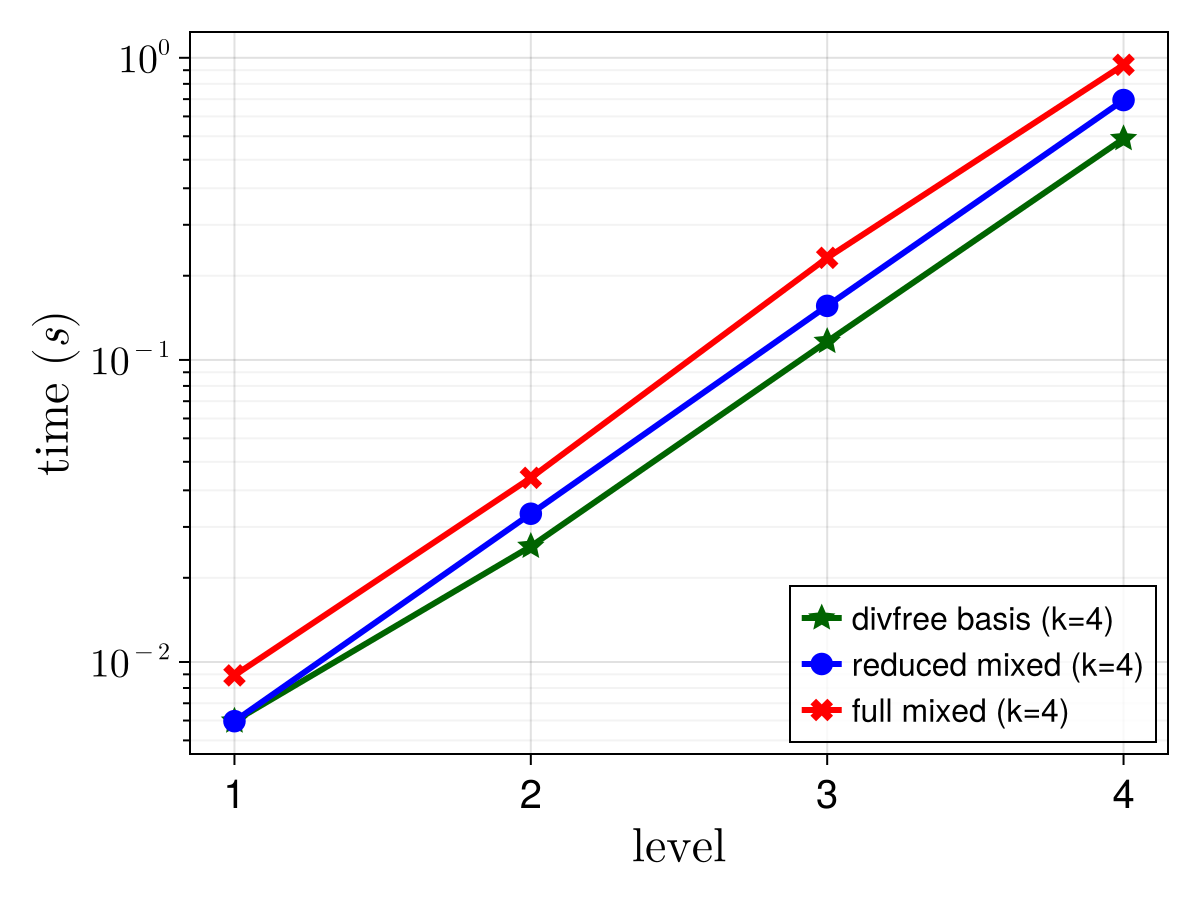}}
    \caption{Example~\ref{ex:2d} with $\nu = 10^{-6}$. Solver times with Pardiso for polynomial degree $k=2$ (left), $k=3$ (center), and $k=4$ (right), $\delta = 1$ (skew-symmetric case).}
    \label{fig:2d_solvertimes_nonsym_small_nu}
\end{figure}

\begin{figure}[t!]
\centerline{
    \includegraphics[width=0.3\textwidth]{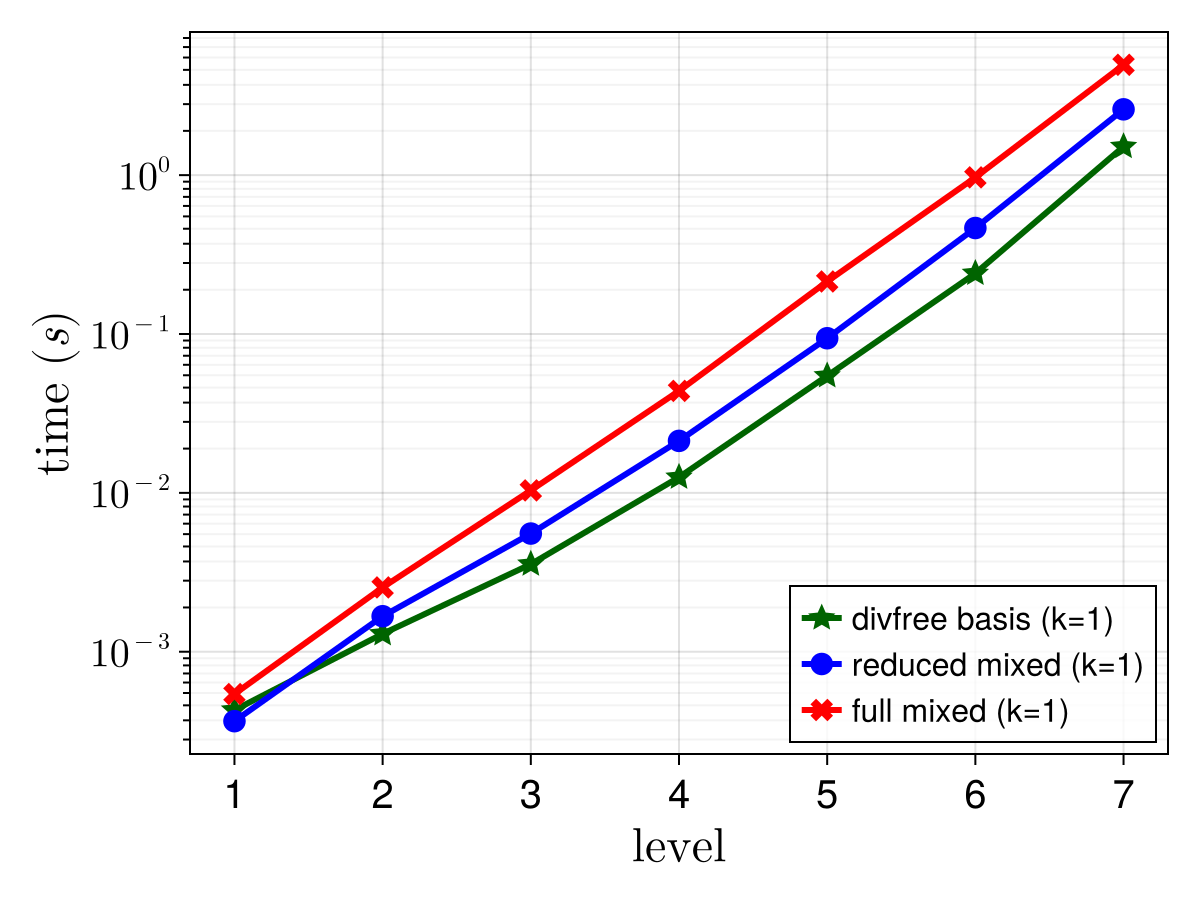}
    \includegraphics[width=0.3\textwidth]{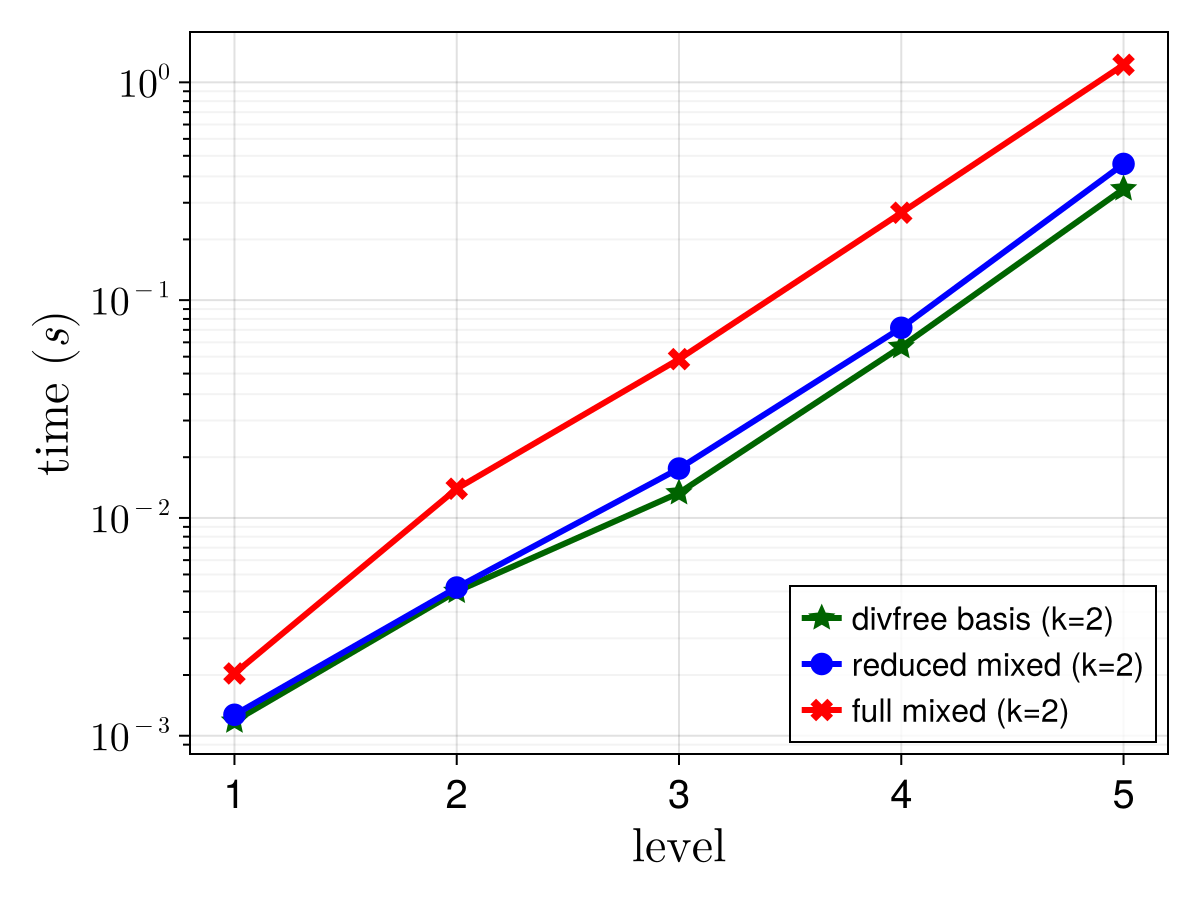}
    \includegraphics[width=0.3\textwidth]{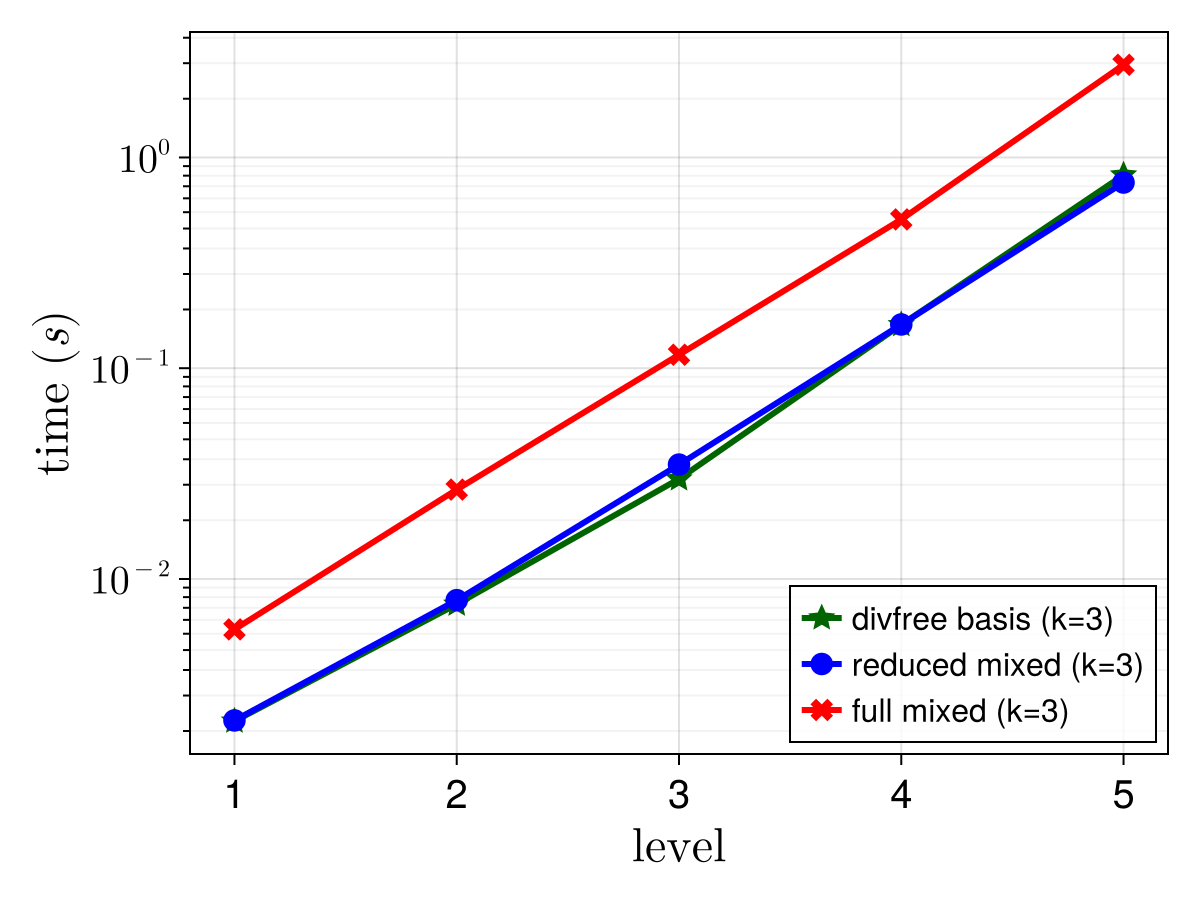}}
    \caption{Example~\ref{ex:2d} with $\nu = 10^{-6}$. Solver times with Pardiso for polynomial degree $k=1$ (left), $k=2$ (center), and $k=3$ (right), $\delta = -1$ (symmetric case).}\label{fig:2d_solvertimes_sym_small_nu}
\end{figure}

Figures~\ref{fig:2d_solvertimes_nonsym}--\ref{fig:2d_solvertimes_sym_small_nu} provide information about the efficiency of the 
studied methods in terms of computing times, using the direct sparse solver. 
As expected by construction, the reduced mixed methods are always more efficient than the full mixed methods.
Also the decoupled methods with divergence-free velocity basis are always faster than the full mixed methods. 
In some situations, the simulations with the decoupled methods were even somewhat faster than with the reduced
mixed methods, sometimes both approaches are of the same efficiency, and in the other cases the reduced mixed 
methods were a little bit faster. Altogether, the presented results show the good efficiency 
of both, the decoupled and the reduced mixed methods. We observe that, in the symmetric cases, which are of particular 
interest in practice, the decoupled methods are often a bit faster than the reduced mixed methods.

\begin{figure}[t!]
\centerline{
    \includegraphics[width=0.3\textwidth]{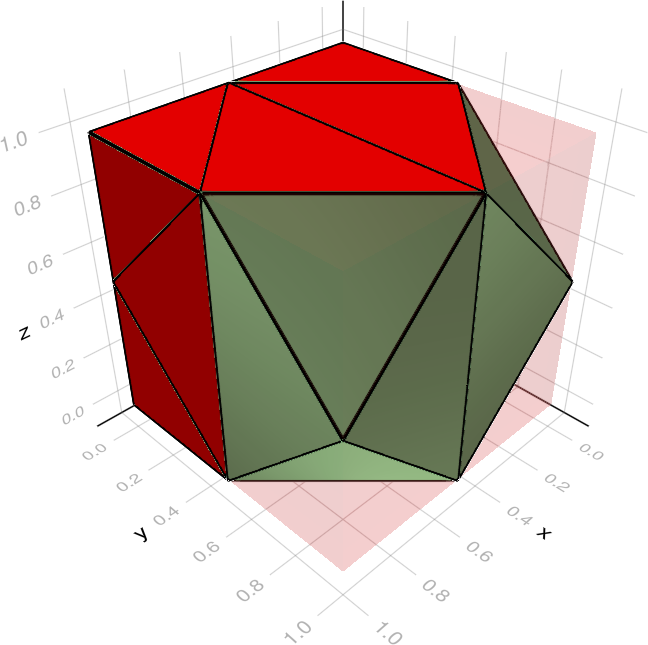}\hspace*{1em}
    \includegraphics[width=0.3\textwidth]{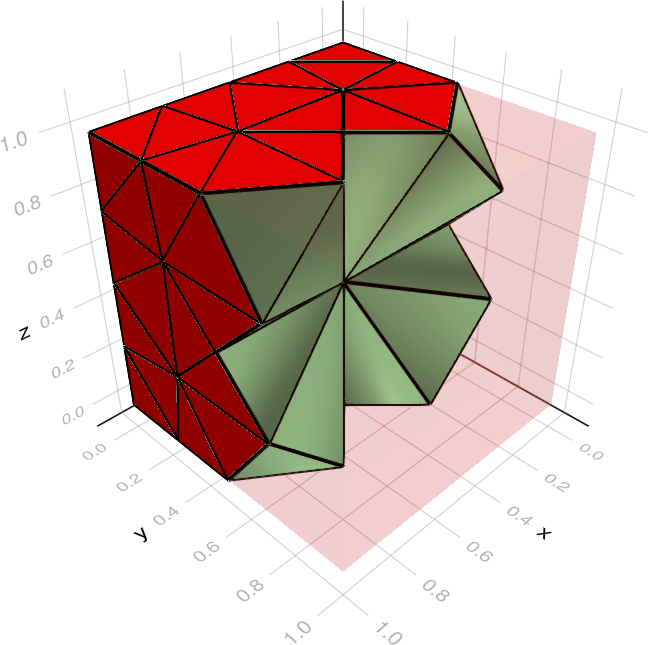}\hspace*{1em}
    \includegraphics[width=0.3\textwidth]{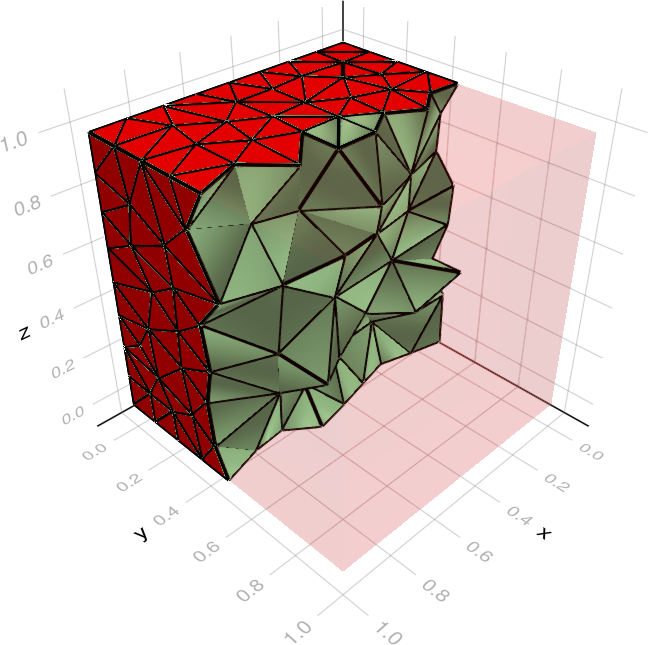}}
    \caption{Example~\ref{ex:3d}. Slices of the three coarsest computational grids.} \label{fig:3d_grids}
\end{figure}

\subsection{A three-dimensional  example}\label{ex:3d}

This example considers a Stokes problem in $\Omega = (0,1)^3$ with $\nu=1$ and the prescribed solution 
\[
\vecb{u}(x,y,z)  = \mathrm{curl} \left(\sin(y),\cos(z),\sin(x) \right)^T, \quad
    p(x,y,z)     =  \frac{\cos(4 \pi x) - \cos(4 \pi y)}4,
\]
which determines the right-hand side $\vecb{f}$ and the Dirichlet boundary condition on $\partial\Omega$.

\begin{table}[t!]
    \caption{\label{tab:3d_dof_nnz_nosym}Example~\ref{ex:3d}.  Number of degrees of freedom (ndofs) and number of non-zero sparse matrix entries (nnz)
    of the full system for the decoupled method with divergence-free basis (dfb) and the reduced mixed method (red)
    for different degrees $k$ and $\delta = 1$ (skew-symmetric case).}
    \footnotesize
    \begin{center}
    \begin{tabular}{ccccc}
\toprule
level & ndofs dfb (k=2) & ndofs red (k=2) & nnz dfb (k=2) & nnz red (k=2)\\
\midrule
1 & 277 & 259 & 15134 & 18933\\
2 & 1312 & 1239 & 94027 & 135001\\
3 & 8647 & 8339 & 756249 & 1190877\\
4 & 52994 & 51805 & 5028568 & 8308893\\
\bottomrule
\end{tabular}

    \begin{tabular}{ccccc}
\toprule
level & ndofs dfb (k=3) & ndofs red (k=3) & nnz dfb (k=3) & nnz red (k=3)\\
\midrule
1 & 640 & 622 & 64883 & 58728\\
2 & 3262 & 3189 & 395725 & 350243\\
3 & 23161 & 22853 & 3260511 & 2882731\\
4 & 147080 & 145891 & 22085754 & 19540909\\
\bottomrule
\end{tabular}

    \end{center}
\end{table}

\begin{table}[t!]
    \caption{\label{tab:3d_dof_nnz_sym}Example~\ref{ex:3d}. Number of degrees of  freedom (ndofs) and number of non-zero sparse matrix entries (nnz)
    of the full system for the decoupled method with divergence-free basis (dfb) and the reduced mixed method (red) 
    for different degrees $k$ and $\delta = -1$ (symmetric case).}
    \footnotesize
    \begin{center}
    \begin{tabular}{ccccc}
\toprule
level & ndofs dfb (k=1) & ndofs red (k=1) & nnz dfb (k=1) & nnz red (k=1)\\
\midrule
1 & 100 & 82 & 2469 & 784\\
2 & 442 & 369 & 14833 & 5195\\
3 & 2737 & 2429 & 124101 & 46959\\
4 & 16334 & 15145 & 824112 & 327725\\
5 & 109681 & 104831 & 5886385 & 2412889\\
\bottomrule
\end{tabular}

    \begin{tabular}{ccccc}
\toprule
level & ndofs dfb (k=2) & ndofs red (k=2) & nnz dfb (k=2) & nnz red (k=2)\\
\midrule
1 & 277 & 259 & 15254 & 19067\\
2 & 1312 & 1239 & 94053 & 135001\\
3 & 8647 & 8339 & 756255 & 1190877\\
4 & 52994 & 51805 & 5028578 & 8308893\\
\bottomrule
\end{tabular}

    \begin{tabular}{ccccc}
\toprule
level & ndofs dfb (k=3) & ndofs red (k=3) & nnz dfb (k=3) & nnz red (k=3)\\
\midrule
1 & 640 & 622 & 65431 & 58728\\
2 & 3262 & 3189 & 398321 & 350243\\
3 & 23161 & 22853 & 3280405 & 2882731\\
4 & 147080 & 145891 & 22211122 & 19540909\\
\bottomrule
\end{tabular}

    \end{center}
\end{table}

Numerical results are presented only for the reduced mixed methods and the decoupled methods with 
divergence-free velocity basis. 
The simulations considered a sequence of unstructured grids with decreasing mesh width, 
see Figure~\ref{fig:3d_grids} for the three coarsest grids, and polynomials
up to degree $k=3$.
To impose the non-homogeneous Dirichlet boundary condition in the decoupled methods, the method that restricts 
the Darcy-type problem to a subdomain close to the boundary, see the end of Section~\ref{sec:nonhomo_BC_3d}, 
was applied.
The number of degrees of freedom and non-zero matrix entries is given in Tables~\ref{tab:3d_dof_nnz_nosym}
and~\ref{tab:3d_dof_nnz_sym}. It can be observed that usually both, the numbers of degrees of freedom and
the numbers of non-zero entries, are smaller for the reduced methods, except $k=2$.
For $k=2$, the situation with respect to the non-zero matrix entries is vice versa.

\begin{figure}[t!]
\centerline{
    \includegraphics[width=0.3\textwidth]{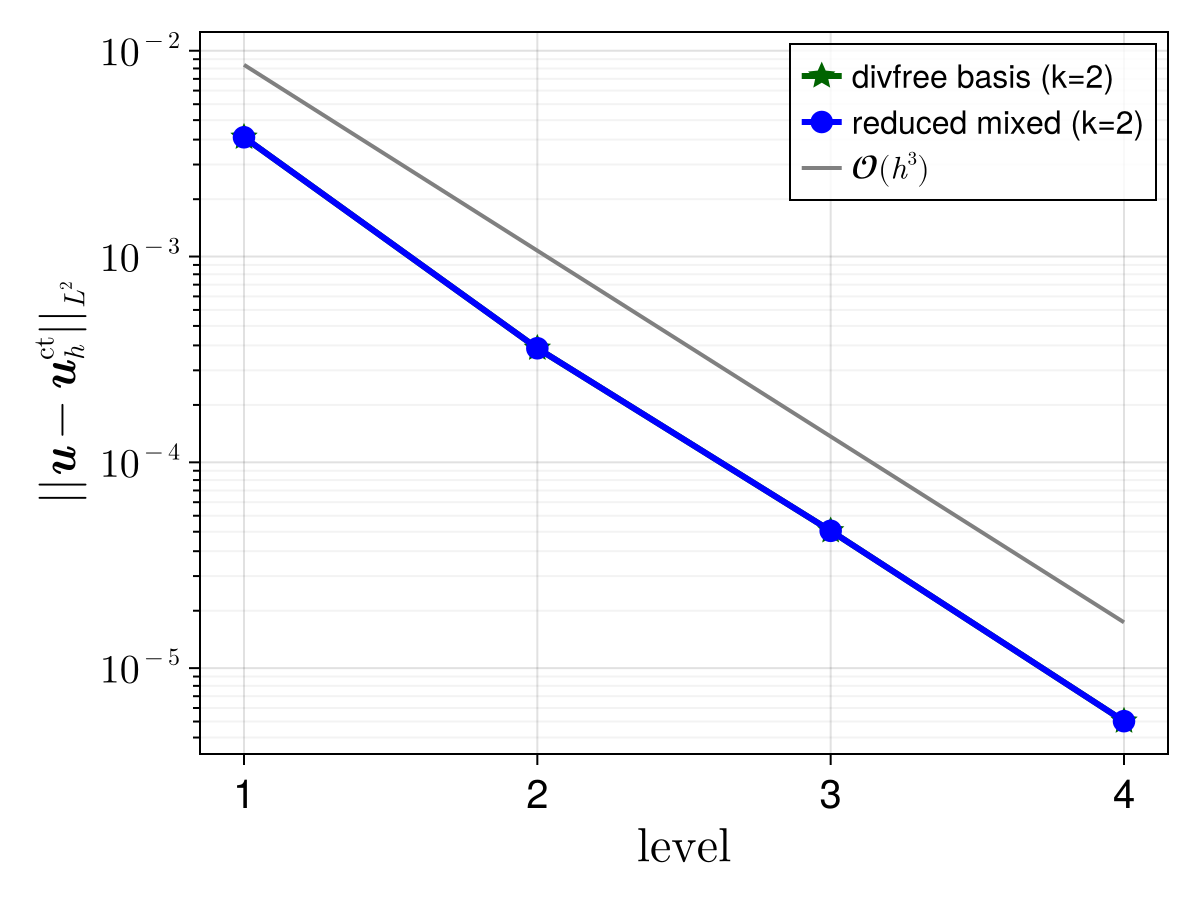}
    \includegraphics[width=0.3\textwidth]{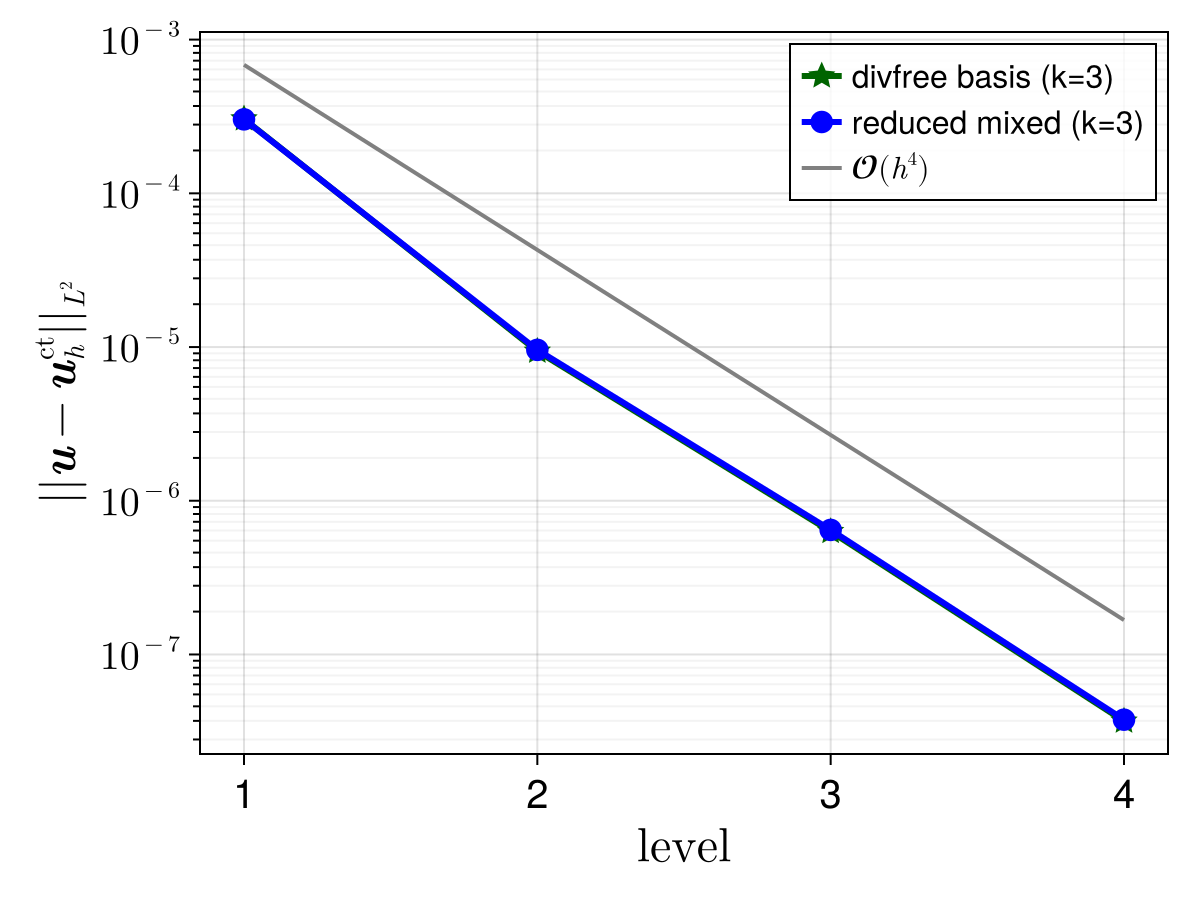}}
\centerline{
    \includegraphics[width=0.3\textwidth]{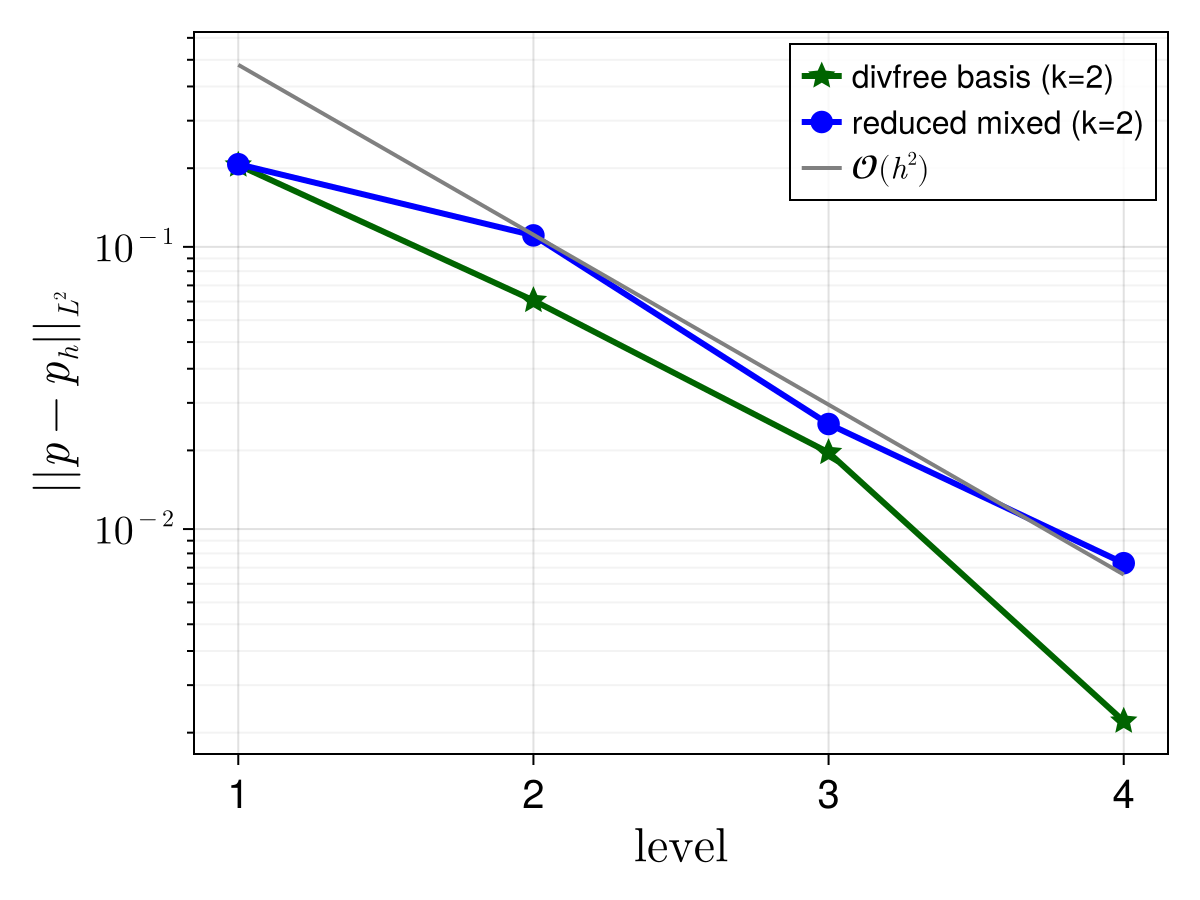}
    \includegraphics[width=0.3\textwidth]{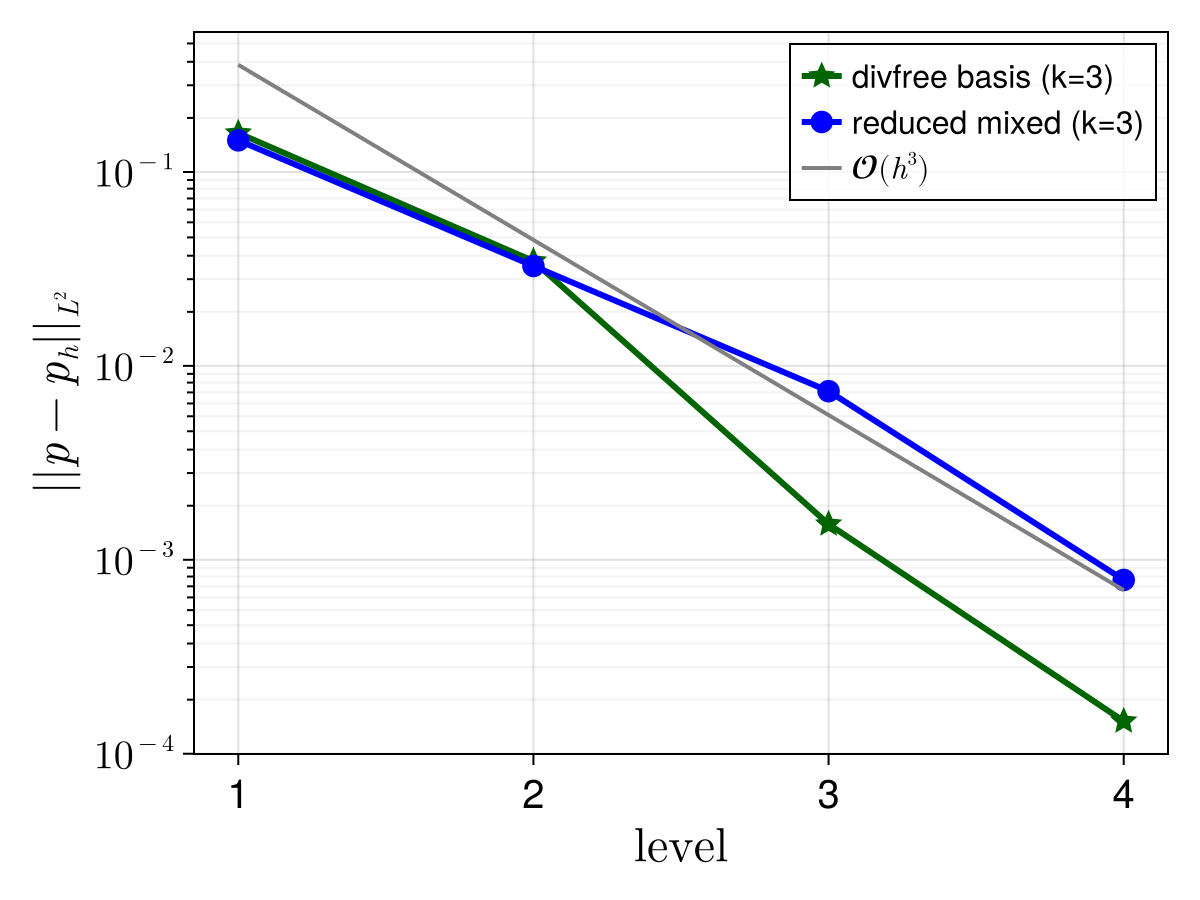}}
    \caption{Example~\ref{ex:3d}. $L^2(\Omega)$ velocity error (top) and pressure error (bottom) for the polynomial degree $k=2$ (left) and $k=3$ (right), $\delta = 1$ (skew-symmetric case).} \label{fig:3d_conv_nonsym}
\end{figure}

\begin{figure}[t!]
\centerline{
    \includegraphics[width=0.3\textwidth]{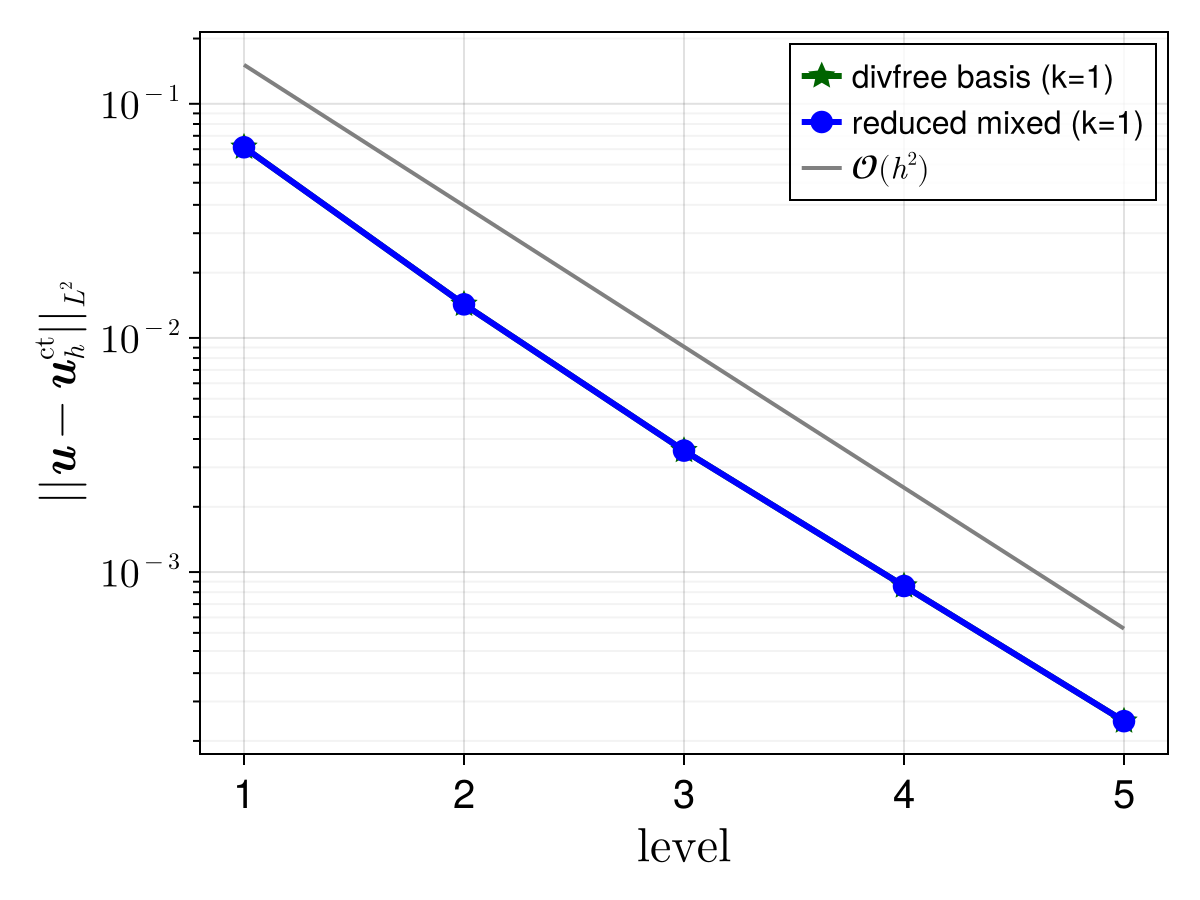}
    \includegraphics[width=0.3\textwidth]{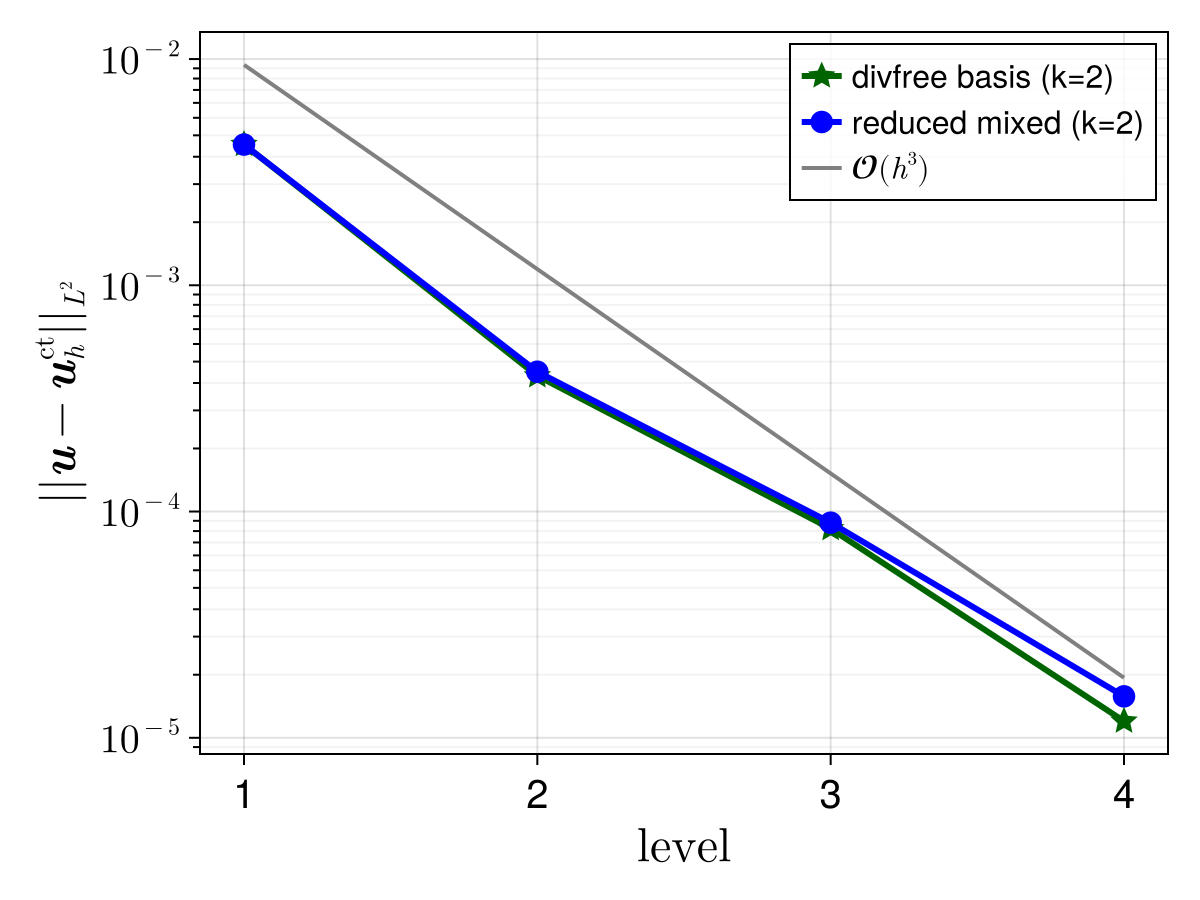}
    \includegraphics[width=0.3\textwidth]{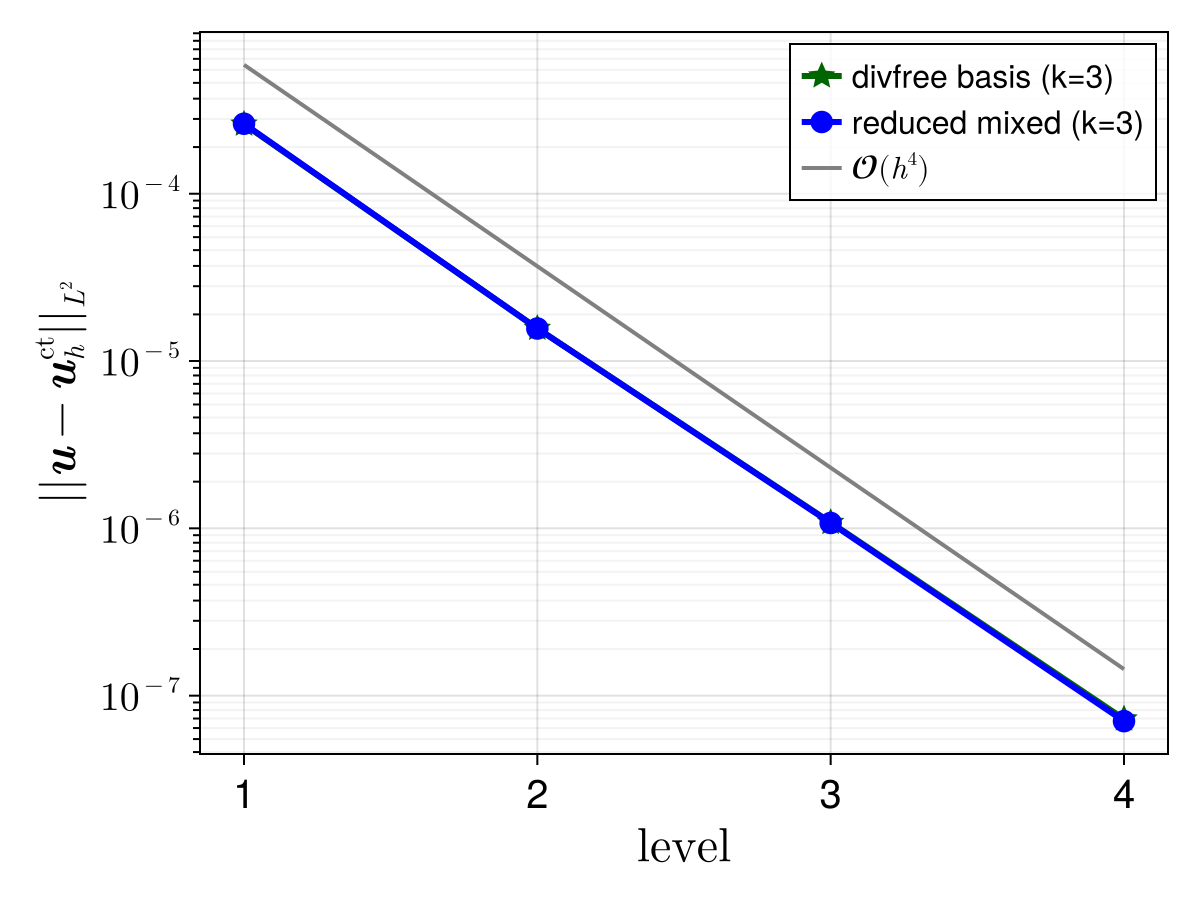}}
\centerline{    
    \includegraphics[width=0.3\textwidth]{pics/3d/L2error_velocity_order=1_solver=PardisoMKL_mtype=11_levels=5_symmetric_alpha=1,0.png}
    \includegraphics[width=0.3\textwidth]{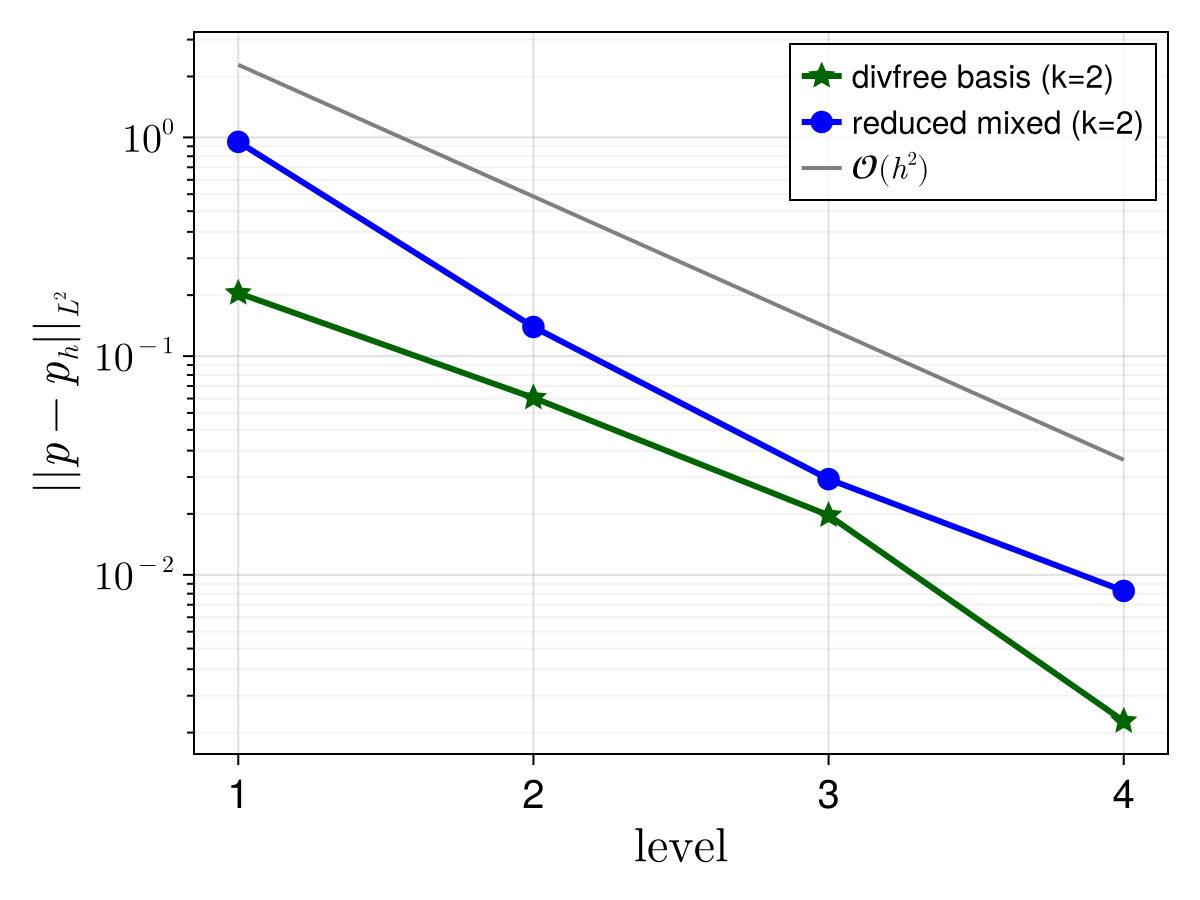}
    \includegraphics[width=0.3\textwidth]{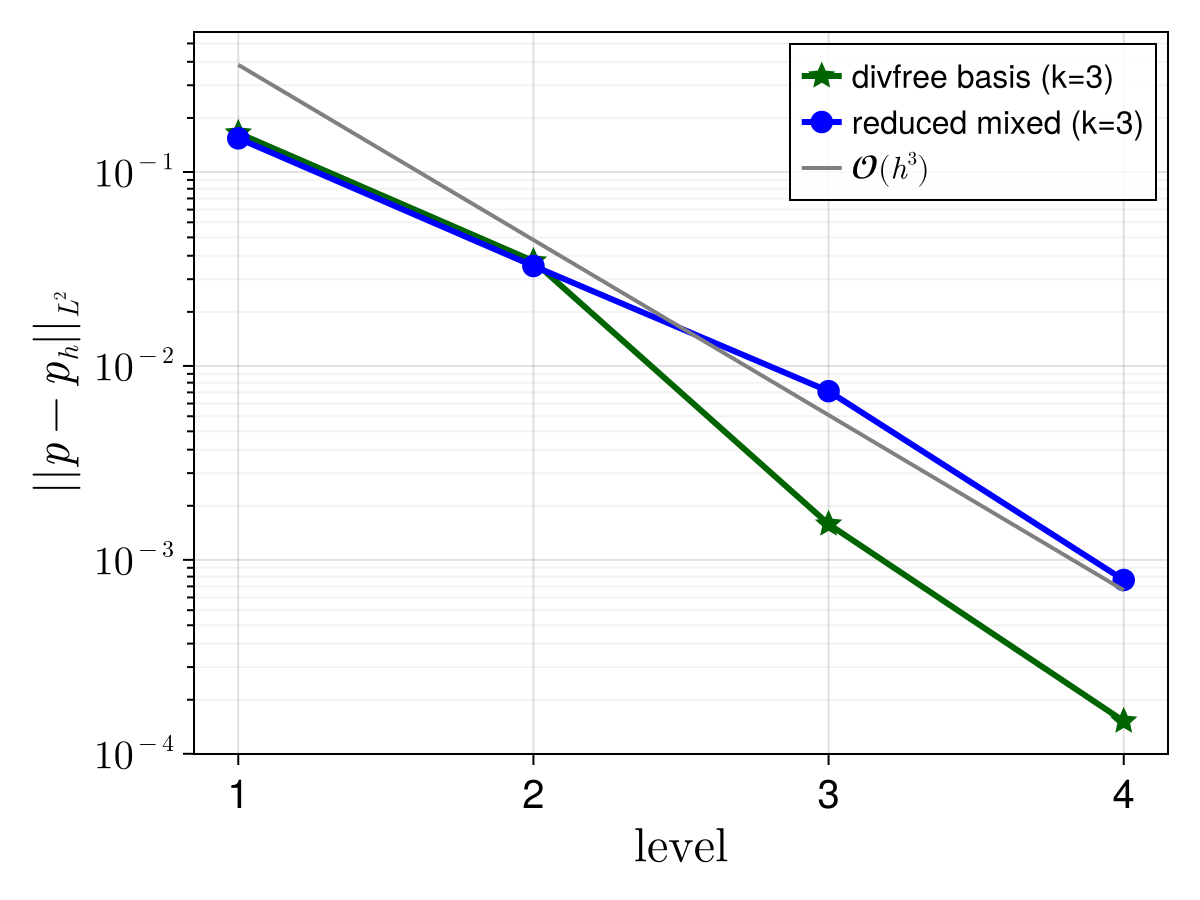}}
    \caption{Example~\ref{ex:3d}. $L^2(\Omega)$ velocity error (top) and pressure error (bottom) for the polynomial degree $k=1$ (left), $k=2$ (center), and $k=3$ (right), $\delta = -1$ (symmetric case).} \label{fig:3d_conv_sym}
\end{figure}

The behavior of velocity and pressure errors is depicted in Figures~\ref{fig:3d_conv_nonsym} and~\ref{fig:3d_conv_sym}. Like in the two-dimensional example, 
the velocity errors computed with both methods are virtually the same, 
while the pressure approximations for the decoupled methods show a better accuracy than those computed with the reduced mixed methods.

\begin{figure}[t!]
\centerline{
    \includegraphics[width=0.3\textwidth]{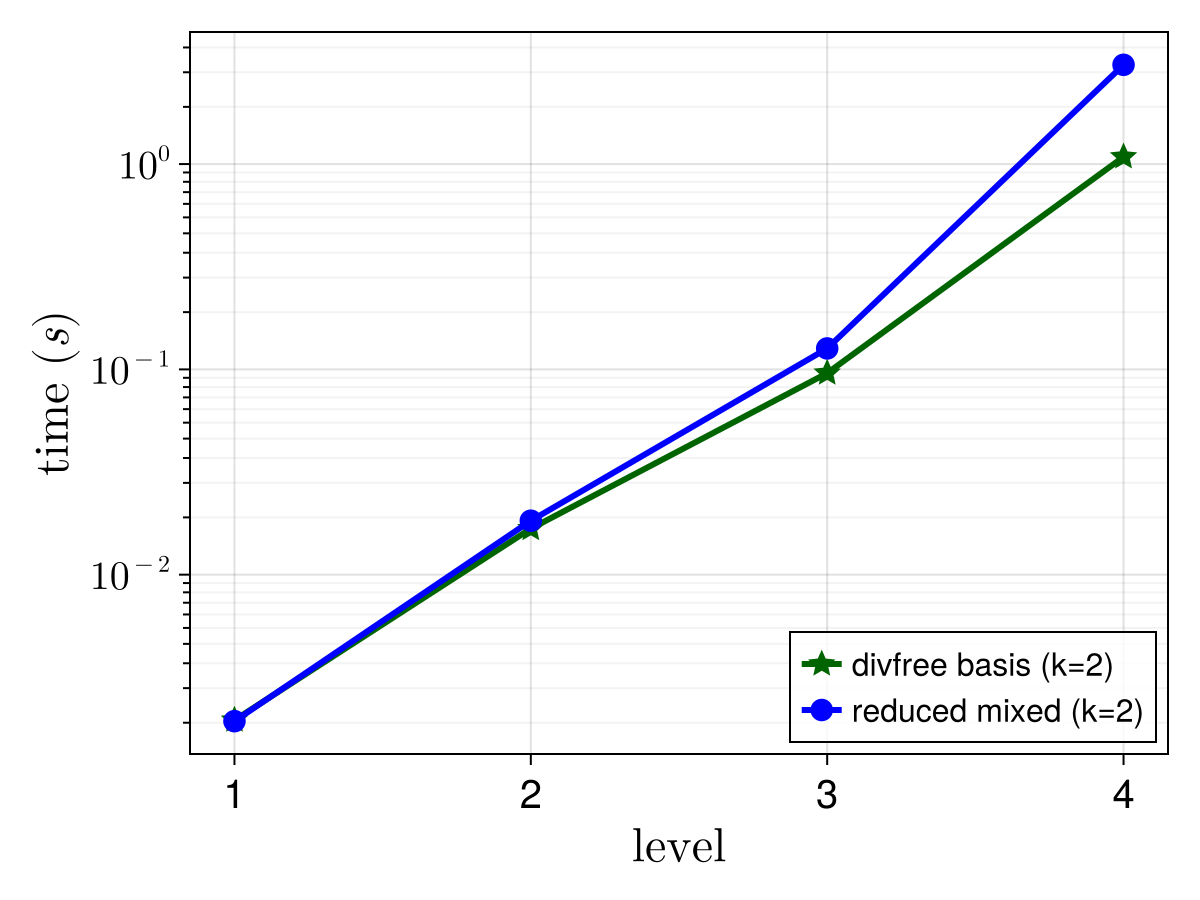}
    \includegraphics[width=0.3\textwidth]{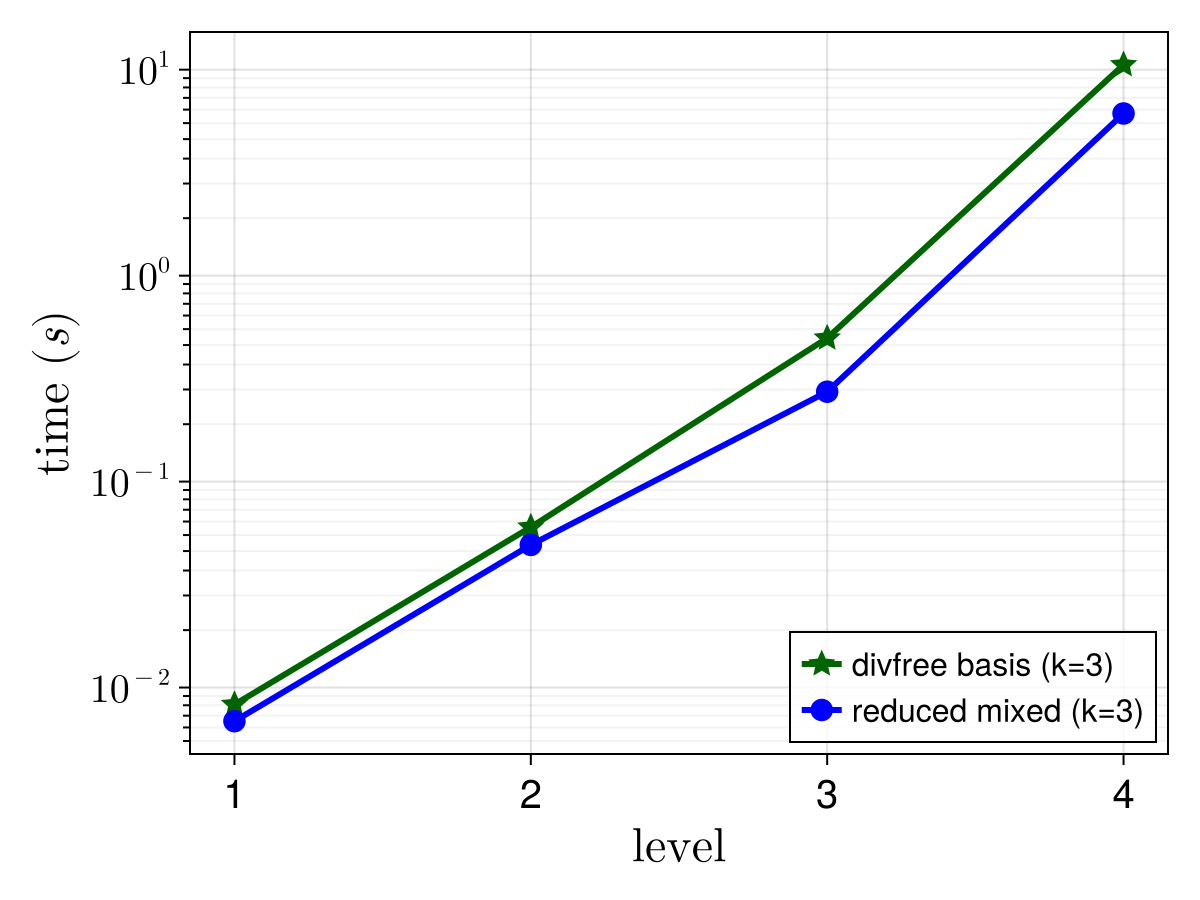}}
    \caption{Example~\ref{ex:3d}. Solver times with Pardiso for polynomial degree $k=2$ (left) and $k=3$ (right), $\delta = 1$ (skew-symmetric case).}
    \label{fig:3d_solvertimes_nonsym}
\end{figure} 
 
\begin{figure}[t!]
\centerline{
    \includegraphics[width=0.3\textwidth]{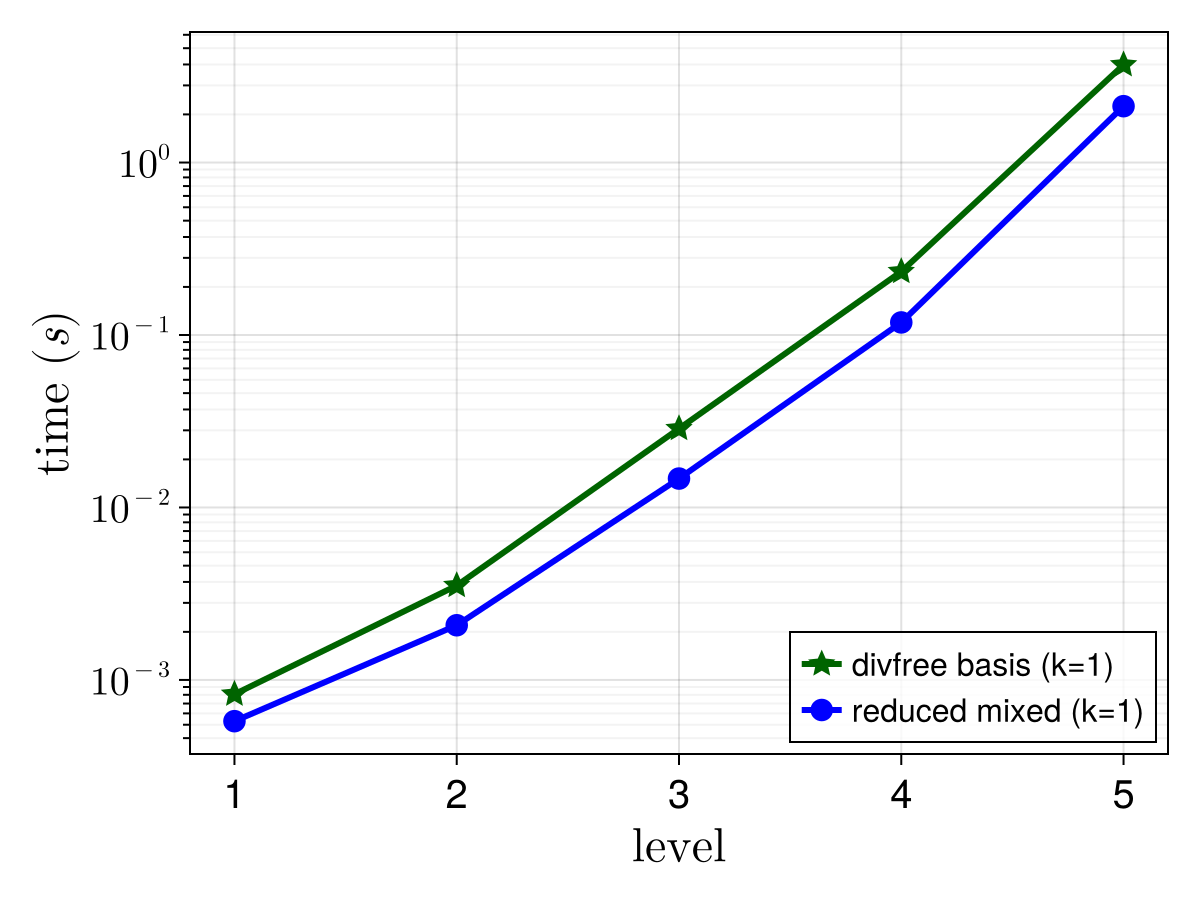}
    \includegraphics[width=0.3\textwidth]{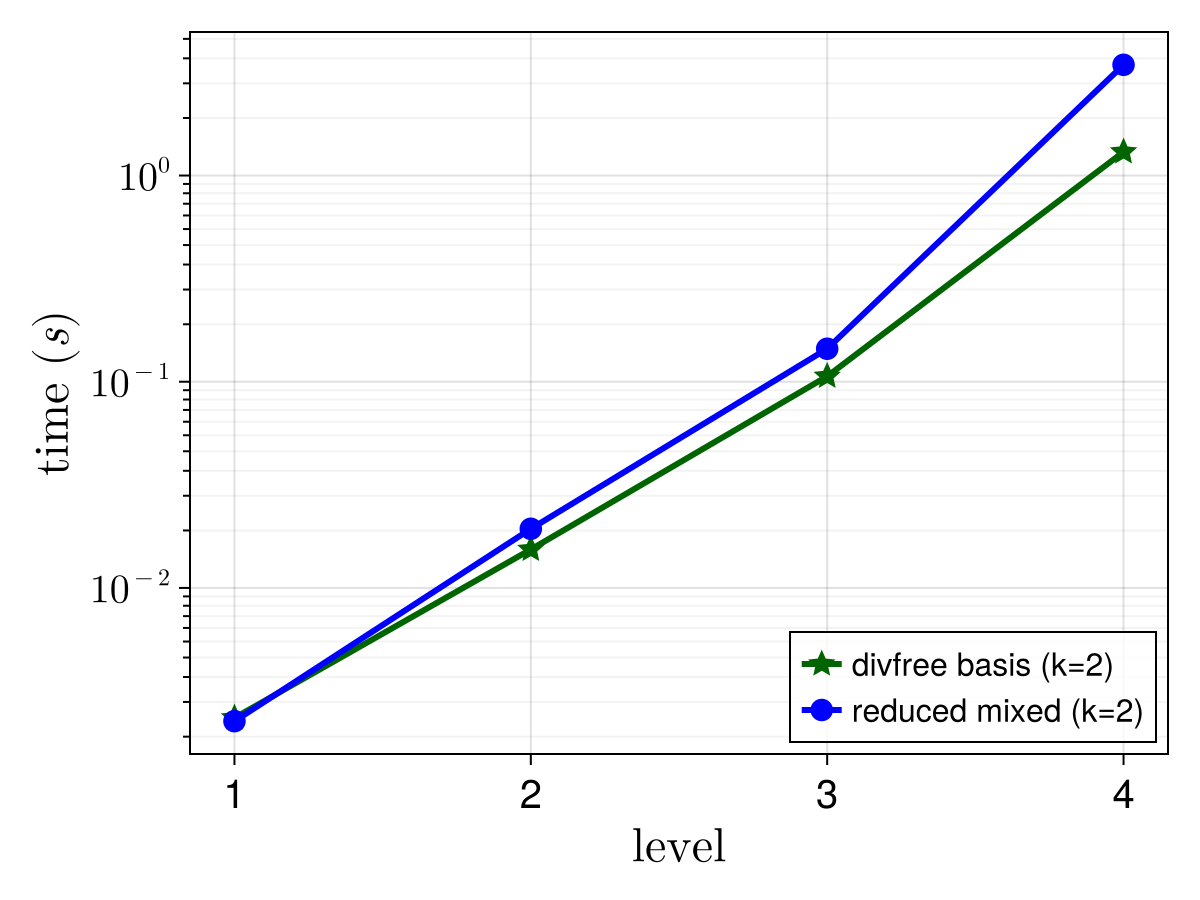}
    \includegraphics[width=0.3\textwidth]{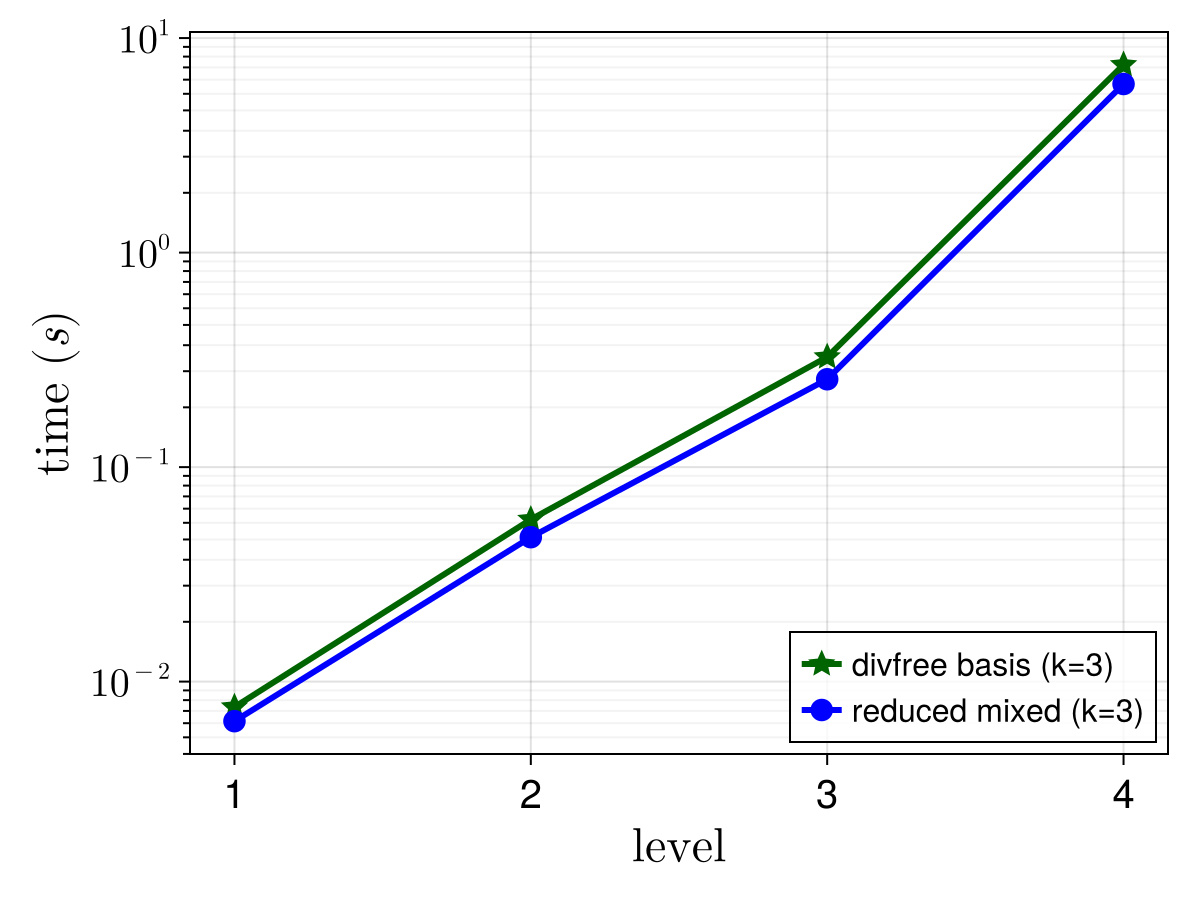}}
    \caption{Example~\ref{ex:3d}. Solver times with Pardiso for polynomial degree $k=1$ (left), $k=2$ (center), and $k=3$ (right), $\delta = -1$ (symmetric case).}\label{fig:3d_solvertimes_sym}
\end{figure} 

Finally, computing times are provided in Figures~\ref{fig:3d_solvertimes_nonsym} and~\ref{fig:3d_solvertimes_sym}.
In most cases, the reduced mixed methods are somewhat faster than the decoupled methods. However, for $k=2$, 
which is a very common choice in practice, the decoupled methods are more efficient.

\section{Summary and outlook}

This paper proposed finite element methods for the incompressible Stokes equations
with a basis of the velocity space consisting of divergence-free functions. In this way, 
it is possible to compute the velocity approximation without needing to solve a coupled
velocity-pressure system, possibly of saddle point type. Constructions of the basis 
for arbitrary polynomial degree in two and three space dimensions for simplicial meshes 
were provided. Other algorithmic issues, like the incorporation of non-homogeneous 
Dirichlet boundary conditions and the computation of a finite element pressure approximation
were addressed and corresponding algorithms were proposed. The finite element error analysis
for the velocity from \cite{JLMR2022} carries over to the proposed method and an estimate for the 
gradient of the pressure error was derived. Numerical studies 
showed the expected orders of convergence. Using a sparse direct solver, the proposed methods 
proved to be consistently faster than the full mixed methods from \cite{JLMR2022} and in some situations
even more efficient than the reduced mixed methods derived in \cite{JLMR2022}.

The extension of the proposed methods to the incompressible Navier--Stokes equations can be 
performed along the lines of \cite{AJLM24}, i.e., special discrete forms of the 
nonlinear convective term have to be  considered. A further aspect of interest is the efficiency 
of the methods, in particular in comparison with the reduced mixed methods from \cite{JLMR2022}, 
if appropriate iterative solvers for the arising linear systems of equations are applied.

\bibliographystyle{siamplain}
\bibliography{lit_doi_deleted}

@article{li2021low,
    AUTHOR = {Li, Xu and Rui, Hongxing},
    TITLE = {A low-order divergence-free {H}(div)-conforming finite element method for {S}tokes flows},
    JOURNAL = {IMA J. Numer. Anal.},
    FJOURNAL = {IMA Journal of Numerical Analysis},
    YEAR = {2021},
    ISSN = {0272-4979},
    volume = {42},
    number = {4},
    pages = {3711--3734},
}

@Book{BrennerScott:2008,
    AUTHOR = {Brenner, Susanne C. and Scott, L. Ridgway},
     TITLE = {The mathematical theory of finite element methods},
    SERIES = {Texts in Applied Mathematics},
    VOLUME = {15},
   EDITION = {Third},
 PUBLISHER = {Springer, New York},
      YEAR = {2008},
     PAGES = {xviii+397},
      ISBN = {978-0-387-75933-3},
   MRCLASS = {65-01 (65-02)},
  MRNUMBER = {2373954 (2008m:65001)}
}

@book {GiraultRaviart:1986,
    AUTHOR = {Girault, Vivette and Raviart, Pierre-Arnaud},
     TITLE = {Finite element methods for {N}avier--{S}tokes equations},
    SERIES = {Springer Series in Computational Mathematics},
    VOLUME = {5},
      NOTE = {Theory and algorithms},
 PUBLISHER = {Springer-Verlag},
   ADDRESS = {Berlin},
      YEAR = {1986},
     PAGES = {x+374},
      ISBN = {3-540-15796-4},
   MRCLASS = {65N30 (65-02 76-08)},
  MRNUMBER = {851383 (88b:65129)},
MRREVIEWER = {Max D. Gunzburger}
}

@article{BR1985,
    AUTHOR = {Bernardi, Christine and Raugel, Genevi{\`e}ve},
     TITLE = {Analysis of some finite elements for the {S}tokes problem},
   JOURNAL = {Math. Comp.},
  FJOURNAL = {Mathematics of Computation},
    VOLUME = {44},
      YEAR = {1985},
    NUMBER = {169},
     PAGES = {71--79},
      ISSN = {0025-5718},
     CODEN = {MCMPAF},
   MRCLASS = {65N30 (76-08 76D07)},
  MRNUMBER = {771031 (86b:65119)},
MRREVIEWER = {Murli M. Gupta}
}

@article{Schroeder_2018,
	AUTHOR = {Schroeder, Philipp W. and Lehrenfeld, Christoph and Linke,
              Alexander and Lube, Gert},
     TITLE = {Towards computable flows and robust estimates for inf-sup
              stable {FEM} applied to the time-dependent incompressible
              {N}avier--{S}tokes equations},
   JOURNAL = {SeMA J.},
  FJOURNAL = {SeMA Journal. Boletin de la Sociedad Espan\~{n}ola de Matem\'{a}tica
              Aplicada},
    VOLUME = {75},
      YEAR = {2018},
    NUMBER = {4},
     PAGES = {629--653},
      ISSN = {2254-3902},
   MRCLASS = {65M60 (35Q30 65M15 76D05 76D17 76M10)},
  MRNUMBER = {3875918}
}

@article{JLMNR:2017,
    AUTHOR = {John, Volker and Linke, Alexander and Merdon, Christian and
              Neilan, Michael and Rebholz, Leo G.},
     TITLE = {On the divergence constraint in mixed finite element methods
              for incompressible flows},
   JOURNAL = {SIAM Rev.},
  FJOURNAL = {SIAM Review},
    VOLUME = {59},
      YEAR = {2017},
    NUMBER = {3},
     PAGES = {492--544},
      ISSN = {0036-1445},
   MRCLASS = {65N30 (76M10)},
  MRNUMBER = {3683678},
MRREVIEWER = {Stephan Schmidt}
}

@incollection {SV:1983,
    AUTHOR = {Scott, L. R. and Vogelius, M.},
     TITLE = {Conforming finite element methods for incompressible and
              nearly incompressible continua},
 BOOKTITLE = {Large-scale computations in fluid mechanics, {P}art 2 ({L}a
              {J}olla, {C}alif., 1983)},
    SERIES = {Lectures in Appl. Math.},
    VOLUME = {22},
     PAGES = {221--244},
 PUBLISHER = {Amer. Math. Soc.},
   ADDRESS = {Providence, RI},
      YEAR = {1985},
   MRCLASS = {65N30 (76-08)},
  MRNUMBER = {818790 (87h:65202)},
MRREVIEWER = {David F. Griffiths},
}

@article{Zhang:2005,
 AUTHOR = {Zhang, Shangyou},
     TITLE = {A new family of stable mixed finite elements for the {3D} {Stokes} equations},
   JOURNAL = {Math. Comp.},
  FJOURNAL = {Mathematics of Computation},
    VOLUME = {74},
      YEAR = {2005},
    NUMBER = {250},
     PAGES = {543--554},
}

@article{ABBGLM2021,
AUTHOR = {Ahmed, Naveed and Barrenechea, Gabriel R. and Burman, Erik and
              Guzm\'{a}n, Johnny and Linke, Alexander and Merdon, Christian},
     TITLE = {A pressure-robust discretization of {O}seen's equation using
              stabilization in the vorticity equation},
   JOURNAL = {SIAM J. Numer. Anal.},
  FJOURNAL = {SIAM Journal on Numerical Analysis},
    VOLUME = {59},
      YEAR = {2021},
    NUMBER = {5},
     PAGES = {2746--2774},
      ISSN = {0036-1429},
   MRCLASS = {65N30 (65N12 76D07)},
  MRNUMBER = {4331937}
}

@book{BBF:book:2013,
    AUTHOR = {Boffi, Daniele and Brezzi, Franco and Fortin, Michel},
     TITLE = {Mixed finite element methods and applications},
    SERIES = {Springer Series in Computational Mathematics},
    VOLUME = {44},
 PUBLISHER = {Springer, Heidelberg},
      YEAR = {2013},
     PAGES = {xiv+685},
      ISBN = {978-3-642-36518-8; 978-3-642-36519-5},
   MRCLASS = {65-02 (65M60 65N30)},
  MRNUMBER = {3097958},
MRREVIEWER = {Beny Neta}
}

@article{LLMS2017,
AUTHOR = {Lederer, Philip L. and Linke, Alexander and Merdon, Christian
              and Sch\"oberl, Joachim},
     TITLE = {Divergence-free reconstruction operators for pressure-robust
              {S}tokes discretizations with continuous pressure finite
              elements},
   JOURNAL = {SIAM J. Numer. Anal.},
  FJOURNAL = {SIAM Journal on Numerical Analysis},
    VOLUME = {55},
      YEAR = {2017},
    NUMBER = {3},
     PAGES = {1291--1314},
      ISSN = {0036-1429},
   MRCLASS = {65N30 (65N12 76D05 76D07 76M10)},
  MRNUMBER = {3656505}
}

@article {GJN21,
    AUTHOR = {Garc\'{\i}a-Archilla, Bosco and John, Volker and Novo, Julia},
     TITLE = {On the convergence order of the finite element error in the
              kinetic energy for high {R}eynolds number incompressible
              flows},
   JOURNAL = {Comput. Methods Appl. Mech. Engrg.},
  FJOURNAL = {Computer Methods in Applied Mechanics and Engineering},
    VOLUME = {385},
      YEAR = {2021},
     PAGES = {Paper No. 114032, 54},
      ISSN = {0045-7825},
   MRCLASS = {65M60 (76D05)},
  MRNUMBER = {4290202}
}

@book {Joh16,
    AUTHOR = {John, Volker},
     TITLE = {Finite element methods for incompressible flow problems},
    SERIES = {Springer Series in Computational Mathematics},
    VOLUME = {51},
 PUBLISHER = {Springer, Cham},
      YEAR = {2016},
     PAGES = {xiii+812},
      ISBN = {978-3-319-45749-9; 978-3-319-45750-5},
   MRCLASS = {65-02 (65M60 65N30 76-02 76M10)},
  MRNUMBER = {3561143}
}

@article{ABCM2002,
AUTHOR = {Arnold, Douglas N. and Brezzi, Franco and Cockburn, Bernardo and Marini, L. Donatella},
TITLE = {Unified Analysis of Discontinuous {Galerkin} Methods for Elliptic Problems},
JOURNAL = {SIAM J. Numer. Anal.},
FJOURNAL={SIAM Journal on Numerical Analysis},
VOLUME = {39},
NUMBER = {5},
PAGES = {1749--1779},
YEAR = {2002}
}

@article{SV1985,
     author = {Scott, L. R. and Vogelius, M.},
     title = {Norm estimates for a maximal right inverse of the divergence operator in spaces of piecewise polynomials},
     journal = {ESAIM Math. Model. Numer. Anal. - Mod\'elisation Math\'ematique et Analyse Num\'erique},
     Fjournal = {ESAIM Mathematical Modelling and Numerical Analysis - Mod\'elisation Math\'ematique et Analyse Num\'erique},
     pages = {111--143},
     publisher = {AFCET - Gauthier-Villars},
     address = {Paris},
     volume = {19},
     number = {1},
     year = {1985},
     zbl = {0608.65013},
     mrnumber = {813691},
     language = {en},
}

@article{Arnold1992,
TITLE={{Quadratic} velocity/linear pressure {Stokes} elements},
AUTHOR={Arnold, D. N. and Qin, J.},
JOURNAL={Advances in Computer
Methods for Partial Differential Equations-VII, R. Vichnevetsky, D. Knight \& G. Richter, eds., IMACS, New Brunswick, NJ},
PAGES={28--34},
YEAR={1992},
}

@article{JLMR2022,
author = {John, Volker and Li, Xu and Merdon, Christian and Rui, Hongxing},
title = {Inf-sup stabilized {S}cott--{V}ogelius pairs on general shape-regular simplicial grids by {R}aviart--{T}homas enrichment},
journal = {Math. Models Methods Appl. Sci.},
volume = {34},
number = {05},
pages = {919-949},
year = {2024},
}

@article{SGFSpardiso,
title = {{PARDISO}: a high-performance serial and parallel sparse linear solver in semiconductor device simulation},
journal = {Future Gener. Comput. Syst.},
volume = {18},
number = {1},
pages = {69-78},
year = {2001},
issn = {0167-739X},
author = {Olaf Schenk and Klaus G\"artner and Wolfgang Fichtner and Andreas Stricker},
}

@article{Wang2009,
author = {Wang, Junping and Wang, Yanqiu and Ye, Xiu},
title = {A Robust Numerical Method for {S}tokes Equations Based on Divergence-Free {H}(div) Finite Element Methods},
journal = {SIAM J. Sci. Comput.},
volume = {31},
number = {4},
pages = {2784-2802},
year = {2009},
}

@article{Robert2002,
author = {Scheichl, Robert},
title = {Decoupling Three-Dimensional Mixed Problems Using Divergence-Free Finite Elements},
journal = {SIAM J. Sci. Comput.},
volume = {23},
number = {5},
pages = {1752-1776},
year = {2002}
}

@article{GS2003_jcp,
title = {A new class of truly consistent splitting schemes for incompressible flows},
journal = {J. Comput. Phys.},
volume = {192},
number = {1},
pages = {262-276},
year = {2003},
issn = {0021-9991},
author = {J.L. Guermond and Jie Shen},
}

@article{GMS2006,
title = {An overview of projection methods for incompressible flows},
journal = {Comput. Methods Appl. Mech. Engrg.},
volume = {195},
number = {44},
pages = {6011-6045},
year = {2006},
issn = {0045-7825},
author = {J.L. Guermond and P. Minev and Jie Shen},
}

@article{HS2023,
author = {Huang, Fukeng and Shen, Jie},
title = {Stability and Error Analysis of a Second-Order Consistent Splitting Scheme for the {N}avier--{S}tokes Equations},
journal = {SIAM J. Numer. Anal.},
volume = {61},
number = {5},
pages = {2408-2433},
year = {2023}
}

@article{LLP2007,
author = {Liu, Jian-Guo and Liu, Jie and Pego, Robert L.},
title = {Stability and convergence of efficient {N}avier--{S}tokes solvers via a commutator estimate},
journal = {Communications on Pure and Applied Mathematics},
volume = {60},
number = {10},
pages = {1443-1487},
year = {2007}
}

@article{GS2003SINUM,
author = {Guermond, J. L. and Shen, Jie},
title = {Velocity-Correction Projection Methods for Incompressible Flows},
journal = {SIAM J. Numer. Anal.},
volume = {41},
number = {1},
pages = {112-134},
year = {2003},
}

@article{Shen1992,
author = {Shen, Jie},
title = {On Error Estimates of Projection Methods for {N}avier--{S}tokes Equations: First-Order Schemes},
journal = {SIAM J. Numer. Anal.},
volume = {29},
number = {1},
pages = {57-77},
year = {1992},
}

@article{JLM2024,
title = {Pressure-robust {$L^2(\Omega)$} error analysis for {R}aviart-{T}homas enriched {S}cott-{V}ogelius pairs},
journal = {Applied Mathematics Letters},
volume = {156},
pages = {109138},
year = {2024},
author = {Volker John and Xu Li and Christian Merdon},
}

@article{DLM2017,
title = {A conservative, second order, unconditionally stable artificial compression method},
journal = {Comput. Methods Appl. Mech. Engrg.},
volume = {325},
pages = {733-747},
year = {2017},
issn = {0045-7825},
author = {Victor DeCaria and William Layton and Michael McLaughlin},
}

@article{GP2019,
author = {Guermond, Jean-Luc and Minev, Peter},
title = {High-Order Adaptive Time Stepping for the Incompressible {N}avier--{S}tokes Equations},
journal = {SIAM J. Sci. Comput.},
volume = {41},
number = {2},
pages = {A770-A788},
year = {2019},
}

@article{GP2015,
author = {Guermond, Jean-Luc and Minev, Peter},
title = {High-Order Time Stepping for the Incompressible {N}avier--{S}tokes Equations},
journal = {SIAM J. Sci. Comput.},
volume = {37},
number = {6},
pages = {A2656-A2681},
year = {2015},
}

@article{ExtendableFEMjl,
  author  = {Merdon, Christian Merdon and Jaap, Patrick, et al},
  title   = {{ExtendableFEM.jl}},
  year         = {2025},
  howpublished = {\url{https://github.com/WIAS-PDELib/ExtendableFEM.jl}},
  note         = {Julia package, version 1.3.0},
}

@article{Pardisojl,
  author  = {Carlsson, Kristoffer, et al},
  title   = {{Pardiso.jl}},
  year         = {2025},
  howpublished = {\url{https://github.com/JuliaSparse/Pardiso.jl}},
  note         = {Julia package, version 1.0.1},
}

@article{LS2025,
  author  = {Xu Li and Jie Shen},
  title   = {Unconditionally stable consistent splitting schemes for the {N}avier--{S}tokes equations with $C^0$ finite elements},
  journal = {Math. Comp.},
  year    = {2025},
  note    = {Published electronically}
}

@article {AJLM24,
    AUTHOR = {Ahmed, Naveed and John, Volker and Li, Xu and Merdon,
              Christian},
     TITLE = {Inf-sup stabilized {S}cott-{V}ogelius pairs on general
              shape-regular simplicial grids for {N}avier-{S}tokes
              equations},
   JOURNAL = {Comput. Math. Appl.},
  FJOURNAL = {Computers \& Mathematics with Applications. An International
              Journal},
    VOLUME = {168},
      YEAR = {2024},
     PAGES = {148--161},
      ISSN = {0898-1221,1873-7668},
   MRCLASS = {65M60 (35Q30 76D05)},
  MRNUMBER = {4758236}
}

@article{FU2019,
title = {An explicit divergence-free {DG} method for incompressible flow},
journal = {Comput. Methods Appl. Mech. Engrg.},
volume = {345},
pages = {502-517},
year = {2019},
issn = {0045-7825},
author = {Guosheng Fu},
}

@article{MU2018,
title = {A discrete divergence free weak {G}alerkin finite element method for the {S}tokes equations},
journal = {Appl. Numer. Math.},
volume = {125},
pages = {172-182},
year = {2018},
issn = {0168-9274},
author = {Lin Mu and Junping Wang and Xiu Ye and Shangyou Zhang},
}

@article{YeHall1997,
  author       = {X. Ye and C. A. Hall},
  title        = {A Discrete Divergence-Free Basis for Finite Element Methods},
  journal      = {Numer. Algo.},
  year         = {1997},
  volume       = {16},
  pages        = {365--380},
}

@article{Arnold1997,
author = {Douglas N. Arnold and Richard S. Falk and R. Winther},
title = {Preconditioning in {H(div)} and Applications},
journal = {Math. Comp.},
year = {1997},
volume = {66},
number = {219},
pages = {957--984},
}

\end{document}